\documentclass[reqno,english]{amsart}
\usepackage[utf8]{inputenc}
\usepackage{amsmath,amssymb,amsthm,mathrsfs,color,times,textcomp,yfonts,mathtools,cases}
\usepackage[left=0.8in, right=0.8in, top=0.8in, bottom=0.8in]{geometry}

\newcommand{\Sph}{\mathbb S}

\allowdisplaybreaks[4]

\usepackage{bm}

\usepackage[T1]{fontenc} %加粗

\usepackage{subcaption}
\usepackage[normalem]{ulem}
\usepackage[export]{adjustbox}
\usepackage{esint}
\usepackage{xcolor}
\usepackage{array}
\usepackage[colorlinks=true]{hyperref}
\hypersetup{urlcolor=blue, citecolor=blue, linkcolor=red}

\hypersetup{
colorlinks=true,
linkcolor=red
}
\usepackage{indentfirst}
\usepackage{graphicx}
\usepackage{float}
\numberwithin{equation}{section}
\newtheorem{theorem}{Theorem}[section]
\newtheorem{lemma}[theorem]{Lemma}
\newtheorem{corollary}[theorem]{Corollary}
\newtheorem{proposition}[theorem]{Proposition}

\newtheorem{remark}{Remark}[section]

\newcommand{\Rom}[1]{%
\textup{\uppercase\expandafter{\romannumeral#1}}%
}

\newcommand*{\ud}{d}
\newcommand{\R}{\mathbb R}
\newcommand{\Id}{\operatorname{Id}}

\usepackage{mathtools}
\usepackage{thmtools}
\declaretheoremstyle[headfont=\normalfont]{normalhead}
\usepackage{color}
\usepackage{morefloats}
\usepackage{textcomp}
\usepackage{url}
\title[Axially Symmetric Rigidity on $\Sph^N$]
{Axially Symmetric Rigidity for a $Q$-Curvature-Type Equation on $\Sph^N$}

\author{Changfeng Gui}
\address{Department of Mathematics, University of Macau, Taipa, Macau}
\email{changfenggui@um.edu.mo}

\author{Tuoxin Li}
\address{Department of Mathematics \\  The Chinese University of Hong Kong \\ Shatin \\ NT \\ Hong Kong}
\email{txli@math.cuhk.edu.hk}

\author{Juncheng Wei}
\address{Department of Mathematics \\  The Chinese University of Hong Kong \\ Shatin \\ NT \\ Hong Kong}
\email{wei@math.cuhk.edu.hk}

\author{Zikai Ye}
\address{Department of Mathematics \\  The Chinese University of Hong Kong \\ Shatin \\ NT \\ Hong Kong}
\email{zkye@math.cuhk.edu.hk}
\begin{document}

\subjclass{35B07, 35J15, 35J35, 35J60, 53A05, 53C18, 53C21} 

\keywords{Q-curvature-type equation, Beckner's inequality, rigidity, Gegenbauer polynomials, GJMS operator,}

\begin{abstract}
We prove that for every even integer $N\geq 4$ or $N\in\{3,5,7\}$, every axially symmetric solution to the $Q$-curvature-type problem
$$ \alpha P_N u + (N-1)!(1-\frac{e^{Nu}}{\int_{\mathbb{S}^N} e^{Nu}dw})=0 \ \ \ \ \ \mbox{on} \ \mathbb{S}^N $$
is constant, provided that $ \alpha\ge\frac{1}{2}$ and $\alpha \not =1$. The proof starts from a Gegenbauer expansion and its weighted $\ell ^2$ estimate on coefficients. Then one key estimate is the refined estimate of the seminorm $\lfloor G\rfloor^2$ using a specially designed integration-by-parts identity for general even integer $N\geq 4$. For $N\in\{3,5,7\}$, the seminorm estimates  are established via a descent to a three--dimensional nonlocal difference quotient identity. 
\end{abstract}

\maketitle
\hypersetup{linkcolor=black}
\tableofcontents

\section{Introduction and main results}

\subsection{A Q-curvature-type equation}

For $\alpha>0$, we consider the $Q$-curvature-type equation on $\mathbb{S}^N$:
\begin{equation}\label{paneitz}
\alpha P_N u+(N-1)!(1-\frac{e^{Nu}}{\int_{\mathbb{S}^N}e^{Nu}dw})=0,
\end{equation}
where $dw$ denotes the normalized surface measure on $\mathbb{S}^N$ so that $\int_{\mathbb{S}^N}dw=1$. Here
\begin{equation*}
\begin{aligned}
    P_N:=
    \begin{cases}
        \prod_{k=0}^{\frac{N-2}{2}}(-\Delta+k(N-k-1)),&\text{ for }N\text{ even}, \\
        \left(-\Delta+(\frac{N-1}{2})^2\right)^{\frac{1}{2}}\prod_{k=0}^{\frac{N-3}{2}}(-\Delta+k(N-k-1)),&\text{ for }N\text{ odd}
    \end{cases}
\end{aligned}
\end{equation*}
represents the Paneitz operator on $\mathbb{S}^N$ introduced by Paneitz in \cite{Paneitz2008}.

The Paneitz operator $P_N$ belongs to the family of conformally invariant operators $P_{N,g,k}$ called the GJMS operator, constructed by Graham, Jenne, Mason, and Sparling on a Riemannian manifold $(M^N, g)$ in \cite{GJMS1992}. When $k=\frac{N}{2}$, its covariance law is
\begin{equation*}
P_{N,\tilde g,}(\phi)=e^{-N\omega}P_{N,g}(\phi),\qquad \text{ when }\tilde g=e^{2\omega}g
\end{equation*}
We refer to \cite{CLY2019, CM2023, ChangYang1995, ChangYang1997, DHL2000,DM2008, FG2013, GHX2021, GurMal2015, LiXiong2019, Mal2006, WX1998} and the references therein for results and background on $Q$-curvature problems and GJMS operators.  

Equation \eqref{paneitz} has a variational structure. In fact, it is the Euler-Lagrange equation of the functional
\begin{equation}\label{eq:Beckner-functional}
 \mathcal J_{\alpha,N}(u)
 :=\frac{\alpha}{2}\int_{\Sph^N}uP_Nu\,\ud w
 +(N-1)!\int_{\Sph^N}u\,\ud w
 -\frac{(N-1)!}{N}
  \log\!\left(\int_{\Sph^N}e^{Nu}\,\ud w\right).
\end{equation}
The well-known Beckner's inequality \cite{Beckner1993} states that $\inf \mathcal{J}_{1,N}(u)=0$.
In addition, A. Chang and P. Yang \cite{ChangYang1995} proved that, if $u$ belongs to the zero center-of-mass set
\begin{equation*}
\mathcal{L}_{N}=\left\{u\in H^{\frac{N}{2}}(\mathbb{S}^N)\ :\ \int_{\mathbb{S}^N}e^{Nu} x_j dw=0,\ j=1,\cdots, N+1 \right \},
\end{equation*}
then for any $\alpha\geq \frac{1}{2}$, there exists a constant $C(\alpha,N)\geq0$ such that 
\begin{equation*}
\mathcal{J}_{\alpha,N}(u)\geq -C(\alpha,N)    
\end{equation*}
for any $u\in \mathcal{L}_{N}$. This motivates the study of the constrained Euler--Lagrange equation of $\mathcal{J}_{\alpha,N}$ in the constraint set $\mathcal{L}_N$.

A critical point of $\mathcal{J}_{\alpha,N}$ under the zero center-of-mass constraint satisfies the following $Q$-curvature-type equation on $\mathbb{S}^N$:
\begin{equation*}
\alpha P_N u+(N-1)!(1-\frac{e^{Nu}}{\int_{\mathbb{S}^N}e^{Nu}dw})=\sum_{i=1}^{N+1}a_i x_i e^{Nu} \ \mbox{on} \ \mathbb{S}^N
\end{equation*}
for some constants $a_i$, $i=1,\dots, N+1$. Interestingly, for $\alpha\neq1$, by applying the Kazdan–Warner identity, one can show that all Lagrange multipliers $a_i$ vanish. 
The converse is also true. In fact, by \cite[Proposition 1.5]{GHW2022}, we have 
\begin{equation}\label{u centre}
\int_{\mathbb{S}^N} u x_jdw=\int_{\mathbb{S}^N}  e^{Nu}x_jdw=0, \quad j=1,\cdots, N+1.    
\end{equation}
We refer to Chang-Yang \cite{ChangYang1995} and Wei-Xu \cite{WX1998} for more details. 

\medskip

On $\mathbb{S}^2$, \eqref{paneitz} is known as the Nirenberg problem:
\begin{equation*}
-\alpha \Delta u + 1-  \frac{e^{2u}}{\int_{\mathbb{S}^2} e^{2u}}=0 \ \ \mbox{on} \ \mathbb{S}^2.
\end{equation*}
This problem has been extensively studied over the past four decades. See \cite{ChangYang1987, ChangYang1988, JinLiXiong2017} and the references therein. For $\alpha<1$, Feldman, Froese, Ghoussoub, and the first author  \cite{FFGG1998} first established the rigidity for axially symmetric functions when $\alpha >\frac{16}{25}-\epsilon$. Here we say a function $u$ on $\mathbb{S}^2$ is axially symmetric if, up to a rotation, $u=u(x)$ for $x=x_{1}\in (-1,1)$. The first and the third authors \cite{GW2000} subsequently proved the sharp axially symmetric version.
%Later, Ghoussoub and Lin \cite{GL2010} showed that all critical points of $J_{\alpha,2}$ are axially symmetric and hence the conjecture holds true for $\frac{2}{3}-\epsilon<\alpha<1$. 
Finally, the first author and Moradifam \cite{GM2018} established the sphere covering inequality to prove that every solution is  axially symmetric if $\alpha\ge \frac{1}{2}$, thereby completing the classification on $\mathbb{S}^2$. For more result in this direction, we refre readers to \cite{ChangGui202, ChangHang2022, GHM2020, SSTW2019}
%Furthermore, Shi, Sun, Tian and Wei \cite{SSTW2019} showed that all even solutions are axially symmetric for $\frac{1}{4}\leq \alpha<1$. For more general results on improved Moser-Trudinger-Onofri inequalities on $\mathbb{S}^2$ and their connections with the Szeg\"o limit theorem, see \cite{ChangGui202, ChangHang2022}.

For the higher-dimensional equation \eqref{paneitz} on $\mathbb{S}^N$, as mentioned above, the third author and Xu proved the rigidity when $\alpha<1$ is close to $1$. Since direct treatment of \eqref{paneitz} seems difficult, in view of the work of Ghoussoub-Lin \cite{GL2010} and  the work of the first author and Moradifam \cite{GM2018}, it is natural to consider axially symmetric functions as a first step in higher dimensions. Several results have been obtained for axially symmetric solutions with $N=4,6,8$. We refer readers to \cite{GHX2021,GHW2022,LWY2022,GLWY2025}.
\medskip

For odd $N$, there are few results on solutions of \eqref{paneitz}.  When $N=1$, using the Szeg\"o limit theorem (see \cite{Szego1975}), Widom \cite{Widom1988} improved the best constant of Beckner's inequality under extra orthogonality conditions. Consequently, for $ \alpha \geq \frac{1}{m+1}$, \eqref{paneitz} has only trivial solutions if $u\in H^\frac{1}{2}(\mathbb S^1)$ satisfies $\int_{\mathbb S^1}u\ud \theta=0$ and $\int_{\mathbb S^1}e^{u}e^{ik\theta}\ud \theta=0$, for $k=1,\dots,m$.

For odd $N\geq 3$, as mentioned before, Wei-Xu \cite{WX1998} proved the rigidity when $\alpha$ is close to $1$. Recently, Zhang \cite{Zhang2025} proved the rigidity for $\alpha>1$. To the best of our knowledge, there are no other results. One of the difficulties is the non-locality of the GJMS operator $P_N$ when $N$ is odd.

In this paper, we focus on axially symmetric solutions on $\mathbb{S}^N$. We prove the rigidity in the axially symmetric case for every even integer $N\geq 4$ and $N=3,5,7$.

As derived in \cite{GHW2022}, for even $N$ and an axially symmetric function $u=u(x)$, equation \eqref{paneitz} reduces to
\begin{equation}\label{axial}
\alpha(-1)^{\frac{N}{2}}[(1-x^2)^{\frac{N}{2}}u']^{(N-1)}+(N-1)!-\frac{(N-1)!\sqrt{\pi}\Gamma(\frac{N}{2})}{\Gamma(\frac{N+1}{2})\gamma}e^{Nu}=0,\ x\in(-1,1),
\end{equation}
where
\begin{equation*}
\gamma=\int_{-1}^{1}(1-x^2)^\frac{N-2}{2} e^{Nu} \ud x.
\end{equation*}
%The constraint set becomes
%\begin{equation}\label{AxialLrN}
%\mathcal{L}_{r,N}=\left\{u\in H^{\frac{N}{2}}(\mathbb{S}^N):\ u=u(x)\text{ and }\int_{-1}^{1}x(1-x^2)^{\frac{N-2}{2}}e^{Nu} d x=0\right\},
%\end{equation}

Our first main result establishes rigidity for axially symmetric functions for even dimensions $N\geq 4$.

\begin{theorem}\label{main}
Let $N\geq 4$ be an even integer. If $\alpha\ge \frac{1}{2}$ and $\alpha\neq 1$, then all solutions of \eqref{axial}  are constant functions.
\end{theorem}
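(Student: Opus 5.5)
Let $G:=(1-x^2)u'$ and expand $u$ in Gegenbauer polynomials $C_k^{\lambda}$ with $\lambda=\frac{N-1}{2}$, the eigenfunctions of the axially symmetric Laplacian on $\Sph^N$. This diagonalizes $P_N$: on the $k$-th mode it acts by the eigenvalue $\frac{\Gamma(N+k)}{\Gamma(k)}$. Integrating \eqref{axial} once and using the weight $(1-x^2)^{\frac{N-2}{2}}$ expresses $G$ as a series. The main steps, in order, are as follows.

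\textbf{Step 1: preliminary identities.} Using \eqref{u centre}, the first mode $k=1$ vanishes; this is the Kazdan--Warner information, and it is where $\alpha\neq1$ enters. Multiplying \eqref{axial} by suitable test functions gives two relations. Testing against $1$ and $x$ gives mass and center-of-mass identities. Testing against $G$ itself, i.e. a Pohozaev-type identity, gives
\[
\alpha\sum_{k\ge2}\lambda_k a_k^2\,\|C_k^\lambda\|^2 \;=\;(N-1)!\int_{-1}^1 (1-x^2)^{\frac{N}{2}}(u')^2\cdots,
\]
so the weighted $\ell^2$ norm of the coefficients is controlled by the seminorm $\lfloor G\rfloor^2$. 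Here $\lfloor G\rfloor^2$ denotes the natural Dirichlet-type quadratic form $\int (1-x^2)^{\frac{N-2}{2}}\bigl((1-x^2)G'^2-c_N G^2\bigr)dx$ that arises when the exponential nonlinearity is differentiated.

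\textbf{Step 2: coercivity comparison.} For a nonconstant solution, the expansion yields a lower bound
\[
\lfloor G\rfloor^2\ \ge\ \mu_2\sum_{k\ge2}(\cdots),
\]
which uses that modes $k\ge2$ dominate. Combined with $\alpha\ge\frac12$, this will contradict Step 3 unless $G\equiv0$, in which case $u$ is constant. This mirrors the $N=4,6,8$ arguments of \cite{GHW2022,LWY2022,GLWY2025}, and I will organize it so that the $N$-dependence is explicit.

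\textbf{Step 3: the refined seminorm estimate (main obstacle).} Here I need an upper bound on $\lfloor G\rfloor^2$ in terms of $\int(1-x^2)^{\frac{N-2}{2}}G^2e^{Nu}$-type quantities, with a constant sharp enough to close at $\alpha=\frac12$ for every even $N$. The plan is to differentiate \eqref{axial}: set $v=e^{Nu}$ and write $[(1-x^2)^{N/2}u']^{(N-1)}$ as a product of first-order operators $\prod_j(\frac{d}{dx}(1-x^2)+c_j x)$. Then I will look for an integration-by-parts identity in which multiplying by $(1-x^2)^{m}G^{(j)}$ and summing with well-chosen coefficients produces nonnegative boundary-free terms and a telescoping structure. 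The goal is an identity of the form
\[
\alpha\,\lfloor G\rfloor^2 + \sum_j \beta_j\!\int(1-x^2)^{j+\frac{N-2}{2}}(G^{(j)})^2=\tfrac{(N-1)!}{N}\!\int(\dots)G^2 ,
\]
with all $\beta_j\ge0$ when $\alpha\ge\frac12$.

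I would first try to guess the coefficients $\beta_j$ from the known $N=4,6$ computations. I would then verify nonnegativity through the Gegenbauer recurrences, so that the whole identity is checked mode by mode on the coefficients $a_k$; this reduces the problem to a polynomial inequality in $k$ and $N$, namely checking that a rational function of $k$ is nonnegative for $k\ge2$. Proving that this inequality holds uniformly for all even $N$, and not merely case by case, is where I expect the real difficulty, and I would attack it by induction on $N/2$ using the factorized form of $P_N$.

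Once Step 3 is established, combining it with Steps 1 and 2 forces all $a_k=0$ for $k\ge2$. Since $a_1=0$ already, $u$ is constant.
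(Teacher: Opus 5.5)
There is a genuine gap; in fact two essential ingredients are missing.

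\textbf{The lowest mode of $G$.} You have misplaced the obstruction. Condition \eqref{u centre} kills the degree-one mode of $u$. By \eqref{recurence2}, this is the same as the constant coefficient $a_0$ of $G=(1-x^2)u'$. It says nothing about the degree-two mode of $u$, i.e.\ about $\beta$, the coefficient of $F_1=x$ in $G$. Proving $\beta=0$ is the entire content of the theorem, since once $\beta=0$, \eqref{D(N-2/2)G} forces $G\equiv0$. Your Step 2 (``modes $k\ge2$ dominate'') assumes this mode away.

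In the paper, $\beta$ enters through two exact identities: $a=\int_{-1}^1(1-x^2)g\,dx=\frac{N}{N+1}(1-\alpha\beta)$, and \eqref{D(N-2/2)G}. The latter is linear in $\beta$, while the $k=1$ contributions to the quadratic forms are of order $\beta^2$. Hence the seminorm inequality together with $D_N\ge0$ only yields a lower bound $\alpha\beta\ge Z_N(\alpha)\ge Z_N(\tfrac12)$, i.e.\ $a\le\frac{3N^3}{2(2N^3+11N^2-2N+16)}$. The contradiction requires an independent bound in the other direction, namely $a\ge\frac{N-2}{N-1}$. This comes from the weighted $\ell^2$ estimate of Theorem \ref{bk}, which rests on the product formula of Lemma \ref{prodform} and the kernel bound \eqref{KNxy}. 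Nothing in your plan plays this role. Without it, a lower bound on $\alpha\beta$ is perfectly consistent with the trivial constraint $0<\alpha\beta<1$.

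\textbf{The cubic term.} Step 3 lacks the mechanism that makes the seminorm estimate possible. The seminorm \eqref{Gfloor def} is not a first-order Dirichlet form: it has order $N$, with spectral weight $\lambda_k\Gamma(k+N-1)/\Gamma(k+1)$. Testing \eqref{Gequation} against $G$ gives \eqref{Gfloor I}, and the difficulty is the cubic term $NI_N$. A cubic term couples all modes, so it cannot be checked ``mode by mode'' as a rational inequality in $k$. Nor does an identity acquire nonnegative coefficients merely because $\alpha\ge\frac12$; the parameter $\alpha$ has to enter through one-sided pointwise information.

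In the paper that information is the sign of $e^{Nu}$. By \eqref{G}, $\alpha\hat G_M\le(N-1)!$, and a maximum-principle descent through the recursion defining $\hat G_j$ gives $\hat G_j\le\frac{(j+1)!(N-1)!}{(N-j-1)!\,\alpha}$ (Proposition \ref{Gfloor est}). These bounds are usable only because Proposition \ref{newIBP} rewrites $I_N$ as $\sum_{r,d}C_{r,d}\int_{-1}^1(1-x^2)^{M+r}(G^{(r)})^2\hat G_{M-r-d}\,dx$ with every $C_{r,d}>0$. Replacing each $\hat G_j$ by its upper bound turns the cubic term into the quadratic form $\frac1\alpha\sum_k R_{N,k}b_k^2$. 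Only at that point is there a genuinely mode-by-mode statement to prove, namely $R_{N,k}\le c_N\frac{\Gamma(k+N-1)}{\Gamma(k+1)}$ for $k\ge2$ (Proposition \ref{RNkest}).

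Guessing the coefficients from $N=4,6$ and inducting on $N/2$ is precisely what did not generalize in earlier work. The admissible coefficients are not unique, and not every choice gives a strong enough estimate. The paper instead obtains the identity for all even $N$ from the symmetry properties of the trilinear forms $\mathcal H_A$, $\mathcal H_B$ in Appendix \ref{NewIBPproof}. Your proposal supplies neither the identity nor the pointwise bounds.
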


\medskip

For odd $N$ and an axially symmetric function $u=u(x)$, the GJMS operator $P_N$ can be written as
\begin{equation}\label{def P rN}
P_{r,N}=\left(-(1-x^2)\frac{d^2}{dx^2}+Nx\frac{d}{dx}+(\frac{N-1}{2})^2\right)^{\frac{1}{2}}\prod_{k=0}^{\frac{N-3}{2}}\left(-(1-x^2)\frac{d^2}{dx^2}+Nx\frac{d}{dx}+k(N-k-1)\right).
\end{equation}
In this case, equation \eqref{paneitz} reduces to
\begin{equation}\label{axialodd}
\alpha P_{r,N}u+(N-1)!-\frac{(N-1)!\sqrt{\pi}\Gamma(\frac{N}{2})}{\Gamma(\frac{N+1}{2})\gamma}e^{Nu}=0,\ x\in(-1,1).
\end{equation}

For $N=3,5,7$, we have the following result.
\begin{theorem}\label{mainodd}
Let $N\in\{3,5,7\}$. If $\alpha\ge \frac{1}{2}$ and $\alpha\neq 1$, then all solutions of \eqref{axialodd}  are constant functions.
\end{theorem}

\begin{remark}
    When $\alpha=1$, there is a family of solutions $u(x)=-\log(1-ax)$ to \eqref{paneitz} for any $a\in(-1,1)$. Then we see that \eqref{u centre} does not hold unless $a=0$, and this is why we have to exclude $\alpha=1$ in our theorems. In addition, this family of solutions are also optimizers of Beckner's inequality.
\end{remark}

\medskip

\subsection{Main ideas of the proof}
As mentioned before, Gui-Hu-Xie \cite{GHX2021, GHW2022} partially proved Theorem \ref{main} for $N=4 ,6,8$ and a non-optimal lower bound of $\alpha$  via a strategy similar to that in \cite{GW2000}. Li-Wei-Ye \cite{LWY2022} and Gui-Li-Wei-Ye \cite{GLWY2025} closed the gaps and reached the best constant $\alpha=\frac{1}{2}$ for $N=4 ,6,8$. In all of those works, an auxiliary function $G=(1-x^2)u'$ is expanded  in terms of Gegenbauer polynomials,  and  a quantity $D$ is introduced in relation to the Gegenbauer coefficients and the estimate of a seminorm $\lfloor G\rfloor^2$ (see \eqref{Gfloor def}). In particular, \cite{GLWY2025} observed a new way of integration-by-parts for
\begin{equation*}
I_N=(-1)^{\frac{N}{2}}\int_{-1}^{1}(1-x^2)^{\frac{N-2}{2}}G^2[(1-x^2)^{\frac{N-2}{2}}G]^{(N-1)} dx
\end{equation*}
with $N=6,8$ to estimate $\lfloor G\rfloor^2$. 

However, the integration-by-parts identities in the previous papers were constructed separately in dimension $6$ and $8$. No dimension-independent strategy was found. For general $N$, our first objective is to write $I_N$ in the form 
\begin{equation*}
    I_N=\sum_{r=1}^{\frac{N}{2}-1}\sum_{d=0}^{\frac{N}{2}-1-r}C_{r,d}\int_{-1}^{1}(1-x^2)^{\frac{N}{2}-1+r}(G^{(r)})^2\hat G_{\frac{N}{2}-1-r-d} dx,
\end{equation*}
where $\hat{G}_j$'s is a family of specially-designed auxiliary function that admits pointwise upper bound derived in Proposition \ref{Gfloor est}. Dropping $\hat{G}_j$ with the pointwise upper bound yields refined estimates of $I_N$, hence the seminorm $\lfloor G\rfloor^2$.

However, as $N$ grows, $I_N$ contains derivatives of increasingly higher orders, leading to higher complexity of the integration-by-parts. Furthermore, the choice of $C_{r,d}$ is observed to be non-unique. We need to choose suitable $C_{r,d}$ so that the corresponding seminorm estimate is strong enough. These difficulties are overcome in this paper by a series of carefully chosen $C_{r,d}$ and a detailed operator algebra computation of the integration by parts procedure. This is a key ingredient of the paper.

On the other hand, previous papers \cite{GHX2021, GHW2022, GLWY2025, GW2000, LWY2022} used the lower bound of $D_N$ to generate a series of inequalities and proved $a=\frac{N}{N+1}(1-\alpha\beta)\leq \frac{d_0}{\lambda_n}$ for some $d_0$ and $\lambda_n=O(n^2)$ by induction. Here $\beta$ is the first Gegenbauer coefficient of $G$. With a weighted $\ell^{\infty}$ estimate of the Gegenbauer coefficients of $G$, the rigidity is established.

Nevertheless, the estimate of $a$ and the weighted $\ell^{\infty}$ estimates of the Gegenbauer coefficients are unnecessarily strong and hard to prove even when $N=8$. Furthermore, these estimates rely heavily on a priori numerical observations. Hence, this dimension-dependent strategy does not extend to general even $N$.

To establish an estimate for general even $N$, we resort to another key ingredient, namely a weighted $\ell^2$ estimate of $G$'s Gegenbauer coefficients proved in \cite{GLWY2026Beckner}, to show that $a\geq \frac{N-2}{N-1}$. This enables us to avoid the inductive proof of $a\leq \frac{d_0}{\lambda_n}$, which is one of the most difficult parts in the previous strategy. Hence, this argument avoids pointwise estimates of Gegenbauer polynomials required in the previous induction procedure. The rigidity will be established with the seminorm estimate and the corresponding auxiliary nonnegative quantity $D_N$.

For odd $N$, the operator $P_{r,N}$ is nonlocal. Thus, the local integration-by-parts computation of the seminorm is unavailable to estimate
\begin{equation*}
I_N=\int_{-1}^{1}(1-x^2)^{\frac{N-2}{2}}G^2P_{r,N}u dx.   
\end{equation*}
For $N=3$, $I_3$ can be represented by a trilinear form involving a difference quotient of $G$. For $N=5,7$, we establish the relationship between the corresponding cubic forms and $I_3$. Furthermore, instead of using second-order ODEs, we use an integral representation to derive pointwise estimates of $G'$ as well as $P_{r,N-2j}u$ for $0\leq j\leq \frac{N-3}{2}$. We believe that a similar strategy may work in all odd dimensions $N\ge 9$.

\subsection{Outline of the paper}

This paper is organized as follows. In Section \ref{preliminary}, we gather some properties of Gegenbauer polynomials, expand $G$ in terms of Gegenbauer polynomials, and cite some known results from \cite{GHW2022,GLWY2026Beckner}, including a weighted $\ell^2$ estimate of Gegenbauer coefficients of $G$. Section \ref{seminorm estimates even} presents a refined estimate of the seminorm $\lfloor G\rfloor^2$ by a carefully designed integration-by-parts formula. Section \ref{seminorm estimates odd} establishes the seminorm estimates for $N=3,5,7$ by descent to a three--dimensional nonlocal difference quotient identity. With these key estimates, we prove Theorem \ref{main} for general even $N$ and Theorem \ref{mainodd} for $N=3,5,7$ in Section \ref{ProofMain}. The integration-by-parts formula is proved in Appendix \ref{NewIBPproof}. Appendix \ref{RNkbound} establishes the coefficient bound of $R_{N,k}$. 

\section{Preliminaries}\label{preliminary}
In this section, we collect some properties of Gegenbauer polynomials and some known facts about the equation.

\subsection{Gegenbauer polynomials}

The Gegenbauer polynomial of order $\nu$ and degree $k$ (see \cite{Mori1998}) is given by
\begin{equation}\label{Rodrigues}
C_{k}^{\nu}(x)=\frac{(-1)^k}{2^k k!}\frac{\Gamma(\nu+\frac{1}{2})\Gamma(k+2\nu)}{\Gamma(2\nu)\Gamma(\nu+k+\frac{1}{2})}(1-x^2)^{-\nu+\frac{1}{2}}\frac{d^k}{dx^k} (1-x^2)^{k+\nu-\frac{1}{2}}.
\end{equation}

$C_{k}^{\nu}$ is an even function if $k$ is even and it is odd if $k$ is odd. The derivative of $C_{k}^{\nu}$ satisfies
\begin{equation}\label{201}
\frac{d}{dx}C_{k}^{\nu}(x)=2\nu C_{k-1}^{\nu+1}(x).
\end{equation}

Let $F_k^\nu$ be the normalization of $C_{k}^{\nu}$ such that $F_k^\nu(1)=1$, i.e.
\begin{equation*}%\label{Fknu}
F_k^\nu=\frac{k!\Gamma(2\nu)}{\Gamma(k+2\nu)} C_{k}^{\nu}.
\end{equation*}
Then $F_k^\nu$ satisfies the following differential equation
\begin{equation}\label{DE}
(1-x^2)(F_{k}^\nu)''-(2\nu+1)x(F_k^\nu)'+k(k+2\nu)F_k^\nu=0,
\end{equation}
and \eqref{201} becomes
\begin{align*}%\label{derivative}
(F_{k}^\nu)'=\frac{k(k+2\nu)}{2\nu+1}F_{k-1}^{\nu+1}.
\end{align*}

% It is also useful to introduce the following expressions using hypergeometric functions (see \cite{Olver2010} for instance)
% \begin{align*}%\label{hyper odd}
% F_{2m+1}^\nu(\cos \theta)= {_2}F_1(-m,m+\nu+1;\nu+\frac{1}{2};\sin^2\theta) \cos\theta,
% \end{align*}
% \begin{align*}%\label{hyper even}
% F_{2m}^{\nu}(\cos \theta)= {_2}F_1(-m,m+\nu;\nu+\frac{1}{2};\sin^2\theta),
% \end{align*}
% where we recall the hypergeometric function is defined for $|x|<1$ by power series
% \begin{equation*}
% {_2}F_1(a,b;c;x)=\sum_{k=0}^\infty \frac{(a)_k(b)_k}{(c)_k}\frac{x^k}{k!}.
% \end{equation*}

On $\mathbb{S}^N$, the corresponding Gegenbauer polynomial is $C_{k}^{\frac{N-1}{2}}$. To simplify notation, in what follows we will write $F_k$ for $F_k^\frac{N-1}{2}$, and there should be no danger of confusion.

The function $F_k$ satisfies the orthogonality condition
\begin{equation}\label{Fknorm}
\begin{aligned}
    \int_{-1}^{1}(1-x^2)^{\frac{N-2}{2}}F_{k}F_ldx&=\frac{2^{N-1}\Gamma(\frac{N}{2})^2 \Gamma(k+1)}{(k+N-2)!(2k+N-1)}\delta_{kl},
\end{aligned}    
\end{equation}
where $\lambda_k=k(k+N-1)$. Furthermore, it is an eigenfunction of the axial restriction of the Paneitz operator:
\begin{equation}\label{Paneitzeigen}
P_{r,N} F_k=
\begin{cases}
\frac{\Gamma(k+N)}{\Gamma(k)}F_k,&\qquad k\geq 1,\\
0,&\qquad k=0.
\end{cases}
\end{equation}

Another useful function is $F_k'$. For simplicity, in the rest of the paper, we normalize $\Tilde F_k'$ as 
\begin{equation*}
\Tilde{F}_{k}'=\frac{N}{\lambda_k}F_k'=\frac{k   !\Gamma(N+1)}{\Gamma(k+N-1)\lambda_k}C_{k-1}^{\frac{N+1}{2}}
\end{equation*}
so that $\Tilde{F}_{k}'(1)=1$.

$F_k$ and $\tilde F_k'$ also satisfy the following recurrence formulas
\begin{align}
xF_k=&\frac{k}{2k+N-1}F_{k-1}+\frac{k+N-1}{2k+N-1}F_{k+1}, \label{recurence1}\\
(1-x^2)F_k'=&\frac{k(k+N-1)}{2k+N-1}(F_{k-1}-F_{k+1}) \label{recurence2}\\
F_k=&\frac{(k+N-1)(k+N)}{N(2k+N-1)}\Tilde{F}_{k+1}'-\frac{k(k-1)}{N(2k+N-1)}\Tilde{F}_{k-1}' \label{recurence3}.
\end{align}

% Its generating function is
% \begin{equation}\label{generate}
% \sum_{k=0}^{\infty}\frac{r^k}{\int_{-1}^{1}(1-x^2)^{\frac{N-2}{2}}F_{k}^2 dx}F_k(z)=\frac{\Gamma (\frac{N+1}{2})}{\sqrt{\pi}\Gamma(\frac{N}{2})}\frac{1-r^2}{(1-2rz+r^2)^{\frac{N+1}{2}}}
% \end{equation}
% for $z\in (-1,1)$.

We recall the following formulas about Gegenbauer polynomials, which will be useful in Section \ref{weightedl^2est}. 
\begin{lemma}[\cite{Olver2010}, (18.17.5)]\label{prodform}
For any $k\geq 1$, we have
\begin{equation*}
    \tilde F_k'(x)\tilde F_k'(y)=\frac{\Gamma(\frac{N+2}{2})}{\sqrt{\pi}\Gamma(\frac{N+1}{2})}\int_{-1}^{1}\tilde F_k'\left(xy+t\sqrt{(1-x^2)(1-y^2)}\right)(1-t^2)^{\frac{N-1}{2}}dt.
\end{equation*}
\end{lemma}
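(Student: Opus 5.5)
The plan is to derive the identity from the addition theorem and the mean-value property of zonal spherical harmonics on a higher-dimensional sphere. \emph{Identification.} By definition $\tilde F_k'$ is a constant multiple of $C_{k-1}^{\frac{N+1}{2}}$, normalized so that $\tilde F_k'(1)=1$. Put $\lambda=\frac{N+1}{2}$ and $n=k-1$. Then $\tilde F_k'=C_n^\lambda/C_n^\lambda(1)$ is the normalized Gegenbauer polynomial of index $\lambda$, and the claim is the classical Gegenbauer product formula
\[
\frac{C_n^\lambda(x)}{C_n^\lambda(1)}\frac{C_n^\lambda(y)}{C_n^\lambda(1)}=\frac{\Gamma(\lambda+\frac12)}{\sqrt\pi\,\Gamma(\lambda)}\int_{-1}^1\frac{C_n^\lambda\big(xy+t\sqrt{(1-x^2)(1-y^2)}\big)}{C_n^\lambda(1)}(1-t^2)^{\lambda-1}\,dt .
\]
Here $\lambda-1=\frac{N-1}{2}$ and $\frac{\Gamma(\lambda+\frac12)}{\Gamma(\lambda)}=\frac{\Gamma(\frac{N+2}{2})}{\Gamma(\frac{N+1}{2})}$, which are exactly the exponent and constant in the statement. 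By polynomial identity in $x,y$, it suffices to prove it for $x,y\in(-1,1)$.

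\emph{Geometric setting.} Work on $\Sph^{d}\subset\R^{d+1}$ with $d=2\lambda+1=N+2$. Here the zonal harmonics of degree $n$ are $\zeta\mapsto\Phi(\zeta\cdot\eta)$, where $\Phi=C_n^\lambda/C_n^\lambda(1)$. Fix a pole $e$ and unit vectors $\xi,\eta$ with $\xi\cdot e=x$ and $\eta\cdot e=y$. Write $\zeta=ye+\sqrt{1-y^2}\,\omega$ with $\omega$ ranging over the unit sphere $\Sph^{d-1}=\Sph^{N+1}$ of $e^\perp$. Choose the rotation so that the $e^\perp$-component of $\xi$ points along a fixed unit vector $\omega_0$. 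Then
\[
\xi\cdot\zeta=xy+t\sqrt{(1-x^2)(1-y^2)},\qquad t=\omega\cdot\omega_0 .
\]
Under the uniform probability measure on $\Sph^{N+1}$, the coordinate $t$ has density
\[
\frac{\Gamma(\frac{N+2}{2})}{\sqrt\pi\,\Gamma(\frac{N+1}{2})}(1-t^2)^{\frac{N-1}{2}} ,
\]
the general rule being that on $\Sph^m$ a coordinate has density proportional to $(1-t^2)^{\frac{m-2}{2}}$, and the constant follows from a Beta integral. Hence the right-hand side of the lemma equals the average of $\zeta\mapsto\Phi(\xi\cdot\zeta)$ over the parallel $\{\zeta\in\Sph^d:\zeta\cdot e=y\}$.

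\emph{Mean-value step.} Average $\zeta\mapsto\Phi(\xi\cdot\zeta)$, a spherical harmonic of degree $n$ in $\zeta$, over the stabilizer $SO(d)$ of $e$, acting on $\xi$ rather than on $\zeta$. The result $\zeta\mapsto\int_{SO(d)}\Phi(g\xi\cdot\zeta)\,dg$ is again a degree-$n$ harmonic, and it is invariant under rotations fixing $e$. It is therefore a multiple $c\,\Phi(e\cdot\zeta)$ of the zonal harmonic about $e$, by uniqueness of zonal harmonics of a given degree. Evaluating at $\zeta=e$ gives $c=\Phi(\xi\cdot e)=\Phi(x)$. Evaluating at a $\zeta$ with $\zeta\cdot e=y$ gives $\Phi(x)\Phi(y)$. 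Since $SO(d)$ acts transitively on the parallel through $\xi$, the same averaged quantity equals the parallel average computed in the previous step. This yields the identity.

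The only step requiring care is this uniqueness-of-zonal-harmonics argument; everything else is bookkeeping of the Jacobian and constants. Alternatively one can simply cite \cite{Olver2010}, (18.17.5) after the identification above.
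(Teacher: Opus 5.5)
Your proposal is correct, and it takes a different route from the paper only in that it actually proves the formula: the paper gives no argument and simply cites the Gegenbauer product formula \cite{Olver2010}, (18.17.5).

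\textbf{Identification.} The paper relies implicitly on $\tilde F_k'=F_{k-1}^{\frac{N+1}{2}}=C^{\frac{N+1}{2}}_{k-1}/C^{\frac{N+1}{2}}_{k-1}(1)$, which it also uses in the proof of Lemma~\ref{GPbound}. Your first step is exactly this identification. The weight $(1-t^2)^{\lambda-1}$ and the constant $\Gamma(\lambda+\frac12)/(\sqrt\pi\,\Gamma(\lambda))$ match correctly after the substitution $t=\cos\phi$.

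\textbf{Derivation.} Beyond the identification, you supply the classical derivation of the product formula via zonal harmonics on $\mathbb{S}^{N+2}$. The ingredients are Haar averaging over the stabilizer of $e$ and uniqueness of zonal harmonics of a given degree, which is valid since the sphere has dimension $N+2\ge 3$. The last ingredient is the density $(1-t^2)^{\frac{N-1}{2}}$ of a coordinate on the parallel $\mathbb{S}^{N+1}$, with the Beta-integral constant $\Gamma(\frac{N+2}{2})/(\sqrt\pi\,\Gamma(\frac{N+1}{2}))$. These are all correct.

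\textbf{Comparison.} Your route is self-contained and explains where the weight and the constant come from. However, it only covers $\lambda$ with $2\lambda+1\in\mathbb{N}$, whereas the cited formula holds for every real $\lambda>0$. Since $N$ is an integer here, nothing is lost.

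\textbf{Wording in the last step.} As written, the Haar average $\int\Phi(g\xi\cdot\zeta)\,dg$ is the uniform average over the $x$-parallel, in $\xi$ with $\zeta$ fixed. Your previous step computed the average over the $y$-parallel, in $\zeta$ with $\xi$ fixed. These two averages coincide, for either of two reasons:
\begin{itemize}
\item writing $\Phi(g\xi\cdot\zeta)=\Phi(\xi\cdot g^{-1}\zeta)$ and using transitivity on the parallel through $\zeta$; or
\item noting that the same coordinate computation with $x$ and $y$ interchanged gives the identical integral.
\end{itemize}
You should state this explicitly rather than appeal to transitivity on the parallel through $\xi$.
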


\begin{lemma}[\cite{GLWY2026Beckner}, Lemma 2.2]
Consider
\begin{equation*}
    K_N(x,y):=\sum_{k=1}^{\infty}(2k+N-1) \tilde F_k'(x)\tilde F_k'(y).   
\end{equation*}
Then we have
\begin{equation}\label{KNxx}
    K_N(x,x)=\frac{N^2}{(N-1)(1-x^2)}
\end{equation}
for any $-1<x<1$, and
\begin{equation}\label{KNxy}
    (1-xy)K_N(x,y)\leq \frac{N^2}{N-1}
\end{equation}
for any $-1<x,y<1$.
\end{lemma}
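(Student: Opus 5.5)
The plan is to obtain a closed form for $K_N$ by Abel summation and then read off both claims. For $0<r<1$ set
\[
K_N^r(x,y)=\sum_{k\ge1}(2k+N-1)r^{k}\tilde F_k'(x)\tilde F_k'(y).
\]
The first step is to compute the one-variable function $K_N^r(z,1)=\sum_k(2k+N-1)r^k\tilde F_k'(z)$. Since $\tilde F_k'=\frac{k!\,N!}{(k+N-2)!\,\lambda_k}C_{k-1}^{\frac{N+1}{2}}$ and $\frac{2k+N-1}{\lambda_k}=\frac1k+\frac1{k+N-1}$, the coefficients split into two pieces. Each piece should be expressible through the generating function $\sum_n C^{\lambda}_n(z)s^n=(1-2sz+s^2)^{-\lambda}$, applying $r$-derivatives and $r$-integrations to absorb the rational factors in $k$. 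Equivalently, one can use $(F_k)'$ and \eqref{recurence3} to pass to the Poisson-type kernel of $F_k$.

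The second step feeds this into Lemma \ref{prodform}, which gives
\[
K_N^r(x,y)=\frac{\Gamma(\frac{N+2}{2})}{\sqrt\pi\,\Gamma(\frac{N+1}{2})}\int_{-1}^1K_N^r\bigl(xy+t\sqrt{(1-x^2)(1-y^2)},1\bigr)(1-t^2)^{\frac{N-1}{2}}\,dt .
\]
Let $z=xy+t\sqrt{(1-x^2)(1-y^2)}$. For $z<1$ the limit $r\to1^-$ of $K_N^r(z,1)$ should exist and be an explicit function of $1-z$. The key expectation is a singular power of order $(1-z)^{-\frac{N+1}{2}}$, because the coefficients grow like $k^{N}$.

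On the diagonal $x=y$, we have $1-z=(1-x^2)(1-t)$. After the limit, the $t$-integral becomes a Beta integral of the form $\int_{-1}^1(1-t)^{-\frac{N+1}{2}+\cdots}(1-t^2)^{\frac{N-1}{2}}\,dt$, which is convergent. Pulling out the power of $(1-x^2)$ should give exactly $\frac{N^2}{(N-1)(1-x^2)}$, with the constant fixed by the Gamma functions. To justify the limit, I would use monotone convergence if the terms are eventually of one sign. Otherwise I would take dominated convergence from a uniform bound on $K_N^r(\cdot,1)$ of the form $C(1-z)^{-\frac{N+1}{2}}$ combined with the integrable weight. As a check, the normalization should match \eqref{KNxx} as $x\to1$.

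For \eqref{KNxy}, note that Cauchy--Schwarz with \eqref{KNxx} goes the wrong way, since $(1-xy)^2\ge(1-x^2)(1-y^2)$. Real use of the explicit form is therefore needed. With $1-z=(1-xy)-t\sqrt{(1-x^2)(1-y^2)}=:A-Bt$, where $0\le B\le A$, I expect
\[
K_N(x,y)=c_N\int_{-1}^1\Phi(A-Bt)\,(1-t^2)^{\frac{N-1}{2}}\,dt
\]
for an explicit decreasing $\Phi$ (essentially a power of $A-Bt$).

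The main obstacle is showing that $A\,K_N$ is at most its value at $B=A$, which is the diagonal case where equality holds. For fixed $A$, I would try to show that the integral is increasing in $B\in[0,A]$, for instance by convexity of $\Phi$ and symmetry in $t$, pairing $t$ with $-t$. I would then evaluate at $B=A$ and recover $\frac{N^2}{N-1}$. If $\Phi$ turns out not to be a pure power but a combination of the two pieces from the splitting $\frac1k+\frac1{k+N-1}$, I would instead prove monotonicity termwise in $B/A$ via a hypergeometric representation of the $t$-integral.
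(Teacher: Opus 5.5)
\paragraph{Verdict.} The paper only quotes this lemma from \cite{GLWY2026Beckner}, so I am judging your plan on its own merits. The architecture is sound: compute the zonal function, insert it into Lemma \ref{prodform}, take a Beta integral on the diagonal, and use monotonicity in $B/A$ off it. However, there is a genuine gap. The load-bearing step, the closed form of $K_N(z,1)=\sum_{k\ge1}(2k+N-1)\tilde F_k'(z)$, is never computed, and the form you predict for it is wrong.

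\paragraph{The predicted exponent is wrong.} There is no $k^N$ growth. Since $\tilde F_k'=F^{\frac{N+1}{2}}_{k-1}$ is normalized by $\tilde F_k'(1)=1$, the coefficients $2k+N-1$ grow only linearly. In the $C^{\frac{N+1}{2}}_{k-1}$ basis they are $N!\,(2k+N-1)\frac{(k-1)!}{(k+N-1)!}\sim 2N!\,k^{1-N}$, so they even decay. The singularity is a simple pole; in fact
\[
K_N(z,1)=\frac{N}{1-z},\qquad -1\le z<1 .
\]
With your exponent $p=\frac{N+1}{2}$, every later step fails:
\begin{itemize}
\item On the diagonal the $t$-integrand becomes $(1-t)^{-1}(1+t)^{\frac{N-1}{2}}$. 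So the ``convergent Beta integral'' diverges, and your dominating function $C(1-z)^{-p}$ is not integrable.
\item The $x$-dependence would be $(1-x^2)^{-p}$ rather than $(1-x^2)^{-1}$.
\item Off the diagonal, $A\,K_N=A^{1-p}\cdot(\text{function of }B/A)$. This is unbounded as $x,y\to1$ with $B/A$ fixed, so \eqref{KNxy} would be false.
\end{itemize}
In fact \eqref{KNxx} already forces exponent $1$. That exponent is exactly what makes $(1-xy)K_N$ depend on $B/A$ alone.

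\paragraph{How to close the gap.} Prove the displayed identity, for instance by checking Gegenbauer coefficients. Let $\mu=\frac{N+1}{2}$. Rodrigues' formula and $n$ integrations by parts (there are no boundary terms) give
\[
\int_{-1}^1(1-z^2)^{\frac N2}\frac{N}{1-z}F^\mu_n\,dz=N\,n!\,2^N\frac{\Gamma(\frac N2)\Gamma(\frac N2+1)}{\Gamma(n+N+1)} .
\]
By the duplication formula this equals $2(n+\mu)\int_{-1}^1(1-z^2)^{\frac N2}(F^\mu_n)^2\,dz$. Equivalently, since $F_k'=\frac{\lambda_k}{N}\tilde F_k'$, the identity is the $z$-derivative of the log-kernel expansion $\log\frac{1}{1-z}=c_0+\sum_{k\ge1}\frac{2k+N-1}{k(k+N-1)}F_k(z)$.

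\paragraph{Finishing from the closed form.} Set $A=1-xy$, $B=\sqrt{(1-x^2)(1-y^2)}$ and $c=\frac{\Gamma(\frac{N+2}{2})}{\sqrt\pi\,\Gamma(\frac{N+1}{2})}$. Lemma \ref{prodform} then gives
\[
K_N(x,y)=Nc\int_{-1}^1\frac{(1-t^2)^{\frac{N-1}{2}}}{A-Bt}\,dt .
\]
To justify this, apply Abel's theorem on the left, since the terms there are $O(k^{-N})$. On the right, use the uniform bound $K_N^r(z,1)\le C(1-z)^{-1}$, which follows from the Poisson-kernel representation.

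On the diagonal the integral equals $\frac{1}{1-x^2}\,2^{N-1}\frac{\Gamma(\frac{N-1}{2})\Gamma(\frac{N+1}{2})}{\Gamma(N)}$. The duplication formula then yields $\frac{N^2}{(N-1)(1-x^2)}$.

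Off the diagonal, $A^2-B^2=(x-y)^2$ gives $\rho=B/A\in[0,1)$, and
\[
(1-xy)K_N(x,y)=Nc\int_0^1\frac{2(1-t^2)^{\frac{N-1}{2}}}{1-\rho^2t^2}\,dt .
\]
This is increasing in $\rho$ and tends to the diagonal constant as $\rho\uparrow1$. This is exactly your $t\leftrightarrow-t$ pairing and convexity argument, so the hypergeometric fallback is not needed.
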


\subsection{Integral identities}
We define the auxiliary function
\begin{equation*}%\label{def G}
G(x)=(1-x^2)u',
\end{equation*}
where $u$ is a solution to \eqref{axial} or \eqref{axialodd}. By elliptic regularity results, one can check that $u$ is a smooth function on $\mathbb{S}^N$. For a proof, we refer to \cite[Proposition 2.4]{GLWY2026Beckner}, and we remark that the argument actually doesn't require $\alpha<1$. In the following, we introduce some useful integral identities.

When $N$ is even, $G$ satisfies the equation
\begin{equation}\label{G}
\alpha(-1)^{\frac{N}{2}}[(1-x^2)^{\frac{N-2}{2}} G]^{(N-1)}+(N-1)!-\frac{(N-1)!\sqrt{\pi}\Gamma(\frac{N}{2})}{\Gamma(\frac{N+1}{2})\gamma}e^{Nu}=0,
\end{equation}
where
\begin{equation*}%\label{def gamma}
\gamma=\int_{-1}^{1}(1-x^2)^\frac{N-2}{2} e^{Nu}dx.
\end{equation*}
Differentiating the equation \eqref{G} and eliminating $e^{Nu}$ with \eqref{G}, we obtain the equation
\begin{equation}\label{Gequation}
\begin{aligned}
    (-1)^{\frac{N}{2}}(1-x^2)^{\frac{N}{2}}[(1-x^2)^{\frac{N-2}{2}}G]^{(N)}-\frac{N!}{\alpha}(1-x^2)^{\frac{N-2}{2}}G-(-1)^\frac{N}{2}N(1-x^2)^{\frac{N-2}{2}}G[(1-x^2)^{\frac{N-2}{2}}G]^{(N-1)}=0.
\end{aligned}
\end{equation}

When $N$ is odd, by Lemma \ref{PNwf} and \eqref{axialodd}, $G$ satisfies the equation
\begin{equation}\label{Godd}
P_{r,N}G=NG\left(P_{r,N}u+\frac{(N-1)!}{\alpha}\right)-NxP_{r,N}u.
\end{equation}

We expand $G$ in terms of Gegenbauer polynomials
\begin{equation}
\label{Gexpand}
G=a_0F_0+\beta x+a_2F_2(x)+\sum_{k=3}^{\infty}a_kF_k(x),
\end{equation}
and define
\begin{equation*}
%\label{gdef}
g:= (1-x^2)^{\frac{N-2}{2}} \frac{e^{Nu}}{\gamma}, \ a:=\int_{-1}^1 (1-x^2)gdx.
\end{equation*}

Then we have the following integral identities.
\begin{lemma}
Let $a$, $g$, and $G$ be as above. Then for every $N$, we have $a_0=0$ and the following integral identities
\begin{equation}\label{F1 beta}
    \int_{-1}^{1}(1-x^2)^{\frac{N-2}{2}}F_1Gdx=\frac{\sqrt{\pi}\Gamma(\frac{N}{2})}{2\Gamma(\frac{N+3}{2})}\beta,
\end{equation}
\begin{equation}\label{a}
    a=\int_{-1}^{1}(1-x^2)g=\frac{N}{N+1}(1-\alpha\beta),
\end{equation}
\begin{equation}\label{bbk}
    \int_{-1}^{1}(1-x^2)^{\frac{N-2}{2}}F_k Gdx=-\frac{2^{N-1}\Gamma(\frac{N}{2})^2 \Gamma(k)}{\alpha \Gamma(k+N)}\int_{-1}^{1}(1-x^2)gF_{k}'dx,\text{ }k\geq 2.
\end{equation}

If $N$ is even, then
\begin{equation}\label{D(N-2/2)G}
    \int_{-1}^{1}\left|[(1-x^2)^{\frac{N-2}{2}}G]^{(\frac{N-2}{2})}\right|^2dx=\frac{\sqrt{\pi}(N-2)!\Gamma(\frac{N}{2})}{\Gamma(\frac{N+3}{2})}\left(N+1-\frac{1}{\alpha}\right)\beta=\frac{2^N\Gamma(\frac{N}{2})^2}{(N+1)(N-1)}\left(N+1-\frac{1}{\alpha}\right)\beta.
\end{equation}

If $N$ is odd, then
\begin{equation}\label{D(N-2/2)Godd}
    \int_{-1}^{1}(1-x^2)^{\frac{N-2}{2}}G\mathcal{Q}_{N}G dx=\frac{\sqrt{\pi}(N-2)!\Gamma(\frac{N}{2})}{\Gamma(\frac{N+3}{2})}\left(N+1-\frac{1}{\alpha}\right)\beta=\frac{2^N\Gamma(\frac{N}{2})^2}{(N+1)(N-1)}\left(N+1-\frac{1}{\alpha}\right)\beta,
\end{equation}
where
\begin{equation*}
\mathcal{Q}_{N}:=
\begin{cases}
\left(-(1-x^2)\frac{d^2}{dx^2}+3x\frac{d}{dx}+1\right)^{\frac{1}{2}}, &N=3,\\
\left(-(1-x^2)\frac{d^2}{dx^2}+Nx\frac{d}{dx}+(\frac{N-1}{2})^2\right)^{\frac{1}{2}}\prod_{k=1}^{\frac{N-3}{2}}\left(-(1-x^2)\frac{d^2}{dx^2}+Nx\frac{d}{dx}+k(N-k-1)\right), &N\geq 5.
\end{cases}
\end{equation*}
Moreover, we have
\begin{equation}\label{xGQ_NG}
    \int_{-1}^{1}(1-x^2)^{\frac{N-2}{2}}xP_{r,N}u dx=\frac{N-1}{2}\int_{-1}^{1}(1-x^2)^{\frac{N-2}{2}}G\mathcal{Q}_{N}G dx
\end{equation}
\end{lemma}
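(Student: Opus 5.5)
We will derive every identity by testing the equation against Gegenbauer polynomials, exploiting self-adjointness of $P_{r,N}$ with respect to the weight $(1-x^2)^{\frac{N-2}{2}}$ together with the eigenvalue relation \eqref{Paneitzeigen}. Write the equation as $\alpha P_{r,N}u=(N-1)!\,(c_N g(1-x^2)^{-\frac{N-2}{2}}-1)$, where $c_N=\frac{\sqrt\pi\,\Gamma(\frac N2)}{\Gamma(\frac{N+1}2)}$ is the total weighted mass. For even $N$ this is \eqref{axial}, rewritten using $P_{r,N}u=(-1)^{N/2}(1-x^2)^{-\frac{N-2}{2}}[(1-x^2)^{\frac N2}u']^{(N-1)}$. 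For odd $N$ it is \eqref{axialodd}.

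\textbf{Step 1: expressing the coefficients of $G$.} Expanding $G$ reduces to computing $\int(1-x^2)^{\frac{N-2}{2}}F_kG\,dx$. Since $G=(1-x^2)u'$, integration by parts gives
\[
\int(1-x^2)^{\frac N2}F_ku'\,dx=-\int u\,\bigl((1-x^2)^{\frac N2}F_k\bigr)'\,dx .
\]
By the Rodrigues-type relation, $\bigl((1-x^2)^{\frac N2}F_k\bigr)'$ equals $(1-x^2)^{\frac{N-2}{2}}$ times a combination of $F_{k\pm1}$, via \eqref{recurence2} and \eqref{recurence1}. Hence each coefficient of $G$ is a combination of the coefficients $u_{k\pm1}$ of $u$.

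\textbf{Step 2: relating the coefficients of $u$ to $g$.} Testing the equation against $F_k$ and using \eqref{Paneitzeigen} gives
\[
\alpha\frac{\Gamma(k+N)}{\Gamma(k)}\,u_k\,\|F_k\|^2=(N-1)!\,c_N\int gF_k\qquad(k\ge1).
\]
The identities then follow.

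\textbf{$a_0=0$ and \eqref{F1 beta}.} $a_0=0$ holds because $\int(1-x^2)^{\frac{N-2}{2}}G\,dx=\int(1-x^2)^{\frac N2}u'\,dx=N\int x(1-x^2)^{\frac{N-2}{2}}u\,dx=0$ by \eqref{u centre}. Identity \eqref{F1 beta} is just orthogonality combined with $F_1=x$ and the norm formula \eqref{Fknorm}.

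\textbf{\eqref{a}.} Test against $F_1=x$. Then $\beta$ is proportional to $\int(1-x^2)^{\frac N2}u'=N\int x(1-x^2)^{\frac{N-2}{2}}u$, and more relevantly to $\int(1-x^2)^{\frac N2}u'$ paired with $F_2$. A cleaner route is to multiply the equation by $(1-x^2)^{\frac{N-2}{2}}(1-x^2)F_1'=(1-x^2)^{\frac N2}$ and integrate. The $g$ term gives $a$, the constant term gives a Beta integral, and the $P$ term, after moving $P_{r,N}$ onto $u$'s expansion, picks out $\beta$ through Step 1. Balancing constants yields $a=\frac N{N+1}(1-\alpha\beta)$.

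\textbf{\eqref{bbk}.} Here the cleanest approach is to integrate the equation after multiplying by $(1-x^2)^{\frac{N-2}{2}}\int F_k$ differently. Apply $P_{r,N}$ to the $k$-th coefficient relation of $G$ from Step 1, or equivalently use the commutation $P_{r,N}[(1-x^2)\partial_x]$ versus $[(1-x^2)\partial_x]P_{r,N}$. The $k$-th coefficient of $G$ equals $-\int(1-x^2)^{\frac{N-2}{2}}u\,\mathcal{D}_kF_k$. Because $k\ge2$, the constant term of the equation drops out by orthogonality, and $F_k'$ arises from $\int(1-x^2)g\,F_k'=-\int g\ldots$. I expect this constant bookkeeping, which must produce exactly $\frac{2^{N-1}\Gamma(N/2)^2\Gamma(k)}{\alpha\Gamma(k+N)}$, to be the main obstacle. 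I would verify it by checking $k=2$ directly.

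\textbf{\eqref{D(N-2/2)G} and \eqref{D(N-2/2)Godd}.} Multiply the $G$-equation \eqref{G}, or its odd counterpart, by $(1-x^2)^{\frac{N-2}{2}}G$ and integrate. Set $\phi=(1-x^2)^{\frac{N-2}{2}}G$. For even $N$, integrating $\phi\,\phi^{(N-1)}$ by parts $\frac{N-2}{2}$ times gives $\pm\int|\phi^{(\frac{N-2}{2})}|^2$; boundary terms vanish because $\phi$ vanishes to order $\frac{N-2}{2}$ at $\pm1$. For odd $N$ this term is replaced by $\int\phi\,\mathcal Q_NG$, using $P_{r,N}u=\mathcal Q_N$ applied suitably, i.e. $P_{r,N}=\mathcal Q_N\circ(-\text{Laplacian})$ and $-\Delta u$ corresponding to $-(1-x^2)^{-\frac{N-2}{2}}\phi'$. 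The constant term integrates to zero since $a_0=0$. The $g$ term becomes $\int(1-x^2)gu'=\int g\,G/(1-x^2)\cdot(1-x^2)$, which after integration by parts equals a multiple of $\int x g=0$ plus $a$-terms. Substituting \eqref{a} gives the stated multiple of $(N+1-\frac1\alpha)\beta$. Finally, \eqref{xGQ_NG} follows by writing $x\,P_{r,N}u$ and moving the self-adjoint factor $\mathcal Q_N$ onto $x$. The remaining operator $-\Delta$, acting through $xu$, produces $G$ together with the factor $\frac{N-1}{2}$ coming from the eigenvalue ratio.
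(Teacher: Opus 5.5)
Your arguments for $a_0=0$, \eqref{F1 beta}, \eqref{a} and \eqref{bbk} are workable in outline. For instance, Steps 1--2 do close \eqref{bbk} once you use $\bigl((1-x^2)^{\frac N2}F_k\bigr)'=\frac{(1-x^2)^{\frac{N-2}{2}}}{2k+N-1}\bigl[k(k-1)F_{k-1}-(k+N)(k+N-1)F_{k+1}\bigr]$, Step 2 at $k\pm1$, \eqref{recurence2}, and $(N-1)!\,c_N=2^{N-1}\Gamma(\frac N2)^2$, where $c_N=\frac{\sqrt\pi\,\Gamma(\frac N2)}{\Gamma(\frac{N+1}{2})}$ is your mass constant. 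One correction: in fact $P_{r,N}u=(-1)^{\frac N2}[(1-x^2)^{\frac N2}u']^{(N-1)}$, with no factor $(1-x^2)^{-\frac{N-2}{2}}$.

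\textbf{The gap: the quadratic identities.} These are \eqref{D(N-2/2)G}, \eqref{D(N-2/2)Godd} and \eqref{xGQ_NG}. Testing the equation against $\phi=(1-x^2)^{\frac{N-2}{2}}G$ yields only $0=0$, for three reasons:
\begin{itemize}
\item Since $N-1=2M+1$ is odd, $\int\phi\,\phi^{(N-1)}dx=(-1)^M\int\phi^{(M)}\phi^{(M+1)}dx$ is a pure boundary term, hence zero. It is not $\pm\int|\phi^{(M)}|^2$.
\item The constant term is $\int\phi\,dx=0$, because $a_0=0$.
\item The exponential term is exactly $c_N\int xg\,dx=0$; no $a$-terms appear.
\end{itemize}
More generally, $\int(1-x^2)^{\frac{N-2}{2}}G\,P_{r,N}u\,dx=0$ for every smooth $u$ (a Kazdan--Warner-type identity), so your odd-$N$ version fails the same way. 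Neither $\beta$ nor $a$ can ever enter. The missing idea is the Pohozaev-type multiplier $x\phi$. Since $\phi=(1-x^2)^{M+1}u'$ vanishes to order $M+1$ at $\pm1$,
\begin{equation*}
\int_{-1}^{1}x\phi\,\phi^{(2M+1)}dx=(-1)^{M+1}\frac{2M+1}{2}\int_{-1}^{1}|\phi^{(M)}|^2dx,
\end{equation*}
and this is where the factor $\frac{N-1}{2}$ comes from. With this multiplier, the constant term gives $(N-1)!\,\beta\int_{-1}^1(1-x^2)^{\frac{N-2}{2}}x^2dx$. The exponential term gives
\begin{equation*}
(N-1)!\,c_N\int xGg\,dx=-\frac{(N-1)!\,c_N}{N}\int(1-x^2-Nx^2)g\,dx=-\frac{(N-1)!\,c_N}{N}\bigl((N+1)a-N\bigr)=(N-1)!\,c_N\alpha\beta,
\end{equation*}
using \eqref{a}. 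Adding these yields \eqref{D(N-2/2)G} exactly.

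\textbf{The odd case and \eqref{xGQ_NG}.} As printed, \eqref{xGQ_NG} has lost a factor $G$. Its left side equals $N!\int(1-x^2)^{\frac{N-2}{2}}xu\,dx=0$ by self-adjointness and \eqref{u centre}. That vanishing is all your ``move $\mathcal Q_N$ onto $x$'' argument can produce, since a linear functional of $u$ cannot equal a quadratic form in $G$. The identity actually used in Section~\ref{seminorm estimates odd} is
\begin{equation*}
\int_{-1}^{1}(1-x^2)^{\frac{N-2}{2}}xG\,P_{r,N}u\,dx=\frac{N-1}{2}\int_{-1}^{1}(1-x^2)^{\frac{N-2}{2}}G\,\mathcal Q_NG\,dx .
\end{equation*}
For odd $N$ this is a genuinely nonlocal Pohozaev identity, and your proposal never establishes it. One way to prove it is to polarize in $u$ and verify on pairs of Gegenbauer modes $u=F_j$, $v=F_k$, using \eqref{recurence1}--\eqref{recurence2}, $P_{r,N}F_k=\frac{\Gamma(k+N)}{\Gamma(k)}F_k$ and $\mathcal Q_NF_k=\frac{\Gamma(k+N-1)}{\Gamma(k+1)}F_k$. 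Only after that does testing \eqref{axialodd} against $x(1-x^2)^{\frac{N-2}{2}}G$ give \eqref{D(N-2/2)Godd} by the same computation as in the even case. The paper itself does not prove any of this here; it defers the whole lemma to Lemma 2.5 of its companion paper.
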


\begin{proof}
The proof can be found in \cite[Lemma 2.5]{GLWY2026Beckner}.
\end{proof}

For odd $N$, the local integration-by-parts argument used in even dimensions is no longer applicable. Thus we need the formula about the nonlocal operator $P_{r,N}$, whose proof can be found in \cite[Lemma B.1]  {GLWY2026Beckner}.

\begin{lemma}[\cite{GLWY2026Beckner}, Lemma B.1]\label{PNwf}
For any odd $N\geq 3$ and smooth function $f$ on $[-1,1]$, we have
\begin{equation*}
P_{r,N}((1-x^2)f')=(1-x^2)(P_{r,N}f)'-NxP_{r,N}f.
\end{equation*}
\end{lemma}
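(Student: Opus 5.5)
The plan is to prove the identity by checking it on the Gegenbauer basis $\{F_k\}_{k\ge 0}$, and then to extend it to all smooth $f$ by linearity and density. On axially symmetric functions the operator $P_{r,N}$ in \eqref{def P rN} is a function of the axial Laplacian $L:=-(1-x^2)\frac{d^2}{dx^2}+Nx\frac{d}{dx}$. The square root is understood spectrally. Since $LF_k=\lambda_kF_k$, it acts diagonally as recorded in \eqref{Paneitzeigen}:
\begin{equation*}
P_{r,N}F_k=\mu_kF_k,\qquad \mu_k:=\frac{\Gamma(k+N)}{\Gamma(k)}\ (k\ge1),\quad \mu_0=0.
\end{equation*}
Both sides of the claimed identity are linear in $f$. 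So it suffices to verify it for $f=F_k$ and then to justify passing to the limit.

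\textbf{Computing both sides for $f=F_k$.} Write $c_k:=\lambda_k/(2k+N-1)$.

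For the left side, \eqref{recurence2} gives $(1-x^2)F_k'=c_k(F_{k-1}-F_{k+1})$, hence
\begin{equation*}
P_{r,N}\big((1-x^2)F_k'\big)=c_k\big(\mu_{k-1}F_{k-1}-\mu_{k+1}F_{k+1}\big).
\end{equation*}

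For the right side, $(1-x^2)(P_{r,N}F_k)'-NxP_{r,N}F_k=\mu_k\big[(1-x^2)F_k'-NxF_k\big]$. We expand $xF_k$ by \eqref{recurence1} and $(1-x^2)F_k'$ by \eqref{recurence2}. The coefficient of $F_{k-1}$ is then
\begin{equation*}
\mu_k\frac{\lambda_k-Nk}{2k+N-1}=\mu_k\frac{k(k-1)}{2k+N-1},
\end{equation*}
and the coefficient of $F_{k+1}$ is
\begin{equation*}
-\mu_k\frac{\lambda_k+N(k+N-1)}{2k+N-1}=-\mu_k\frac{(k+N-1)(k+N)}{2k+N-1}.
\end{equation*}

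\textbf{Matching coefficients.} It remains to check two Gamma-function identities:
\begin{equation*}
\lambda_k\mu_{k-1}=k(k-1)\mu_k,\qquad \lambda_k\mu_{k+1}=(k+N-1)(k+N)\mu_k .
\end{equation*}
For $k\ge2$ both sides of the first equal $k\,\Gamma(k+N)/\Gamma(k-1)$. Both sides of the second equal $(k+N-1)\Gamma(k+N+1)/\Gamma(k)$. The low cases need a separate look:
\begin{itemize}
\item For $k=1$, the $F_0$-terms vanish on both sides, since $\mu_0=0$ and $k(k-1)=0$.
\item For $k=0$, $f$ is constant, so both sides are zero.
\end{itemize}
This proves the identity on every $F_k$, and hence on all polynomials.

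\textbf{Extension to smooth $f$.} For $f$ smooth on $[-1,1]$, its Gegenbauer coefficients decay faster than any power of $k$. This follows by repeatedly applying the self-adjoint operator $L$ against the weight $(1-x^2)^{\frac{N-2}{2}}$. The $F_k$ and their derivatives grow only polynomially in $k$ uniformly on $[-1,1]$, and $\mu_k=O(k^N)$. Therefore all series obtained by applying $P_{r,N}$, $d/dx$, and multiplication by $x$ or $(1-x^2)$ converge absolutely and uniformly, and termwise manipulation is legitimate. The identity for $f$ follows by summing the basis identities.

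I expect the only genuinely delicate point to be this justification step: making precise that the nonlocal square-root factor is defined by the spectral calculus of $L$, so that $P_{r,N}$ is exactly the diagonal multiplier $\mu_k$, and that the interchange of $P_{r,N}$ with the infinite sums is valid. The algebraic verification itself is routine once the recurrences \eqref{recurence1}--\eqref{recurence2} are used.
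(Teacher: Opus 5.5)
Your proof is correct: the coefficient identities $\lambda_k\mu_{k-1}=k(k-1)\mu_k$ and $\lambda_k\mu_{k+1}=(k+N-1)(k+N)\mu_k$ hold, the low cases $k=0,1$ are handled properly, and the passage to smooth $f$ (rapid decay of Gegenbauer coefficients under repeated use of $L$, polynomial growth of $F_k$, its derivatives and $\mu_k$) is sound. The paper does not reprove this lemma but cites \cite{GLWY2026Beckner}; checking the identity on the basis $\{F_k\}$ via \eqref{recurence1}, \eqref{recurence2} and \eqref{Paneitzeigen} is the same mechanism the paper uses for its other operator identities, e.g.\ Claim~1 in the proof of Lemma~\ref{GPbound}.
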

Using this lemma and integration-by-parts, we obtain the following identity.
\begin{lemma}\label{descent}
For any odd $N\geq 3$ and smooth functions $f_1$, $f_2$, $f_3$, we have
\begin{equation*}
\int_{-1}^{1}(1-x^2)^{\frac{N}{2}-1}f_2f_3P_{r,N}((1-x^2)f_1')dx=-\int_{-1}^{1}(1-x^2)^{\frac{N}{2}}f_2'f_3P_{r,N}f_1dx-\int_{-1}^{1}(1-x^2)^{\frac{N}{2}}f_2f_3'P_{r,N}f_1dx.
\end{equation*}

\end{lemma}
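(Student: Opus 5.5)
The identity should follow from Lemma \ref{PNwf} and a single integration by parts, together with the self-adjointness of $P_{r,N}$ with respect to the weight $(1-x^2)^{\frac{N-2}{2}}$. I will read the right-hand side as involving $P_{r,N}f_1$ (not its derivative) and aim to produce exactly that.

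By Lemma \ref{PNwf}, the left-hand side equals
\[
\int_{-1}^{1}(1-x^2)^{\frac N2-1}f_2f_3\Big[(1-x^2)(P_{r,N}f_1)'-NxP_{r,N}f_1\Big]dx .
\]
Set $h=P_{r,N}f_1$. The first term is $\int_{-1}^1(1-x^2)^{\frac N2}f_2f_3h'\,dx$, and I integrate it by parts. The boundary term is $\big[(1-x^2)^{\frac N2}f_2f_3h\big]_{-1}^{1}$, which vanishes because $N\ge3$ and $h$ is smooth on $[-1,1]$. Smoothness of $h$ holds since $f_1$ is smooth, so its Gegenbauer coefficients decay rapidly, and $P_{r,N}$ acts diagonally on them by \eqref{Paneitzeigen} with polynomially growing eigenvalues. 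After the integration by parts, the first term becomes
\[
-\int_{-1}^{1}\big[(1-x^2)^{\frac N2}\big]'f_2f_3h\,dx-\int_{-1}^{1}(1-x^2)^{\frac N2}(f_2f_3)'h\,dx .
\]
Since $\big[(1-x^2)^{\frac N2}\big]'=-Nx(1-x^2)^{\frac N2-1}$, the first of these two integrals equals $+N\int_{-1}^1 x(1-x^2)^{\frac N2-1}f_2f_3h\,dx$. This cancels exactly the term $-N\int x(1-x^2)^{\frac N2-1}f_2f_3h\,dx$ coming from Lemma \ref{PNwf}.

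What remains is $-\int_{-1}^1(1-x^2)^{\frac N2}(f_2'f_3+f_2f_3')P_{r,N}f_1\,dx$, which is the claimed right-hand side.

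The only delicate point is justifying the vanishing boundary term, that is, the regularity of $P_{r,N}f_1$ up to $x=\pm1$ given that $P_{r,N}$ is nonlocal. I would handle it as described above: express $P_{r,N}$ spectrally via \eqref{Paneitzeigen}, and note that a smooth axially symmetric function corresponds to a smooth function on $\mathbb{S}^N$, on which $P_N$ preserves smoothness. Everything else is routine.
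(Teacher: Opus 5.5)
Your proof is correct and follows the route the paper indicates ("using this lemma and integration-by-parts"). You substitute Lemma \ref{PNwf}, integrate the $(1-x^2)^{\frac N2}f_2f_3(P_{r,N}f_1)'$ term by parts, and note that the derivative of the weight cancels the $-NxP_{r,N}f_1$ term, with no boundary contribution because $P_{r,N}f_1$ is smooth on $[-1,1]$; the self-adjointness of $P_{r,N}$ mentioned in your opening sentence is never actually used.
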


% \begin{lemma}[Lemma 3.2 in \cite{GHW2022}]\label{G_j}
% For all $x\in (-1,1)$, we have
% \begin{equation}
    % G_j:=(-1)^j[(1-x^2)^j G]^{(2j+1)}\leq \frac{(2j+1)!}{\alpha},\ 0\leq j\leq \frac{N}{2}-1.
    % \end{equation}
% \end{lemma}

\subsection{A weighted $\ell^2$ estimate of Gegenbauer coefficients}\label{weightedl^2est}
In this subsection, we recall a weighted $\ell^2$ estimate for the Gegenbauer coefficients 
\begin{equation*}
b_{k}:=a_{k} \sqrt{ \int_{-1}^{1}(1-x^2)^{\frac{N-2}{2}}F_{k}^{2}dx},    
\end{equation*}
where $a_k$ is the $k$-th coefficient in the expansion of $G$, see \eqref{Gexpand}. 

The estimate for $b_k$ is one of the most important ingredients of this paper. In \cite{GHX2021}, Gui-Hu-Xie used \eqref{bbk} and the fact that
\begin{equation*}
|F_k'(x)|\leq |F_k'(1)|=\frac{\lambda_k}{N}
\end{equation*}
to estimate $b_k$ for $k\geq 2$ as follows
\begin{equation*}
\begin{aligned}
    b_{k}^{2}&=\frac{1}{\int_{-1}^{1}(1-x^2)^{\frac{N-2}{2}}F_{k}^{2}}\left[\frac{2^{N-1}\Gamma(\frac{N}{2})^2 \Gamma(k)}{\alpha \Gamma(k+N)}\int_{-1}^{1}(1-x^2)gF_{k}'\right]^2\\
    &\leq \frac{(k+N-2)!(2k+N-1)}{2^{N-1}\Gamma(\frac{N}{2})^2 k!}\left[\frac{2^{N-1}\Gamma(\frac{N}{2})^2 \Gamma(k)}{\alpha \Gamma(k+N)}\frac{\lambda_k}{N}a\right]^2\\
    &=\frac{2^{N-1}\Gamma(\frac{N}{2})^2 (2k+N-1)\Gamma(k+1)}{\alpha^2N^2\Gamma(k+N-1)}a^2.
\end{aligned}    
\end{equation*}
However, their estimates are not strong enough to reach the sharp constant $\alpha=\frac{1}{2}$ for $N=4,6,8$.

The works of Li-Wei-Ye \cite{LWY2022} and Gui-Li-Wei-Ye \cite{GLWY2025} refined this estimate and closed the gap $\alpha=\frac{1}{2}$ for $N=4,6,8$. However, the refined estimates are still difficult to generalize to arbitrary $N$.

We now derive a weighted $\ell^2$ estimate for $b_k$. To this end, we define
\begin{equation*}%\label{def Ak}
A_{k}:=\int_{-1}^{1}(1-x^2)\tilde{F}_{k}'g dx.
\end{equation*}
Recalling the definition of $g$, \eqref{AxialLrN} and \eqref{a}, we have
\begin{equation*}
\int_{-1}^{1}g dx=1,\ \int_{-1}^{1}xg dx=0\text{ and }\int_{-1}^{1}(1-x^2)g dx=a=\frac{N}{N+1}(1-\alpha\beta).
\end{equation*}

By \eqref{bbk}, we have
\begin{equation*}
    \frac{\Gamma(k+N-1)}{\Gamma(k+1)}b_k^2=\frac{2^{N-1}\Gamma(\frac{N}{2})^2}{\alpha^2 N^2}(2k+N-1)A_k^2
\end{equation*}
for $k\geq 2$.

With the above properties of $g$, the next theorem provides a weighted $\ell^2$ estimate on $A_k$ and hence on $b_k$, which yields a lower bound of $a$.
\begin{theorem}[\cite{GLWY2026Beckner}, Theorem 3.1]\label{bk}
Let $N\geq 2$ be an integer and let $a$, $A_k$ be as above. Then we have
\begin{equation*}
    Na-(N+1)a^2\leq \sum_{k=2}^{\infty}(2k+N-1)A_k^2\leq \frac{N^2}{N-1}\frac{a}{2-a}-(N+1)a^2.
\end{equation*}
As a consequence, we have $a\geq \frac{N-2}{N-1}$.
\end{theorem}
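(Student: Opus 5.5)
The plan is to write both sums as double integrals against the kernel $K_N$ and then use the two kernel facts \eqref{KNxx}, \eqref{KNxy} together with the moment conditions $\int_{-1}^1 g\,dx=1$, $\int_{-1}^1 xg\,dx=0$, $\int_{-1}^1 (1-x^2)g\,dx=a$ and $g\ge 0$. Since $\tilde F_1'\equiv 1$, we have $A_1=a$, and its weight is $2\cdot 1+N-1=N+1$. Hence
\begin{equation*}
S:=\sum_{k=1}^{\infty}(2k+N-1)A_k^2=\int_{-1}^{1}\!\int_{-1}^{1}(1-x^2)(1-y^2)K_N(x,y)g(x)g(y)\,dx\,dy ,
\end{equation*}
and the claimed two-sided bound is equivalent to $Na\le S\le \frac{N^2}{N-1}\frac{a}{2-a}$.

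\emph{Upper bound.} Since $g\ge0$, \eqref{KNxy} gives $S\le \frac{N^2}{N-1}\iint \frac{(1-x^2)(1-y^2)}{1-xy}gg$. I will use the identity $(1-x^2)(1-y^2)=(1-xy)^2-(x-y)^2$. The first piece integrates to $\iint(1-xy)gg=1-(\int xg)^2=1$. For the second piece, set $m_2=\int x^2g=1-a$. Then $\iint (x-y)^2gg=2m_2$, and, using $\int xg=0$, one also gets $\iint (x-y)^2(1-xy)gg=2m_2+2m_2^2$. Cauchy--Schwarz then gives
\begin{equation*}
\iint\frac{(x-y)^2}{1-xy}gg\ \ge\ \frac{4m_2^2}{2m_2(1+m_2)}=\frac{2(1-a)}{2-a}.
\end{equation*}
Therefore $\iint\frac{(1-x^2)(1-y^2)}{1-xy}gg\le 1-\frac{2(1-a)}{2-a}=\frac{a}{2-a}$, which is the upper bound.

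\emph{Lower bound.} I would not use the kernel pointwise here. Instead I will pass to the $F_j$ basis. Differentiating \eqref{recurence3} (or using \eqref{recurence2} at the level of $\tilde F_k'$) should give
\[
(1-x^2)\tilde F_k'=c_k\,(F_{k-1}-F_{k+1})
\]
with explicit $c_k>0$. Then $A_k=c_k(B_{k-1}-B_{k+1})$ with $B_j:=\int_{-1}^1 F_j g\,dx$. Here $B_0=1$, $B_1=0$, and $B_2$ is an affine function of $a$ because $F_2$ is an explicit quadratic. This turns $S$ into
\[
S=\sum_{k\ge1}(2k+N-1)c_k^2(B_{k-1}-B_{k+1})^2 .
\]
The target is to rewrite $S-Na$ as a sum of nonnegative squares: expand the squares, use the telescoping structure of the $c_k^2$, and isolate the terms involving $B_0$ and $B_2$, which should produce exactly the linear term $Na$. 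I expect this algebraic rearrangement (equivalently, showing that $(1-x^2)(1-y^2)K_N(x,y)-\frac N2(2-x^2-y^2)$ is positive semidefinite modulo terms killed by $\int xg=0$) to be the main obstacle. If it fails, my fallback is a Bessel/Cauchy--Schwarz argument with a test function built from \eqref{KNxx}.

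\emph{Consequence.} Since $g\ge0$ and $g\not\equiv0$, we have $a>0$. Comparing the two bounds gives $Na\le \frac{N^2}{N-1}\frac{a}{2-a}$. Dividing by $Na$ yields $2-a\le\frac{N}{N-1}$, that is, $a\ge\frac{N-2}{N-1}$.
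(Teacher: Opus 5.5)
Your upper bound is correct. Since $\tilde F_1'\equiv1$, you have $A_1=a$, and the identity $(1-x^2)(1-y^2)=(1-xy)^2-(x-y)^2$, the moment computations and the Cauchy--Schwarz step all check out. The interchange of sum and integral is justified by dominated convergence: Cauchy--Schwarz and \eqref{KNxx} bound the partial sums of $(1-x^2)(1-y^2)K_N(x,y)$ by $\frac{N^2}{N-1}$.

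The gap is the lower bound, and it cannot be closed along your route. The inequality $S\ge Na$ is false for general $g\ge0$ with $\int g=1$ and $\int xg=0$. So no sum-of-squares rearrangement, no positive semidefiniteness of $(1-x^2)(1-y^2)K_N-\frac N2(2-x^2-y^2)$, and no Bessel-type argument using only these constraints can prove it. Take $g=(1-x^2)^{\frac{N-2}{2}}(c_0+\epsilon c_2F_2)$, with $c_0,c_2$ chosen so that $B_0=1$, $B_2=\epsilon$ and $B_j=0$ otherwise. For small $\epsilon>0$ this $g$ is positive on $(-1,1)$ and satisfies $\int xg=0$. Your own formula $A_k=\frac{N}{2k+N-1}(B_{k-1}-B_{k+1})$ gives $a=\frac{N}{N+1}(1-\epsilon)$, $A_2=0$, $A_3=\frac{N\epsilon}{N+5}$, and $A_k=0$ for $k\ge4$. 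Hence
\[
\sum_{k\ge2}(2k+N-1)A_k^2=\frac{N^2\epsilon^2}{N+5}<\frac{N^2}{N+1}\epsilon(1-\epsilon)=Na-(N+1)a^2 .
\]
There is also a quicker way to see the problem. Your upper bound holds for every such $g$, so a universal lower bound would force $a\ge\frac{N-2}{N-1}$ for every centered probability density. That fails for $N\ge3$ when $g$ is concentrated near $\pm1$.

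The missing ingredient is the equation itself, together with $\alpha\ge\frac12$; this is what ``as above'' carries. By \eqref{bbk}, for $k\ge2$,
\[
\frac{\Gamma(k+N-1)}{\Gamma(k+1)}b_k^2=\frac{2^{N-1}\Gamma(\frac N2)^2}{\alpha^2N^2}(2k+N-1)A_k^2,
\qquad
\Gamma(N)b_1^2=\frac{2^{N-1}\Gamma(\frac N2)^2}{N+1}\beta^2 .
\]
Insert these into \eqref{D(N-2/2)G}, or into \eqref{D(N-2/2)Godd} using $\mathcal Q_NF_k=\frac{\Gamma(k+N-1)}{\Gamma(k+1)}F_k$ for odd $N$. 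Writing $t=\alpha\beta$, so that $a=\frac{N}{N+1}(1-t)$ by \eqref{a}, you get the exact identities
\[
\sum_{k\ge2}(2k+N-1)A_k^2=\frac{N^2}{N+1}\Big[\frac{2((N+1)\alpha-1)}{N-1}\,t-t^2\Big],
\qquad
Na-(N+1)a^2=\frac{N^2}{N+1}(t-t^2).
\]
Their difference is $\frac{N^2(2\alpha-1)}{N-1}\,t$. This is nonnegative because $\alpha\ge\frac12$ and $t\ge0$: the left side of \eqref{D(N-2/2)G} is nonnegative and $N+1-\frac1\alpha\ge N-1>0$. So the lower bound is this identity evaluated at $\alpha=\frac12$, not a positivity property of the kernel. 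With this replacement, your upper bound and your final step ($a>0$, then divide by $Na$) complete the proof.
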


\section{Seminorm estimates}\label{seminorm estimates}
This section is devoted to refined estimates on the seminorm $\lfloor G\rfloor^2$. These estimates are one of the key ingredients of this paper. The seminorm $\lfloor G\rfloor^2$ estimate is obtained by a new integration by parts strategy to deal with the cubic term $I_N$.

\subsection{Seminorm estimates for $N$ even}\label{seminorm estimates even}

In this subsection, we assume $N$ is even. To get an estimate on $\beta$ and $a=\frac{N}{N+1}(1-\alpha\beta)$, we need an estimate of the following seminorm $\lfloor G\rfloor^2$ defined by
\begin{equation}\label{Gfloor def}
\lfloor G\rfloor^2:=(-1)^\frac{N}{2}\int_{-1}^{1}(1-x^2)^{\frac{N-2}{2}}[(1-x^2)^{\frac{N}{2}}G']^{(N-1)}G dx.
\end{equation}

Testing \eqref{Gequation} against $G$ and integrating by parts, we have
\begin{equation}\label{Gfloor I}
\lfloor G\rfloor^2
=-\frac{N(N-1)}{2}\int_{-1}^{1}|[(1-x^2)^\frac{N-2}{2} G]^{(\frac{N-2}{2})}|^2 dx
+\frac{N!}{\alpha}\int_{-1}^{1}(1-x^2)^{\frac{N}{2}-1}G^2 dx+ NI_N,
\end{equation}
where
\begin{equation*}
I_N=(-1)^{\frac{N}{2}}\int_{-1}^{1}(1-x^2)^{\frac{N-2}{2}}G^2[(1-x^2)^{\frac{N-2}{2}}G]^{(N-1)} dx.
\end{equation*}

The main difficulty in the even-dimensional case is the cubic term $I_N$. The next proposition provides an integration-by-parts formula that rewrites $I_N$ in a controllable form.

\begin{proposition}\label{newIBP}
Let $N\geq 4$ be an even integer. For $G=(1-x^2)u'$, we define $\hat{G}_j$ recursively by $\hat{G}_0:=G'$ and 
\begin{equation*}
\hat{G}_j:=-(1-x^2)\hat{G}_{j-1}''+(N+2)x\hat{G}_{j-1}'+(j+1)(N-j)\hat{G}_{j-1}   
\end{equation*}
for $1\leq j\leq \frac{N}{2}-1$. 
Then 
\begin{equation}\label{eq hat GM}
\hat G_{\frac{N}{2}-1}=(-1)^{\frac{N}{2}-1}[(1-x^2)^{\frac{N-2}{2}} G]^{(N-1)},
\end{equation}
and	we have
\begin{equation}\label{IN IBP}
    I_N=\sum_{r=1}^{M}\sum_{d=0}^{M-r}C_{r,d}\int_{-1}^{1}(1-x^2)^{\frac{N}{2}-1+r}(G^{(r)})^2\hat G_{M-r-d} dx,
\end{equation}
where $M=\frac{N}{2}-1$ and
\begin{equation*}
    C_{r,d}=\binom{M}{r}\binom{M-r}{d}(r)_d(M+r)_d\frac{3(M+r)+d}{3(M+r)}.
\end{equation*}
Here
\begin{equation*}
(t)_k:=
\begin{cases}
    t(t+1)\cdots (t+k-1),&\qquad k\geq 1,\\
    1,&\qquad k=0
\end{cases}    
\end{equation*}
is the Pochhammer symbol for $t\in \mathbb{R}$ and $k\in \mathbb{N}$. 
\end{proposition}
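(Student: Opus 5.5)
We prove the two parts of Proposition \ref{newIBP} in order: first the operator identity \eqref{eq hat GM}, then the cubic identity \eqref{IN IBP}.

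\textbf{Step 1: the operator identity \eqref{eq hat GM}.} Set
\[
L_j:=-(1-x^2)\frac{d^2}{dx^2}+(N+2)x\frac{d}{dx}+(j+1)(N-j),
\]
so that $\hat G_{M}=L_M\cdots L_1 G'$. We would prove the more general statement
\[
L_j\cdots L_1 (f')=(-1)^j\bigl[(1-x^2)^j f\bigr]^{(2j+1)}\quad\text{for all } j\le M
\]
and all smooth $f$ with $f(\pm1)=0$; taking $j=M$ and $f=G$ gives \eqref{eq hat GM}. It is enough to check this on the basis $f=(1-x^2)\tilde F_k'$, equivalently $f=(1-x^2)C^{(N+1)/2}_{k-1}$. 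The reason is that both sides are polynomial-coefficient operators, and both are diagonalized on Gegenbauer polynomials of parameter $\frac{N+1}{2}$.

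The left side is an eigenvalue product. Since $\tilde F_k'$ satisfies \eqref{DE} with $\nu=\frac{N+1}{2}$ and degree $k-1$, we have
\[
L_j\tilde F_k'=\bigl((k-1)(k+N)+(j+1)(N-j)\bigr)\tilde F_k'=(k+j)(k+N-1-j)\,\tilde F_k'.
\]
For the right side, the Rodrigues formula \eqref{Rodrigues} applied to $(1-x^2)^{j+1}C_{k-1}^{(N+1)/2}$ should produce the same product $\prod_{i=1}^{j}(k+i)(k+N-1-i)$. We would verify this by induction on $j$, using the commutation relation
\[
\frac{d^2}{dx^2}\bigl[(1-x^2)\,\cdot\,\bigr]=\bigl(\text{second-order operator}\bigr)\frac{d^2}{dx^2}.
\]
A density argument then extends the identity to general $f$.

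\textbf{Step 2: reducing \eqref{IN IBP} to a polynomial identity.} By Step 1,
\[
I_N=-\int(1-x^2)^{M}G^2\,\hat G_M\,dx .
\]
We rewrite this by integrating by parts with respect to the self-adjoint structure of $L_j$. With the weight $w=(1-x^2)^{N/2+1}$, the operator $L_j$ satisfies
\[
L_j h=-w^{-1}\bigl(w h'\bigr)'+c_j h .
\]
Peeling one operator $L_M$ off $\hat G_M=L_M\hat G_{M-1}$ and moving it onto the product $(1-x^2)^M G^2$ produces terms of three kinds: $(G')^2$ terms, $GG''$ terms, and $G^2$ terms, each multiplied by $\hat G_{M-1}$. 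The $GG''$ terms are converted again into $(G')^2$ terms and derivatives hitting $\hat G_{M-1}$; the latter are reabsorbed by recognising $L_{M-1}$. Iterating this procedure expresses $I_N$ as a combination of
\[
\int(1-x^2)^{M+r}\bigl(G^{(r)}\bigr)^2\hat G_{s}\,dx .
\]
Since the final formula has a closed form, we would not track this expansion directly. Instead we prove \eqref{IN IBP} by comparing both sides as trilinear, or rather symmetric-bilinear-in-$G$ and linear-in-$\hat G$, forms. Polarize: replace $G^2$ by $G_1G_2$, and take $G_1,G_2,G_3$ in a polynomial basis, with $G_3$ the function that builds the $\hat G$ factors. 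Then \eqref{IN IBP} becomes an identity among explicit integrals of monomials against Beta weights. Equivalently, both sides are differential operators acting on $\hat G_0=G'$, and it suffices to show they have the same symbol after integration by parts. We would set up the recursion in $M$ (that is, $N\mapsto N+2$) that relates $C_{r,d}$ at level $M$ to those at level $M-1$. The Pochhammer and binomial form of $C_{r,d}$ then reduces this to a Vandermonde/Chu-type convolution, and the factor $\frac{3(M+r)+d}{3(M+r)}$ splits into two Pochhammer terms, each summed separately.

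\textbf{Main obstacle.} The hard step is the combinatorial recursion for $C_{r,d}$. Each peeling step creates lower-order boundary-free terms whose coefficients must telescope. The paper also notes that the coefficients are non-unique: one may add integrals that vanish identically by integration by parts. So a direct coefficient match could fail unless we fix a canonical normal form. Our first attempt would be to verify \eqref{IN IBP} for $G$ equal to low-degree polynomials and small $M$ (namely $M=1,2,3$, matching the known cases $N=4,6,8$), then prove the general Pochhammer sum identity by generating functions in $d$.
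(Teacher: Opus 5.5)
\textbf{There is a genuine gap in Step 2.} It never states the identity it will prove, and the one concrete reduction it proposes is false.

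You polarize with $G_1,G_2$ in the squared slots and $G_3$ generating the $\hat G$ factors. That trilinear identity already fails for $N=4$. There $M=1$ and $C_{1,0}=1$, so with $w=1-x^2$, \eqref{IN IBP} reads $\int_{-1}^1 wG^2(wG)'''\,dx=\int_{-1}^1 w^2(G')^3\,dx$.
\begin{itemize}
\item Take $G_1=G_2=w$ and $G_3=xw$. The polarized left side is $\int_{-1}^1 w^3(xw^2)'''\,dx=\int_{-1}^1(1-x^2)^3(60x^2-12)\,dx=-\frac{512}{105}$.
\item The polarized right side is $\int_{-1}^1 w^2\cdot 4x^2(1-3x^2)\,dx=0$.
\item Each of the other two placements of $xw$ gives $\frac{256}{105}$ on the left and $0$ on the right, so only the sum over the three placements vanishes.
\end{itemize}
Hence \eqref{IN IBP} holds only for the fully symmetrized cubic form. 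No coefficient matching on your non-symmetric polarization can succeed, whether by Chu--Vandermonde or by generating functions in $d$. This is why the paper works with the cyclic combination $\mathcal C(f,g,h)=\mathcal H_B(f,g;h)+\mathcal H_A(g,h;f)+\mathcal H_A(h,f;g)$, built from two different coefficient arrays. The splitting $(M+r)_d\frac{3(M+r)+d}{3(M+r)}=\frac23(M+r)_d+\frac13(M+r+1)_d$ that you noticed reflects this cyclic structure, $C_{r,d}=\frac{2A_{r,d}+B_{r,d}}{3}$. It is not a pair of sums to be evaluated separately.

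\textbf{No normal form is supplied.} You correctly name non-uniqueness of integration-by-parts representations as the real obstacle, but without a normal form, ``same symbol after integration by parts'' has no meaning for variable-coefficient cubic forms on $(-1,1)$. The paper's mechanism is as follows.
\begin{itemize}
\item It works in $t$ with $x=\tanh t$ and $T=wD$.
\item It proves the derivative-transfer identity for $\mathcal H_A,\mathcal H_B$.
\item It shows the defect $\mathcal H_B-\mathcal H_A=M(M-1)\Phi(f',g',h')$ is fully symmetric, via the hypergeometric exchange identity $E_q(\alpha,\beta)=E_q(\beta,\alpha)$.
\item Together these give $\mathcal C(Tf,g,h)+\mathcal C(f,Tg,h)+\mathcal C(f,g,Th)=0$. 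So $\mathcal C$ has a canonical form $\int h\sum c_{p,q}T^pf\,T^qg\,dt$ with \emph{constant} $c_{p,q}$.
\item The symbol is read off on truncated exponentials $e^{2at}$. There the cells with $d\ge1$ are $O(w)$, and the $d=0$ cells cancel by the finite cyclic identity.
\end{itemize}
Your recursion in $M$ does not replace this. The operators defining $\hat G_j$ depend on $N$ through $(N+2)x$ and $(j+1)(N-j)$, and the weights change too, so there is no evident link between consecutive levels. Checking $N=4,6,8$ says nothing about general $N$.

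\textbf{Step 1 also needs repair.} Your ``more general statement'' is false for $j<M$ once $N\ge6$. For $N=6$ and $j=1$, $L_1f'=-(1-x^2)f'''+8xf''+10f'$, whereas $-[(1-x^2)f]'''=-(1-x^2)f'''+6xf''+6f'$. So the induction on $j$ cannot run. The correct intermediate identity is $L_j\cdots L_1(f')=(-1)^jD^{j+1}\bigl[w^{j-M}D^j(w^Mf)\bigr]$, which collapses to \eqref{eq hat GM} only at $j=M$. Alternatively, check $j=M$ directly on $f=F_k$ by orthogonality. Your test functions $(1-x^2)\tilde F_k'$ do not have derivatives that are eigenfunctions of $L_j$.
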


The proof of Proposition \ref{newIBP} is lengthy and technical, so we defer it to Appendix \ref{NewIBPproof}. 

With Proposition \ref{newIBP}, the following proposition establishes pointwise estimates of $\hat{G}_j$. In this way, $I_N$ and the seminorm $\lfloor G\rfloor^2$ can be estimated. 
\begin{proposition}\label{Gfloor est}
For every even $N\geq 4$, we have
\begin{equation*}
    \hat{G}_j\leq \frac{(j+1)!(N-1)!}{(N-j-1)!\alpha}
\end{equation*}
for any $0\leq j\leq \frac{N}{2}-1$. Consequently, we have
\begin{equation*}
    I_N\leq \frac{(N-1)!}{\alpha}\sum_{r=1}^{M}\sum_{d=0}^{M-r}\frac{(M-r-d+1)!}{(M+r+d+1)!}C_{r,d}\sum_{k=1}^{\infty}\left(\prod_{i=0}^{r-1}(\lambda_k-\lambda_i )\right) b_k^2
\end{equation*}
and hence
\begin{equation*}
    \lfloor G\rfloor^2\leq -\frac{N(N-1)}{2}\int_{-1}^{1}\Big|[(1-x^2)^\frac{N-2}{2} G]^{(\frac{N-2}{2})}\Big|^2 dx+\frac{1}{\alpha}\sum_{k=1}^{\infty}R_{N,k}b_k^2,
\end{equation*}
where $M=\frac{N}{2}-1$ and
\begin{equation}\label{RNk}
    R_{N,k}=N!+N!\sum_{r=1}^{M}\sum_{d=0}^{M-r}\frac{(M-r-d+1)!}{(M+r+d+1)!}C_{r,d}\prod_{i=0}^{r-1}(\lambda_k-\lambda_i ).
\end{equation}

\end{proposition}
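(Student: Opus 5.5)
The plan is to first prove the pointwise bounds on $\hat G_j$ by a downward induction on $j$, starting at $j=M=\frac N2-1$ and using a maximum principle for the degenerate operator that defines the recursion. Then I would plug these bounds into the identity \eqref{IN IBP} of Proposition \ref{newIBP}, evaluate the resulting weighted Dirichlet-type integrals through the Gegenbauer expansion \eqref{Gexpand}, and finally insert everything into \eqref{Gfloor I}.

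\emph{Base case $j=M$.} By \eqref{eq hat GM} and equation \eqref{G},
\begin{equation*}
\hat G_M=(-1)^{\frac N2-1}[(1-x^2)^{\frac{N-2}{2}}G]^{(N-1)}=\frac{1}{\alpha}\Big((N-1)!-\frac{(N-1)!\sqrt\pi\,\Gamma(\frac N2)}{\Gamma(\frac{N+1}{2})\gamma}e^{Nu}\Big)\le\frac{(N-1)!}{\alpha}.
\end{equation*}
This agrees with the claimed bound, since $(M+1)!/(N-M-1)!=1$.

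\emph{Inductive step.} Suppose $\hat G_j\le B_j:=\frac{(j+1)!(N-1)!}{(N-j-1)!\alpha}$ with $1\le j\le M$. Since $u$ is smooth on $\mathbb S^N$, each $\hat G_{j-1}$ is smooth on $[-1,1]$. Let $x_0\in[-1,1]$ be a maximum point of $\hat G_{j-1}$; if the maximum is $\le0$ there is nothing to prove.
\begin{itemize}
\item If $x_0\in(-1,1)$, then $\hat G_{j-1}'(x_0)=0$ and $\hat G_{j-1}''(x_0)\le0$.
\item If $x_0=1$, the term $(1-x^2)\hat G_{j-1}''$ vanishes and $\hat G_{j-1}'(1)\ge0$, so $(N+2)x\hat G_{j-1}'\ge0$ there.
\item If $x_0=-1$, then $\hat G_{j-1}'(-1)\le0$, so again $(N+2)x\hat G_{j-1}'\ge0$.
\end{itemize}
In every case the recursion gives
\begin{equation*}
B_j\ge\hat G_j(x_0)\ge (j+1)(N-j)\max\hat G_{j-1}.
\end{equation*}
Hence $\max \hat G_{j-1}\le B_j/((j+1)(N-j))=\frac{j!(N-1)!}{(N-j)!\alpha}=B_{j-1}$, which closes the induction. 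The only delicate point is handling the degenerate endpoints, and it is settled by the sign of the first-order term as above.

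\emph{Bound for $I_N$.} Every coefficient $C_{r,d}$ is nonnegative, as is each weight $(1-x^2)^{\frac N2-1+r}(G^{(r)})^2$. So in \eqref{IN IBP} I can replace $\hat G_{M-r-d}$ by $B_{M-r-d}$. Since $N-(M-r-d)-1=M+r+d+1$, this is exactly the factor $\frac{(N-1)!}{\alpha}\frac{(M-r-d+1)!}{(M+r+d+1)!}$.

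It remains to show
\begin{equation*}
\int_{-1}^1(1-x^2)^{\frac N2-1+r}(G^{(r)})^2dx=\sum_k\prod_{i=0}^{r-1}(\lambda_k-\lambda_i)\,b_k^2 .
\end{equation*}
\begin{itemize}
\item By \eqref{201}, $F_k^{(r)}$ is a multiple of $C_{k-r}^{\nu+r}$ with $\nu=\frac{N-1}{2}$. These polynomials are orthogonal for the weight $(1-x^2)^{\nu+r-\frac12}=(1-x^2)^{\frac N2-1+r}$, so the integral diagonalizes in $k$.
\item The diagonal terms come from iterating the one-step identity
\begin{equation*}
\int(1-x^2)^{\mu+\frac12}(f')^2=\int(1-x^2)^{\mu-\frac12}f\cdot(-\mathcal L_\mu f),
\end{equation*}
which follows from integrating by parts with \eqref{DE}. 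At level $i$ the function $F^{\nu+i}_{k-i}$ has eigenvalue $(k-i)(k-i+2\nu+2i)=(k-i)(k+N-1+i)=\lambda_k-\lambda_i$.
\item The case $r=0$ is just the definition of $b_k$.
\end{itemize}

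\emph{Seminorm estimate.} Substituting the bound on $I_N$ into \eqref{Gfloor I}, and using $\int(1-x^2)^{\frac N2-1}G^2dx=\sum_k b_k^2$ (note $a_0=0$), gives the stated inequality for $\lfloor G\rfloor^2$ with $R_{N,k}$ as in \eqref{RNk}. The main obstacle, if any, is bookkeeping the normalization constants in the iterated orthogonality. Since only the eigenvalue products survive, I expect this to be routine.
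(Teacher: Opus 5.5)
Your proposal is correct and follows the paper's proof. The paper likewise uses downward induction by the maximum principle on the recursion; your endpoint case, argued directly in $x$ from the smoothness of $\hat G_{j-1}$ on $[-1,1]$, is equivalent to the paper's expansion in $r=\sqrt{1-x^2}$. It then substitutes the bounds into \eqref{IN IBP} using $C_{r,d}\ge 0$ and the spectral identities for $\int_{-1}^1(1-x^2)^{\frac N2-1+r}(G^{(r)})^2dx$, and then into \eqref{Gfloor I}; your iterated-orthogonality derivation of those identities supplies a step the paper only asserts.
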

\begin{proof}

Set $C_j:=\frac{(j+1)!(N-1)!}{(N-j-1)!}$. Then we have $C_{j+1}=(j+2)(N-j-1)C_{j}$.

First, by \eqref{G} and \eqref{eq hat GM}, we have
\begin{equation*}
    \alpha \hat G_{\frac{N}{2}-1}=(N-1)!-\frac{(N-1)!\sqrt{\pi}\Gamma(\frac{N}{2})}{\Gamma(\frac{N+1}{2})\gamma}e^{Nu}\leq (N-1)!=C_{\frac{N}{2}-1}.
\end{equation*}

Now suppose that for some $0\leq j\leq \frac{N}{2}-2$, $\hat{G}_{j+1}\leq \frac{C_{j+1}}{\alpha}$. Then we have
\begin{equation*}
    (1-x^2)\hat{G}_{j}''-(N+2)x\hat{G}_{j}'-(j+2)(N-j-1)\hat{G}_{j}=-\hat{G}_{j+1}\geq -\frac{C_{j+1}}{\alpha}.
\end{equation*}

We claim that
\begin{equation*}
    \hat{G}_{j}\leq \frac{C_j}{\alpha}.
\end{equation*}

To prove the claim, we denote $M_j:=\max\limits_{-1\leq x\leq 1}\hat{G}_{j}(x)$ and consider separately an interior maximum and an endpoint maximum.

{\em Case 1:} If $M_j$ is attained at some point $x_0\in (-1,1)$, then
\begin{equation*}
    \hat{G}_{j}'(x_0)=0,\ \hat{G}_{j}''(x_0)\leq 0
\end{equation*}
and the desired estimate follows. 

{\em Case 2:} If $M_j$ is attained at $1$ or $-1$, without loss of generality, suppose there exists a sequence $x_n\to 1^-$ such that
\begin{equation*}
    M_j=\lim\limits_{n\to\infty}\hat{G}_{j}(x_n).
\end{equation*}		
In this case, we let $r=\sqrt{1-x^2}$ and write
\begin{equation*}
    G(x)=\Bar{G}(r),\ \hat{G}_{j}(x)=H_j(r)\text{ and }u(x)=\Bar{u}(r)\text{ for }r\in [0,1),\ x\in (0,1].
\end{equation*}
Then we can extend $\Bar{u}(r)$ to be a smooth even function on $(-\frac{1}{2},\frac{1}{2})$. 

Direct calculation yields that $H_j$
is an even function with respect to $r$ and can be extended to be a smooth function near $r=0$. Now we can write
\begin{align*}
    H_j(r)
    &=c_1+c_2 r^2+c_3 r^4+O(r^6),\\
    xH_j'(x)&=-2c_2+O(r^2),\\
    (1-x^2)H_j''(x)&=(-2c_2+8c_3)r^2+O(r^4)
\end{align*}
near $r=0$. Since $H_j(r)$ attains its local maximum at $r=0$, we have $c_2\leq 0$ and hence
\begin{equation*}
    \lim\limits_{x\to 1}xH_j'(x)\geq 0,\ \lim\limits_{x\to 1}(1-x^2)H_j''(x)=0.
\end{equation*}

Then we obtain $M_j\leq \frac{C_j}{\alpha}$ and hence the claim is proved.

Substituting the estimates of $\hat{G}_{j}$ into Proposition \ref{newIBP} and combining with \eqref{Gfloor I}, we get the desired estimates for $I_N$ and $\lfloor G\rfloor^2$.
\end{proof}

\subsection{Seminorm estimates for $N=3,5,7$}\label{seminorm estimates odd}

In this subsection, we will establish the estimates for seminorm
\begin{equation*}
\lfloor G\rfloor^2:=\int_{-1}^{1} (1-x^2)^{\frac{N}{2}-1}GP_{r,N} G dx
\end{equation*}
for $N=3,5,7$.

Testing Lemma \ref{PNwf} against $G$ and using \eqref{axialodd} and \eqref{xGQ_NG}, we get
\begin{equation*}
\lfloor G\rfloor^2=-\frac{N(N-1)}{2}\int_{-1}^{1} (1-x^2)^{\frac{N}{2}-1}G\mathcal{Q}_{N} G dx+\frac{N!}{\alpha}\int_{-1}^{1}(1-x^2)^{\frac{N}{2}-1}G^2+N I_N,
\end{equation*}
where
\begin{equation*}
I_N:=\int_{-1}^{1}(1-x^2)^{\frac{N}{2}-1}G^2P_{r,N}u dx.
\end{equation*}

There is no direct analogue of Proposition \ref{Gfloor est} when $N$ is odd. Instead, the following proposition gives an estimate for $I_N$ for $N=3,5,7$. 
\begin{proposition}\label{seminormodd}
For $N=3,5,7$, we have
\begin{equation*}
I_N\leq \frac{\theta_N}{\alpha}\int_{-1}^{1}(1-x^2)^{\frac{N}{2}-1}G\mathcal{Q}_{N}Gdx-\frac{\theta_N(N-2)!}{\alpha}\int_{-1}^{1}(1-x^2)^{\frac{N}{2}-1}G^2dx,
\end{equation*}
where $\theta_3=\frac{2}{3}$, $\theta_5=\frac{136}{75}$, $\theta_7=\frac{12}{5}$.
Consequently, we have
\begin{equation*}
\lfloor G\rfloor^2\leq \left(\frac{N\theta_N}{\alpha}-\frac{N(N-1)}{2}\right)\int_{-1}^{1}(1-x^2)^{\frac{N}{2}-1}G\mathcal{Q}_{N}Gdx+\frac{N!-N\theta_N(N-2)!}{\alpha}\int_{-1}^{1}(1-x^2)^{\frac{N}{2}-1}G^2dx.
\end{equation*}
\end{proposition}

The remaining part of this subsection is devoted to proving it by descending the nonlocal operator to a three-dimensional nonlocal difference quotient identity. We first establish pointwise bounds for $G'$ and $P_{r,N-2j}u$, where we recall the operator $P_{r,N-2j}$ is defined in a similar way to \eqref{def P rN}.

\begin{lemma}\label{GPbound}
Let $N\geq 3$ be an odd integer. Then we have 
\begin{equation*}
    \sup_{[-1,1]} G'\leq
\begin{cases}
\frac{1}{\alpha},&\qquad N=3,\\
\frac{5N-8}{N(N-1)\alpha},&\qquad N\geq 5.
\end{cases}
\end{equation*}

Furthermore, for any $0 \leq j\leq \frac{N-3}{2}$, we have
\begin{equation*}
P_{r,N-2j}u\geq -\frac{(N-2j-1)!}{\alpha}.
\end{equation*}

% \begin{equation*}
%     \hat{G}_j\leq \frac{(j+1)!(N-1)!}{(N-j-1)!\alpha}
% \end{equation*}
% for any $0\leq j\leq \frac{N-3}{2}$. 
\end{lemma}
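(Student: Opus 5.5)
The plan is to prove both bounds with positivity arguments that start from the equation \eqref{axialodd}. We do not use local second-order ODEs, which are unavailable because $P_{r,N}$ is nonlocal. Instead we use explicit integral (Gegenbauer/Funk--Hecke) representations whose kernels can be controlled in sign.

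\emph{Step 1: the bound on $P_{r,N-2j}u$.} For $j=0$ the bound is immediate. By \eqref{axialodd},
\begin{equation*}
\alpha P_{r,N}u=-(N-1)!+\frac{(N-1)!\sqrt{\pi}\Gamma(\frac N2)}{\Gamma(\frac{N+1}{2})\gamma}e^{Nu}\geq -(N-1)!.
\end{equation*}
For $j\geq 1$ we descend one step at a time, from $N-2j$ to $N-2j-2$. We look for a representation
\begin{equation*}
P_{r,N-2j-2}u=\mathcal{T}_j\big(P_{r,N-2j}u\big)+c_j,
\end{equation*}
where $\mathcal T_j$ has a nonnegative kernel and $\mathcal{T}_j 1=\frac{(N-2j-3)!}{(N-2j-1)!}$, and $c_j$ is a multiple of $\int u$. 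The constant $c_j$ can be normalized away, since $P$ kills constants.

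To produce $\mathcal T_j$, we compare eigenvalues on the Gegenbauer expansion of $u$. The dimension-lowering step should be realized through the product formula (Lemma \ref{prodform}) or the classical connection formula expressing $C^{\nu-1}_k$ in terms of $C^{\nu}_{k}$ and $C^{\nu}_{k-2}$. That formula has positive coefficients, and the eigenvalue ratio $\frac{\Gamma(k+N-2j-2)}{\Gamma(k+N-2j)}$ is completely monotone in $k$. Together these should give an integral operator with positive kernel. Applying $\mathcal T_j$ to $P_{r,N-2j}u\geq -\frac{(N-2j-1)!}{\alpha}$ then yields $P_{r,N-2j-2}u\geq -\frac{(N-2j-3)!}{\alpha}$, and induction on $j$ closes the argument. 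If the positivity of the kernel fails, the fallback is the maximum-principle argument of Proposition \ref{Gfloor est}. There we treat each local factor $-(1-x^2)\frac{d^2}{dx^2}+Nx\frac{d}{dx}+k(N-k-1)$, $k\geq1$, which has a positive zeroth-order coefficient, by considering interior and endpoint maxima, the latter via the variable $r=\sqrt{1-x^2}$.

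\emph{Step 2: the bound on $G'$.} We expand $G$ as in \eqref{Gexpand} and use \eqref{F1 beta}, \eqref{bbk} and the normalization $\tilde F_k'=\frac{N}{\lambda_k}F_k'$. This gives
\begin{equation*}
G'(x)=\beta-\frac{c_N}{\alpha}\int_{-1}^1(1-y^2)g(y)\,\mathcal{K}(x,y)\,dy,
\end{equation*}
where $\mathcal K(x,y)=\sum_{k\geq2}w_k\,\tilde F_k'(x)\tilde F_k'(y)$ for explicit weights $w_k$ behaving like $(2k+N-1)/\lambda_k$ times constants. We then bound $\mathcal K$ from below, using Lemma \ref{prodform} and the kernel facts \eqref{KNxx}--\eqref{KNxy}. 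We combine this lower bound with $\int g=1$, $\int xg=0$ and $\int(1-y^2)g=a$, and with $\beta=\frac{1}{\alpha}\big(1-\frac{N+1}{N}a\big)$ from \eqref{a}. This should yield $\sup G'\leq\frac1\alpha$ when $N=3$. For $N\geq 5$ it should yield the weaker $\frac{5N-8}{N(N-1)\alpha}$, where Theorem \ref{bk}, namely $a\geq\frac{N-2}{N-1}$, is used to absorb the $a$-dependence.

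The main obstacle is the sign control of the kernels, both $\mathcal T_j$ in Step 1 and $\mathcal K$ in Step 2. The first thing to try is to write each kernel via the Funk--Hecke/product formula of Lemma \ref{prodform} as an average of a one-variable generating function, and then check the sign of that generating function explicitly.
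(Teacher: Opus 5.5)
Your two steps aim at the right bounds, but in each one the decisive positivity argument is missing, and the mechanisms you propose for it do not work.

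\textbf{Step 1.} Everything rests on the positivity of $\mathcal T_j$, which you never establish. The connection formula you cite, $C_k^{\nu-1}=\frac{\nu-1}{k+\nu-1}\bigl(C_k^{\nu}-C_{k-2}^{\nu}\bigr)$, has a \emph{negative} coefficient; compare the expansion of $F_k^1$ in $F_k^2,F_{k-2}^2$ in the proof of Lemma \ref{Q5Q3}. Moreover, positive expansion coefficients plus completely monotone eigenvalue ratios would not by themselves give pointwise positivity of a kernel built from oscillating polynomials.

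Your fallback fails for a different reason. The factors $-(1-x^2)\frac{d^2}{dx^2}+Nx\frac{d}{dx}+k(N-k-1)$ of $P_{r,N}$ involve the $N$-dimensional Laplacian. Peeling them off therefore never produces $P_{r,N-2}u$, which is built from $-(1-x^2)\frac{d^2}{dx^2}+(N-2)x\frac{d}{dx}$.

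The missing idea, which is what the paper uses, is this: although each $P_{r,n}$ is nonlocal, consecutive ones are linked by a \emph{local} second-order operator. For every odd $n$ with $3\le n\le N-2$,
\[
-(1-x^2)(P_{r,n}u)''+2(n+1)x(P_{r,n}u)'+n(n+1)P_{r,n}u=P_{r,n+2}u .
\]
Since $n(n+1)=\frac{(n+1)!}{(n-1)!}$, the interior/endpoint minimum-principle argument of Proposition \ref{Gfloor est} turns $P_{r,n+2}u\ge-\frac{(n+1)!}{\alpha}$ into $P_{r,n}u\ge-\frac{(n-1)!}{\alpha}$. Your $\mathcal T_j$ is just the inverse of this operator, which matches your normalization $\mathcal T_j1=\frac{1}{n(n+1)}$ for the lower dimension $n$. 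Your opening remark that local second-order ODEs are unavailable is precisely what blocks you.

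\textbf{Step 2.} The architecture is the paper's: represent $G'$ against $(1-y^2)g$, bound the kernel from below, then use the moments of $g$ and $a\ge\frac{N-2}{N-1}$. However, your kernel is wrong. From \eqref{bbk} and \eqref{Fknorm} one gets $a_k=-\frac{2k+N-1}{\alpha N}A_k$, hence
\[
\frac1\alpha-G'(x)=\frac{1}{\alpha N^2}\int_{-1}^{1}(1-y^2)g(y)\sum_{k\ge1}\lambda_k(2k+N-1)\tilde F_k'(x)\tilde F_k'(y)\,dy,
\]
where the $k=1$ term is exactly $\frac1\alpha-\beta$. So the weights are $\lambda_k(2k+N-1)$, not $(2k+N-1)/\lambda_k$. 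Also, \eqref{KNxx}--\eqref{KNxy} are an identity and an \emph{upper} bound for the kernel with weights $2k+N-1$, so they cannot supply a lower bound.

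What is needed:
\begin{itemize}
\item Split $\lambda_k(2k+N-1)=\frac{\Gamma(k+1)}{\Gamma(k+N-1)}\cdot\frac{(2k+N-1)\Gamma(k+N)}{\Gamma(k)}$.
\item Write the first factor as $\frac{1}{(N-3)!}\int_0^1 r^k(1-r)^{N-3}\,dr$, and sum the rest via Lemma \ref{prodform} into the Poisson kernel of $\mathbb S^{N+2}$. The kernel is then manifestly nonnegative, which already gives $G'\le\frac1\alpha$ for $N=3$.
\item For $N\ge5$ you need a quantitative bound. Jensen in $t$ (convexity of $s\mapsto(1-2rs+r^2)^{-\frac{N+3}{2}}$) gives $\frac1\alpha-G'(x)\ge\frac{N-2}{N\alpha}\int_{-1}^{1}\frac{1-y^2}{(1-xy)^2}g(y)\,dy$.
\item The minorant $\frac{1-y^2}{(1-xy)^2}\ge1+2xy-3y^2$ reduces this to $\int g=1$, $\int yg=0$, $\int y^2g=1-a$. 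Together with $a\ge\frac{N-2}{N-1}$ this gives $\frac{5N-8}{N(N-1)\alpha}$.
\end{itemize}
Your closing paragraph points in this direction, but none of these steps is actually supplied.
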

\begin{proof}
We first prove the pointwise estimate for $G'$. For simplicity, we define
\begin{equation*}
\mathcal{A}:=\left(-(1-x^2)\frac{d^2}{dx^2}+(N+2)x\frac{d}{dx}+(\frac{N+1}{2})^2\right)^{\frac{1}{2}}-\frac{N+1}{2}.
\end{equation*}
Then $\mathcal{A}\tilde F_k'=(k-1)\tilde F_k'$.

We first claim that

\textbf{Claim 1:}
\begin{equation*}
\mathcal{L_N}G':=\prod_{l=2}^{N-1}(\mathcal{A}+l)G'=-P_{r,N}u.
\end{equation*}
Since $\{F_k\}$ forms a complete orthogonal basis in $L^2((-1,1);(1-x^2)^{\frac{N}{2}-1}dx)$, it suffices to prove the identity for $u=F_k$.

By \eqref{recurence3}, it follows that
\begin{equation*}
\begin{aligned}
\prod_{l=2}^{N-1}(\mathcal{A}+l)G'
=& \frac{\Gamma(k+N)}{\Gamma(k)}\left(-\frac{(k+N-1)(k+N)}{N(2k+N-1)}\tilde F_{k+1}'+\frac{k(k-1)}{N(2k+N-1)}\tilde F_{k-1}'\right)\\
=&-\frac{\Gamma(k+N)}{\Gamma(k)}F_k.
\end{aligned}
\end{equation*}

For $u=F_k$, the right-hand side equals
\begin{equation}
-P_{r,N}u=-\frac{\Gamma(k+N)}{\Gamma(k)}F_k
\end{equation}
by \eqref{Paneitzeigen}. Claim 1 follows.

Since
\begin{equation*}
\prod_{l=2}^{N-1}(\mathcal{A}+l)1=(N-1)!,     
\end{equation*}
by \eqref{axialodd}, we have
\begin{equation}\label{1alpha-G'}
\mathcal{L_N}\left(\frac{1}{\alpha}-G'\right)=\frac{(N-1)!\sqrt{\pi}\Gamma(\frac{N}{2})}{\alpha\Gamma(\frac{N+1}{2})\gamma}e^{Nu}.
\end{equation}

Define
\begin{equation*}
\mathcal K_N(x,y):=\sum_{k=1}^{\infty}\frac{\Gamma(k+1)}{\Gamma(k+N-1)}\frac{\tilde F_k'(x)\tilde F_k'(y)}{\int_{-1}^{1}(1-z^2)^{\frac{N}{2}}(\tilde F_k'(z))^2dz}
\end{equation*}

\textbf{Claim 2:} For any smooth function $f$, we have
\begin{equation*}
\mathcal{L}_N^{-1}f(x)=\int_{-1}^{1}(1-y^2)^{\frac{N}{2}}\mathcal K_N(x,y)f(y)dy.
\end{equation*}
Indeed, recall that $\tilde F_k'=F_{k-1}^{\frac{N+1}{2}}$ . Hence $\{\tilde F_k'\}_{k\ge1}$ forms a complete orthogonal
basis of $L^2\left((-1,1);(1-x^2)^{\frac{N}{2}}\,dx\right)$.

\begin{equation*}
\mathcal{L}_N \tilde F_k'=\prod_{l=2}^{N-1}(k+l-1)\tilde F_k'=\frac{\Gamma(k+N-1)}{\Gamma(k+1)}\tilde F_k'.
\end{equation*}

Thus, for any smooth $f$, we have
\begin{equation*}
f=\sum_{k=1}^{\infty}f_k\tilde F_k',   
\end{equation*}
with
\begin{equation*}
f_k=\frac{\int_{-1}^{1}(1-y^2)^{\frac{N}{2}}f(y)\tilde F_k'dy}{\int_{-1}^{1}(1-y^2)^{\frac{N}{2}}(\tilde F_k')^2dy}.    
\end{equation*}
Then we have
\begin{equation*}
\mathcal{L}_N^{-1}f=\sum_{k=1}^{\infty}
\frac{\Gamma(k+1)}{\Gamma(k+N-1)}f_k\tilde F_k',    
\end{equation*}
which gives Claim 2.

Applying Claim 2 to \eqref{1alpha-G'}, we have
\begin{equation*}
\frac{1}{\alpha}-G'=\frac{(N-1)!\sqrt{\pi}\Gamma(\frac{N}{2})}{\alpha\Gamma(\frac{N+1}{2})\gamma}\int_{-1}^{1}(1-y^2)^{\frac{N}{2}}\mathcal{K}_N(x,y)e^{Nu(y)}dy=\frac{(N-1)!\sqrt{\pi}\Gamma(\frac{N}{2})}{\alpha\Gamma(\frac{N+1}{2})}\int_{-1}^{1}(1-y^2)\mathcal{K}_N(x,y)g(y)dy.
\end{equation*}

Furthermore, by Lemma \ref{prodform} and the identity
\begin{equation*}
\frac{\Gamma(k+1)}{\Gamma(k+N-1)}=\frac{1}{(N-3)!}\int_0^1 r^{k}(1-r)^{N-3}dr,\qquad k\geq 1,
\end{equation*}
we can write $\mathcal K_N$ as 
\begin{equation*}
\mathcal K_N(x,y)=\frac{N+1}{2\pi (N-3)!}\int_{0}^{1}r(1-r)^{N-3}(1-r^2)\int_{-1}^{1}\frac{(1-t^2)^{\frac{N-1}{2}}}{\left[1-2r(xy+t\sqrt{(1-x^2)(1-y^2)})+r^2\right]^{\frac{N+3}{2}}}dtdr\geq 0.   
\end{equation*}

Since the function $s\mapsto (1-2rs+r^2)^{-\frac{N+3}{2}}$ is convex and 
\begin{equation*}
\int_{-1}^{1}\frac{\Gamma(\frac{N+2}{2})}{\sqrt{\pi}\Gamma(\frac{N+1}{2})}(1-t^2)^{\frac{N-1}{2}}dt=1,
\end{equation*}
by Jensen's inequality, we have
\begin{equation*}
\begin{aligned}
\mathcal K_N(x,y)
\geq& \frac{N+1}{2\pi (N-3)!}\frac{\sqrt{\pi}\Gamma(\frac{N+1}{2})}{\Gamma(\frac{N+2}{2})}\int_0^1\frac{r(1-r)^{N-3}(1-r^2)}{(1-2rxy+r^2)^{\frac{N+3}{2}}}dr\\
=& \frac{\Gamma(\frac{N+1}{2})}{2\sqrt{\pi} (N-3)!(N-1)\Gamma(\frac{N+2}{2})}\frac{1}{(1-xy)^2}.
\end{aligned}
\end{equation*}
It follows that
\begin{equation*}
\begin{aligned}
\frac{1}{\alpha}-G'(x)
=&\frac{(N-1)!\sqrt{\pi}\Gamma(\frac{N}{2})}{\alpha\Gamma(\frac{N+1}{2})}\int_{-1}^{1}(1-y^2)\mathcal{K}_N(x,y)g(y)dy\\
\geq &\frac{N-2}{N\alpha}\int_{-1}^{1}\frac{1-y^2}{(1-xy)^2}g(y)dy.
\end{aligned}
\end{equation*}

For $N=3$, it follows that $G'(x)\leq \frac{1}{\alpha}$ for all $x$. 

For $N\geq 5$, since 
\begin{equation*}
\frac{1-y^2}{(1-xy)^2}\geq 1+2xy-3y^2
\end{equation*}
and
\begin{equation*}
\int_{-1}^{1}gdy=1,\qquad \int_{-1}^{1}ygdy=0,\qquad \int_{-1}^{1}y^2gdy=1-a,
\end{equation*}
using $a\geq \frac{N-2}{N-1}$ from Theorem \ref{bk}, we have
\begin{equation*}
\frac{1}{\alpha}-G'\geq \frac{N-2}{N\alpha}\int_{-1}^{1}\frac{1-y^2}{(1-xy)^2}g(y)dy\geq \frac{N-2}{N\alpha}(1-3(1-a))\geq \frac{(N-2)(N-4)}{N(N-1)\alpha}.
\end{equation*}

Equivalently, 
\begin{equation*}
    G'(x)\leq \frac{5N-8}{N(N-1)\alpha}
\end{equation*}
as desired.

Next, we estimate $P_{r,N-2j}u$ for every $0\leq j\leq \frac{N-3}{2}$.

The estimate of $P_{r,N}$ follows from \eqref{axialodd}. Since for any $1\leq j\leq \frac{N-3}{2}$, 
\begin{equation*}
-(1-x^2)\frac{d^2}{dx^2}P_{r,N-2j}u+(2N-4j+2)x\frac{d}{dx}P_{r,N-2j}u+(N-2j)(N-2j+1)P_{r,N-2j}u=P_{r,N-2j+2}u,
\end{equation*}
the estimate of $P_{r,N-2j}u$ follows from a similar argument as in the proof of Proposition \ref{Gfloor est}.

\end{proof}

Next, we derive two important identities when $N=3$. For an arbitrary smooth function $H$, we define
\begin{equation*}
\delta H(x,y)=
\begin{cases}
\frac{H(x)-H(y)}{x-y},&\qquad x\neq y,\\
H'(x),&\qquad x=y.
\end{cases}
\end{equation*}

The following identity identifies the nonlocal quadratic form of $\delta H$ with the energy involving $\mathcal{Q}_{3}$. It serves as an integration-by-parts formula in three dimensions.
\begin{lemma}\label{deltaH2}
For any smooth function $H$, we have
\begin{equation*}
\int_{-1}^{1}\int_{-1}^{1}(1-x^2)^{\frac12}(1-y^2)^{\frac12}(\delta H)^2dxdy=2\pi \int_{-1}^{1}(1-x^2)^{\frac12}H(\mathcal{Q}_{3}H-H)dx.
\end{equation*}
\end{lemma}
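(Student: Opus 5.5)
Both sides are bilinear in $H$, so the plan is to verify the identity on the basis $\{F_k\}$ with $N=3$, where $F_k=F_k^{1}$ are Chebyshev polynomials of the second kind, normalized so that $F_k(1)=1$. Under $x=\cos\theta$ we have $F_k(\cos\theta)=\frac{\sin((k+1)\theta)}{(k+1)\sin\theta}$. For $N=3$ the operator $\mathcal{Q}_3$ acts diagonally: $\mathcal{Q}_3F_k=(k+1)F_k$, since $\lambda_k+1=k(k+2)+1=(k+1)^2$. The right-hand side, expanded as $H=\sum h_kF_k$ and using \eqref{Fknorm}, therefore becomes $2\pi\sum_k k\,h_k^2\|F_k\|^2$, with $\|F_k\|^2=\int_{-1}^1(1-x^2)^{1/2}F_k^2\,dx=\frac{\pi}{2(k+1)^2}$. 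So we must show that the bilinear form $B(H_1,H_2)=\iint (1-x^2)^{1/2}(1-y^2)^{1/2}\,\delta H_1\,\delta H_2\,dx\,dy$ is diagonal in $\{F_k\}$, with $B(F_k,F_k)=2\pi k\|F_k\|^2=\frac{\pi^2 k}{(k+1)^2}$.

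The key step is to expand $\delta F_k(x,y)$ in the product basis $F_i(x)F_j(y)$. It is cleaner to work with the unnormalized $U_k=(k+1)F_k$, for which the classical identity
\[
\frac{U_k(x)-U_k(y)}{x-y}=\sum_{i+j=k-1}U_i(x)\,U_j(y)
\]
holds. I would prove this by multiplying by $x-y$ and telescoping with the three-term recurrence $2xU_i=U_{i-1}+U_{i+1}$, which is the $N=3$ case of \eqref{recurence1}. An alternative is to use the generating function $\sum_k U_k t^k=(1-2xt+t^2)^{-1}$ and note that the difference of reciprocals factors as $2t(x-y)$ divided by the product of the two denominators.

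Orthogonality of $\{U_i\}$ in $L^2((1-x^2)^{1/2}dx)$, with $\|U_i\|^2=\pi/2$, then gives
\[
B(U_k,U_l)=\sum_{i+j=k-1}\ \sum_{i'+j'=l-1}\delta_{ii'}\delta_{jj'}\Big(\frac{\pi}{2}\Big)^2=\delta_{kl}\,k\,\frac{\pi^2}{4}.
\]
Converting to $F_k=U_k/(k+1)$ yields $B(F_k,F_k)=\frac{\pi^2k}{4(k+1)^2}$. I need to compare this with the right-hand side, $2\pi k\|F_k\|^2=2\pi k\cdot\frac{\pi}{2(k+1)^2}=\frac{\pi^2 k}{(k+1)^2}$. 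The two differ by a factor of $4$, so the normalization conventions have to be checked carefully.

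Rechecking: $\int_{-1}^1\sqrt{1-x^2}\,U_i^2\,dx=\int_0^\pi\sin^2((i+1)\theta)\,d\theta=\pi/2$, which confirms $\|U_i\|^2=\pi/2$ and $\|F_k\|^2=\frac{\pi}{2(k+1)^2}$. The mismatch therefore has to be resolved either by a different reading of $\mathcal{Q}_3$ or by the paper's intended constant. The step I expect to be the main obstacle is precisely this bookkeeping of constants. In the writeup I would recompute $\|F_k\|^2$ from \eqref{Fknorm} at $N=3$, which gives $\frac{4\Gamma(3/2)^2k!}{(k+1)!(2k+2)}=\frac{\pi}{2(k+1)^2}$, consistent with the value above, and then adjust the stated prefactor only if the computation forces it. The structural content — diagonalization via the difference-quotient identity for $U_k$, yielding eigenvalue weight $k=(\mathcal{Q}_3-1)$ on $F_k$ — is what gives the form $H(\mathcal{Q}_3H-H)$.

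Finally, I would extend from polynomials to smooth $H$ by density. Both sides are continuous in the $H^{1/2}$-type norm $\sum_k k\,h_k^2\|F_k\|^2$, and the expansion of a smooth $H$ converges rapidly, so the identity passes to the limit.
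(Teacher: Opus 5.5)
\textbf{Gap: a dropped factor of $2$.} Your strategy is the same as the paper's, but the difference-quotient identity you quote for $U_k$ is missing a factor of $2$. The correct identity is
\[
\frac{U_k(x)-U_k(y)}{x-y}=2\sum_{i+j=k-1}U_i(x)\,U_j(y).
\]
The case $k=1$ already shows this: $U_1=2x$ gives $\delta U_1=2$, while $U_0(x)U_0(y)=1$.

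Both derivations you sketch actually produce the $2$. The generating-function computation gives $2t(x-y)$ over the product of the two denominators. Telescoping with $2xU_i=U_{i-1}+U_{i+1}$ gives $2(x-y)\sum_{i+j=k-1}U_i(x)U_j(y)=U_k(x)-U_k(y)$.

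Because of the dropped factor, your value $B(F_k,F_k)=\frac{\pi^2k}{4(k+1)^2}$ is off by exactly the factor $4$ you then notice. As written, the proposal therefore does not prove the statement. It ends with an unresolved discrepancy and suggests the lemma's constant $2\pi$ might need adjusting, which it does not.

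\textbf{The fix.} With the factor restored, orthogonality gives $B(U_k,U_l)=4\cdot k\cdot(\pi/2)^2\,\delta_{kl}=k\pi^2\delta_{kl}$. Hence $B(F_k,F_l)=\frac{\pi^2k}{(k+1)^2}\delta_{kl}$. This agrees exactly with your correct evaluation of the right-hand side, $2\pi k\|F_k\|^2=\frac{\pi^2k}{(k+1)^2}$, which uses $\mathcal{Q}_3F_k=(k+1)F_k$ and $\|F_k\|^2=\frac{\pi}{2(k+1)^2}$.

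The corrected argument is then essentially the paper's proof. The paper writes $\delta F_k^1=\frac{2}{k+1}S_{k-1}$ with $S_{k-1}=\sum_{i+j=k-1}U_i(x)U_j(y)$, and checks the polarized identity on pairs $F_k^1,F_l^1$. Your closing density argument from polynomials to smooth $H$ is a reasonable addition that the paper leaves implicit.
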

\begin{proof}
In fact, we will prove the following polarized version:
\begin{equation}\label{deltaH1H2}
\int_{-1}^{1}\int_{-1}^{1}(1-x^2)^{\frac12}(1-y^2)^{\frac12}\delta H_1 \delta H_2dxdy=2\pi \int_{-1}^{1}(1-x^2)^{\frac12}H_1(\mathcal{Q}_{3}H_2-H_2)dx.
\end{equation}
Since both sides are bilinear forms in $(H_1,H_2)$, it suffices to prove it for $H_1=F_k^1$ and $H_2=F_l^1$. For $m\geq 0$, we define
\begin{equation*}
S_m(x,y)=\sum_{i+j=m}(i+1)(j+1)F_i^1(x)F_j^1(y)
\end{equation*}
and $S_m=0$ for $m<0$. By \eqref{recurence1}, we have
\begin{equation*}
\delta F_k^1=\frac{2}{k+1}S_{k-1}.    
\end{equation*}

By the orthogonality, we have
\begin{equation*}
\int_{-1}^{1}\int_{-1}^{1}(1-x^2)^{\frac12}(1-y^2)^{\frac12}S_{k-1}S_{l-1}dxdy=\frac{\pi^2}{4}k\delta_{kl}    
\end{equation*}
and hence
\begin{equation*}
\int_{-1}^{1}\int_{-1}^{1}(1-x^2)^{\frac12}(1-y^2)^{\frac12}\delta F_k^1\delta F_l^1 dxdy=\frac{\pi^2k}{(k+1)^2}\delta_{kl}    
\end{equation*}

On the other hand, we have
\begin{equation*}
2\pi \int_{-1}^{1}(1-x^2)^{\frac12}F_k^1(\mathcal{Q}_{3}F_l^1-F_l^1)dx=\frac{\pi^2k}{(k+1)^2}\delta_{kl}.
\end{equation*}
Hence, \eqref{deltaH1H2} follows and we complete the proof.
\end{proof}

Define the trilinear form
\begin{equation*}
\mathcal{D}(H_1,H_2,H_3)=\int_{-1}^{1}\int_{-1}^{1}(1-x^2)^{\frac12}(1-y^2)^{\frac12}\delta H_1 \delta H_2\delta H_3dxdy
\end{equation*}

The following lemma is a trilinear counterpart of Lemma \ref{deltaH2}.
\begin{lemma}
For smooth $u_i$ and $G_i=(1-x^2)u_i'$, $i=1,2,3$, we have
\begin{equation}\label{DG1G2G3}
\mathcal{D}(G_1,G_2,G_3)=\pi \int_{-1}^{1}\sqrt{1-x^2}[P_{r,3}u_1 G_2G_3+P_{r,3}u_2 G_1G_3+P_{r,3}u_3 G_1G_2] dx.
\end{equation}
\end{lemma}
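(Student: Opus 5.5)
Both sides of \eqref{DG1G2G3} are trilinear in $(u_1,u_2,u_3)$, so the plan is to reduce to a spectral check, exactly as in the proof of Lemma \ref{deltaH2}. By density and the completeness of $\{F_k^1\}$ in $L^2((-1,1);(1-x^2)^{1/2}dx)$, it suffices to take $u_i=F_{k_i}^1$ with $k_i\ge 1$; constants give $G_i=0$ and $P_{r,3}u_i=0$, so both sides vanish. Then $G_i=(1-x^2)(F_{k_i}^1)'$ is given explicitly by \eqref{recurence2} with $N=3$:
\[
G_i=\frac{k_i(k_i+2)}{2k_i+2}\bigl(F^1_{k_i-1}-F^1_{k_i+1}\bigr),
\]
and $P_{r,3}u_i=k_i(k_i+1)(k_i+2)F_{k_i}^1$ by \eqref{Paneitzeigen}. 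Using $\delta F_k^1=\frac{2}{k+1}S_{k-1}$, we get $\delta G_i$ as a combination of $S_{k_i-2}$ and $S_{k_i}$. It is cleaner to work in the Chebyshev-$U$ normalization $U_k=(k+1)F_k^1$, with $x=\cos\theta$ and $U_k=\sin((k+1)\theta)/\sin\theta$. In this normalization $S_m(x,y)=\sum_{i+j=m}U_i(x)U_j(y)$ and $(1-x^2)U_k'$ is a difference of $U_{k\pm1}$.

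First, I would reduce the left-hand side to a one-dimensional integral. The key identity I expect to use is the generating function
\[
\sum_m S_m t^m=\bigl[(1-2xt+t^2)(1-2yt+t^2)\bigr]^{-1},
\]
together with $\delta U_{n}(x,y)=2\sum_{i+j=n-1}$-type formulas, which imply $\delta U_{n}=\sum_{i+j=n-1}U_iU_j$-like convolution structures. Rather than expanding triple products of $S$'s directly, I would compute $\mathcal D$ via the representation
\[
\delta H(x,y)=\int_0^1 H'(y+s(x-y))\,ds
\]
is awkward. Instead, in angular variables the Chebyshev weight $(1-x^2)^{1/2}dx=\sin^2\theta\,d\theta$ turns each $U_k$ into a sine and $S_m$ into an explicit trigonometric sum. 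The double integral of a product of three $S$-type kernels then becomes a finite combinatorial count of index triples satisfying linear constraints, namely products of sines integrated over $[0,\pi]^2$.

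Second, I would compute the right-hand side using the linearization formula $U_aU_b=\sum_{c=|a-b|,\,\text{step }2}^{a+b}U_c$. This makes $\int\sqrt{1-x^2}\,U_aU_bU_c\,dx=\frac{\pi}{2}\cdot\mathbf 1[\text{triangle condition, parity}]$. Each term $P_{r,3}u_1G_2G_3$ is therefore an explicit sum of indicator functions in $(k_1,k_2\pm1,k_3\pm1)$.

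Third, I would match the two sides term by term. This matching is the main obstacle: one must show that the lattice-point count from the left side equals the symmetric sum of weighted triangle indicators on the right side, for all $k_1,k_2,k_3$. I would first try to simplify it using symmetry, ordering $k_1\ge k_2\ge k_3$, and then verifying the identity separately in the regimes $k_1>k_2+k_3$ (where both sides should vanish) and $k_1\le k_2+k_3$ with the correct parity.

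As a fallback, I would prove the identity by a differentiation trick. The idea is to polarize a quadratic-in-$G$ identity: take $u_3$ to be a perturbation and derive \eqref{DG1G2G3} from a first variation of a suitable version of Lemma \ref{deltaH2} with a weight, combined with Lemma \ref{PNwf} for $N=3$.
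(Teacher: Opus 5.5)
Your reduction is the same as the paper's. You restrict to $u_i=F^1_{k_i}$, use $\delta G_k=\frac{k+2}{k+1}S_{k-2}-\frac{k}{k+1}S_k$ on the left and the linearization $U_aU_b=\sum U_c$ on the right, and compare. (For the reduction itself, $L^2$ completeness is not the right justification, since both sides involve up to three derivatives of $u_i$. What you need is that Gegenbauer partial sums of a smooth function converge in every $C^\ell[-1,1]$.)

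However, the proposal stops exactly where the content of the lemma lies. You leave the left side as an unevaluated lattice count, and nothing can be matched until that count is in closed form. The missing formula is
\[
\iint\sqrt{1-x^2}\sqrt{1-y^2}\,S_pS_qS_r\,dx\,dy=\frac{\pi^2}{4}\Bigl(\tfrac{q+r-p}{2}+1\Bigr)\Bigl(\tfrac{p+r-q}{2}+1\Bigr)\Bigl(\tfrac{p+q-r}{2}+1\Bigr)\chi(p,q,r).
\]
Your own generating function gives it in one line. The weighted double integral of $\prod_{\ell=1}^{3}\sum_m S_m t_\ell^m$ factors as the square of $\int_{-1}^1\sqrt{1-x^2}\prod_{\ell}(1-2xt_\ell+t_\ell^2)^{-1}dx=\frac{\pi}{2}\bigl[(1-t_1t_2)(1-t_1t_3)(1-t_2t_3)\bigr]^{-1}$. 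This holds because $\chi(a,b,c)=1$ exactly when $(a,b,c)=(B+C,A+C,A+B)$ with $A,B,C$ nonnegative integers.

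The one concrete claim you make about the matching step is false: both sides do not vanish when $k_1>k_2+k_3$. Since $\delta G_k$ involves $S_{k-2}$ and $G_k$ involves $F^1_{k\pm1}$, the support is shifted by $2$. For instance, $u_1=F^1_4$, $u_2=u_3=x$ gives $\frac{3\pi^2}{10}$ on both sides. In general, for $k_1=k_2+k_3+2$ only the term $\Psi(k_1-2,k_2,k_3)=(k_2+1)(k_3+1)$ survives on the left, and both sides equal $\frac{\pi^2}{4}\frac{(k_1+2)k_2k_3}{k_1+1}\neq 0$. Vanishing holds only for $k_1\ge k_2+k_3+4$.

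After your ordering $k_1\ge k_2\ge k_3$ (odd $k_1+k_2+k_3$ is trivial), $B=\frac{k_1+k_3-k_2}{2}\ge1$ and $C=\frac{k_1+k_2-k_3}{2}\ge1$ automatically. The comparison must then be split by $A=\frac{k_2+k_3-k_1}{2}$:
\begin{itemize}
\item $A\ge1$: every indicator on the right equals $1$, and the polynomial form of $\Psi$ is valid for all left-hand terms, since it vanishes one step outside the triangle. A single rational identity in $k_1,k_2,k_3$ remains.
\item $A=0$ and $A=-1$: these boundary cases must be checked separately.
\item $A\le -2$: both sides vanish.
\end{itemize}
This is the case structure the paper uses.

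Your fallback does not fill the gap. No weighted version of Lemma \ref{deltaH2} is stated, and first variations of that lemma only produce bilinear forms, not the trilinear $\mathcal D$.
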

\begin{proof}
Since both sides are trilinear forms in $(u_1,u_2,u_3)$, it suffices to prove the lemma for $u_1=F_i^1$, $u_2=F_j^1$ and $u_3=F_k^1$. 

Note that $F_k^1=\frac{1}{k+1}U_k$, where $U_k$ is the Chebyshev polynomial of the second kind. Then we have
\begin{equation*}
U_iU_j=\sum_{r=0}^{\min\{i,j\}}U_{i+j-2r},
\end{equation*}
i.e.
\begin{equation*}
F_i^1F_j^1=\sum_{r=0}^{\min\{i,j\}}\frac{i+j-2r+1}{(i+1)(j+1)}F_{i+j-2r}^1,
\end{equation*}

Hence, we have
\begin{equation}\label{FiFjFk}
\int_{-1}^{1}\sqrt{1-x^2}F_i^1F_j^1F_k^1 dx=\frac{\pi}{2(i+1)(j+1)(k+1)}\chi(i,j,k),
\end{equation}
where $\chi(i,j,k)=1$ when $|i-j|\leq k\leq i+j$ and $i+j+k$ is even and $\chi(i,j,k)=0$ otherwise.

Recall $S_m$ in the proof of Lemma \ref{deltaH2}. Then we have
\begin{equation*}
\begin{aligned}
\int_{-1}^{1}\int_{-1}^{1}\sqrt{1-x^2}\sqrt{1-y^2}S_pS_qS_r dxdy
&=\frac{\pi^2}{4}\left(\frac{p+q-r}{2}+1\right)\left(\frac{p+r-q}{2}+1\right)\left(\frac{q+r-p}{2}+1\right)\chi(p,q,r)\\
&=: \frac{\pi^2}{4}\Psi(p,q,r).
\end{aligned}
\end{equation*}
Here we adopt the convention that $S_{m}=0$ and $\Psi(p,q,r)=0$ if any one of the index is negative. 

By the recurrence formulas, we have
\begin{equation}\label{recurrence1S31}
(1-x^2)(F_k^1)'=\frac{k(k+2)}{2(k+1)}(F_{k-1}^1-F_{k+1}^1)
\end{equation}
and
\begin{equation*}
\delta G_k=\frac{k+2}{k+1}S_{k-2}-\frac{k}{k+1}S_k.
\end{equation*}

Substituting them into the left-hand side of \eqref{DG1G2G3}, we get 
\begin{equation}\label{DGiGjGk1}
\begin{aligned}
\mathcal{D}(G_i,G_j,G_k)
=&\frac{\pi^2}{4(i+1)(j+1)(k+1)}\left[(i+2)(j+2)(k+2)\Psi(i-2,j-2,k)-ijk\Psi(i,j,k)\right.\\
-&\left.\sum_{cyc}i(j+2)(k+2)\Psi(i,j-2,k-2)+\sum_{cyc}ij(k+2)\Psi(i,j,k-2)\right].
\end{aligned}
\end{equation}

On the other hand, since
\begin{equation*}
    P_{r,3}F_i^1=i(i+1)(i+2)F_i^1,
\end{equation*}
by \eqref{FiFjFk} and \eqref{recurrence1S31}, we have
\begin{equation}\label{P3FiGjGk}
\begin{aligned}
&\pi \sum_{cyc}\int_{-1}^{1}\sqrt{1-x^2}P_{r,3}F_i^1 G_jG_k dx\\
=&\frac{\pi^2}{8}\sum_{cyc}\frac{i(i+2)}{(j+1)(k+1)}\left[(j+2)(k+2)\chi(i,j-1,k-1)-(j+2)k\chi(i,j-1,k+1)\right.\\
-&\left.j(k+2)\chi(i,j+1,k-1)+jk\chi(i,j+1,k+1)]\right.    
\end{aligned}
\end{equation}

Let
\begin{equation*}
A=\frac{j+k-i}{2},\qquad B=\frac{i+k-j}{2},\qquad C=\frac{i+j-k}{2},\qquad
\end{equation*}

Comparing the cyclic summations \eqref{DGiGjGk1} and \eqref{P3FiGjGk} in 
Case 1: $A,B,C\geq 1$, Case 2: $A=0$, Case 3: $A=-1$, Case 4: $A\leq -2$, we see that
\begin{equation*}
\mathcal{D}(G_i,G_j,G_k)=    \pi \sum_{cyc}\int_{-1}^{1}\sqrt{1-x^2}P_{r,3}F_i^1 G_jG_k dx,
\end{equation*}
which is the desired result. We omit the details here.
\end{proof}

Now we give the proof of Proposition \ref{seminormodd} when $N=3$.
\begin{proof}[Proof of Proposition \ref{seminormodd} when $N=3$.]

By Lemma \ref{GPbound}, we have $G'\leq \frac{1}{\alpha}$. Hence by the mean-value theorem, $\delta G\leq \frac{1}{\alpha}$.

Then by \eqref{DG1G2G3} and Lemma \ref{deltaH2}, we have
\begin{equation*}
\begin{aligned}
I_3
=&\frac{1}{3\pi} \mathcal{D}(G,G,G)=\frac{1}{3\pi}\int_{-1}^{1}\int_{-1}^{1}(1-x^2)^{\frac12}(1-y^2)^{\frac12}(\delta G)^3 dxdy\\
\leq &\frac{1}{3\pi\alpha}\int_{-1}^{1}\int_{-1}^{1}(1-x^2)^{\frac12}(1-y^2)^{\frac12}(\delta G)^2 dxdy\\
= &\frac{2}{3\alpha}\int_{-1}^{1}(1-x^2)^{\frac12}G(\mathcal{Q}_{3}G-G) dx
\end{aligned}
\end{equation*}
as desired.
\end{proof}

To prove Proposition \ref{seminormodd} when $N=5$, we need the following spectral inequality, which shows that the $\mathcal{Q}_{5}$ energy dominates the $\mathcal{Q}_{3}$ energy.
\begin{lemma}\label{Q5Q3}
For any smooth function $G$ and $F=(1-x^2)G'$, we have
\begin{equation*}
\begin{aligned}
\frac{3}{5}\int_{-1}^{1}(1-x^2)^{\frac{3}{2}}G(\mathcal{Q}_{5}-6)Gdx 
\geq& \frac{27}{16}\int_{-1}^{1}(1-x^2)^{\frac{1}{2}}G(\mathcal{Q}_{3}-1)Gdx +\frac{9}{16}\int_{-1}^{1}(1-x^2)^{\frac{1}{2}}F(\mathcal{Q}_{3}-1)Fdx\\    
&+\frac{3}{4}\int_{-1}^{1}(1-x^2)^{\frac{1}{2}}F^2dx.
\end{aligned}
\end{equation*}
\end{lemma}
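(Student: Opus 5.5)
The plan is to reduce the inequality to a comparison of quadratic forms in a common orthogonal basis. Both sides are quadratic in $G$ (note that $F=(1-x^2)G'$ depends linearly on $G$), so by density it suffices to treat $G=\sum_{k\ge0}c_kF_k^{2}$, a finite combination of the five-dimensional Gegenbauer polynomials $F_k^{2}=F_k^{\frac{N-1}{2}}$ with $N=5$.

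\textbf{Step 1 (left-hand side).} The left-hand side is diagonal in this basis. From \eqref{Paneitzeigen} and the factorization of $P_{r,5}$, the $k$-th eigenvalue of $-(1-x^2)\frac{d^2}{dx^2}+5x\frac{d}{dx}$ is $\lambda_k=k(k+4)$. Hence $\mathcal{Q}_5F_k^2=(k+1)(k+2)(k+3)F_k^2$, since $\sqrt{\lambda_k+4}=k+2$ and $\lambda_k+3=(k+1)(k+3)$. Together with \eqref{Fknorm} for $N=5$, this gives
\[
\frac35\int_{-1}^1(1-x^2)^{3/2}G(\mathcal{Q}_5-6)G\,dx=\frac35\sum_k\big[(k+1)(k+2)(k+3)-6\big]\,\|F_k^2\|^2 c_k^2 ,
\]
where $\|F_k^2\|^2$ denotes the weighted norm in \eqref{Fknorm}.

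\textbf{Step 2 (right-hand side).} The three right-hand terms live in the three-dimensional setting, with weight $(1-x^2)^{1/2}$ and basis $F_m^1=U_m/(m+1)$, for which $\mathcal{Q}_3F_m^1=(m+1)F_m^1$. I would express $G$ and $F$ in this basis.

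For $F$ the expansion is short. By the derivative formula \eqref{201}, $C_k^2=\tfrac12 (C_{k+1}^1)'$, so each $F_k^2$ is a multiple of $(F^1_{k+1})'$. Then \eqref{recurrence1S31} gives $(1-x^2)(F^1_{k+1})'$ as a two-term combination of $F_k^1$ and $F_{k+2}^1$. Consequently $(1-x^2)(F_k^2)'$ is a combination of finitely many $F^1_m$ with $|m-k|\le 3$.

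For $G$ itself the expansion is not short. Expanding $C_k^2$ in Chebyshev $U$'s gives the full same-parity sum $C_k^2=\sum_{j}(j+1)U_{k-2j}\cdot(\text{const})$. This produces a non-diagonal quadratic form in $(c_k)$.

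To avoid handling this full matrix directly, I would instead use Lemma \ref{deltaH2}. It rewrites $\int(1-x^2)^{1/2}H(\mathcal{Q}_3-1)H\,dx$ as $\frac{1}{2\pi}\iint(1-x^2)^{1/2}(1-y^2)^{1/2}(\delta H)^2$. For $H=G=\sum c_kF_k^2$, the difference quotient $\delta F_k^2$ expands, as in the proof of Lemma \ref{deltaH2}, into the products $S_m(x,y)$, whose Gram matrix is explicit ($\frac{\pi^2}{4}$ times an index-dependent factor). This turns the $G$-term into an explicit, though still non-diagonal, quadratic form in $(c_k)$.

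\textbf{Step 3 (positivity).} The goal is to show that LHS$-$RHS $=\sum_{k,l}M_{kl}c_kc_l$ with $M\succeq0$, splitting by parity since all matrices preserve parity. My first attempt would be to exhibit the difference as a sum of squares, namely a nonnegative combination of norms $\int(1-x^2)^{1/2}(\mathcal{L}G)^2$ for a few explicit first- or second-order operators $\mathcal{L}$. I would guess these from the low modes $k=0,1,2$. A useful check is the mode $k=0$, where the LHS is $0$ and $F=-2xc_0$: the RHS must then vanish or be compensated, which pins down the constants $\frac{27}{16},\frac{9}{16},\frac34$. If an exact sum of squares is not found, I would fall back to Gershgorin-type diagonal dominance. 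The leading diagonal growth is $k^3$ on the left against $\sim k^3$ from the $F$-term on the right, so the precise constant $\frac35$ matters asymptotically, and the finitely many low modes would be checked by hand.

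\textbf{Main obstacle.} The hardest part is Step 3: the $G$-term on the right couples all modes of the same parity, so proving that the difference is positive semidefinite (rather than merely verifying it on finitely many modes) requires finding the right sum-of-squares decomposition.
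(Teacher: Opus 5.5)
Your proposal does not yet prove the lemma. All of its content sits in Step 3, which you leave as a search (guess a sum of squares, otherwise Gershgorin). The setup you prepare for that search also rests on a false claim.

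Relation \eqref{recurrence1S31}, applied to $F_k^2=\frac{3}{(k+1)(k+3)}(F_{k+1}^1)'$, shows that $(1-x^2)F_k^2=\frac{3}{2(k+2)}(F_k^1-F_{k+2}^1)$ is two-term. But $F=(1-x^2)G'$ involves $(1-x^2)(F_k^2)'$, which carries one more derivative and is a full same-parity sum. From the ODE for $U_{k+1}$ and $C_k^2=\sum_j(k-2j+1)U_{k-2j}$ one gets
\[
(1-x^2)(C_k^2)'=-\tfrac{k(k+1)}{2}U_{k+1}+3\sum_{j\ge1}(k-2j+2)U_{k-2j+1}.
\]
For example, $(1-x^2)(F_5^2)'=\frac{1}{56}\bigl(-15U_6+15U_4+9U_2+3U_0\bigr)$ has a $U_0$ component.

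So in the $F_k^2$ basis all three right-hand forms are dense, not only the $G$-term. The cancellation is also severe: for the mode $F_k^2$, the $F$-term alone removes asymptotically $\frac{15}{16}$ of the diagonal entry $\sim\frac{27\pi}{10k}$ of the left side. Crude diagonal dominance therefore has very little room, and no sum of squares is visible in these coordinates.

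Your $k=0$ sanity check is also vacuous. For $G=c_0$ one has $F=0$, not $-2xc_0$, so both sides vanish and nothing is pinned down.

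The missing idea is to expand in the other direction, $G=\sum_k c_kF_k^1$, because the connection $F^1\to F^2$ is two-term:
\[
F_k^1=\frac{(k+2)(k+3)}{6(k+1)}F_k^2-\frac{k(k-1)}{6(k+1)}F_{k-2}^2 .
\]
With this expansion everything becomes banded:
\begin{itemize}
\item the left side equals $\frac{3\pi}{40}\sum_{k\ge1}\frac{(k+1)(k+2)(k+3)-6}{(k+1)(k+3)}\bigl(\frac{k+3}{k+1}c_k-\frac{k+1}{k+3}c_{k+2}\bigr)^2$;
\item the $G$-term is diagonal, namely $\frac{\pi}{2}\sum\frac{k}{(k+1)^2}c_k^2$;
\item by \eqref{recurrence1S31}, $F=\sum f_kF_k^1$ with the two-term coefficients $f_k=\frac{(k+1)(k+3)}{2(k+2)}c_{k+1}-\frac{(k-1)(k+1)}{2k}c_{k-1}$.
\end{itemize}
The paper's proof then writes the difference of the two sides exactly as
\[
\frac{3\pi}{8}\sum_{k\ge1}\frac{k(k^2+k+36)}{80}\Bigl(\frac{c_k}{k+1}-\frac{c_{k+2}}{k+3}\Bigr)^2+\frac{3\pi}{20}c_1^2+\pi\sum_{k\ge2}\frac{9k+114}{160(k+1)^2}c_k^2\ \ge 0 .
\]
Lemma \ref{deltaH2} and the Gram matrix of the $S_m$ are unnecessary here. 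They only reproduce the diagonal form that the $F^1$ basis gives directly.
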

\begin{proof}
To avoid ambiguity, we will keep the superscripts of the Gegenbauer polynomials $F_k^1$ and $F_k^2$ in this proof.

We first expand $G=\sum_{k=0}^{\infty}c_kF_k^1$. By the recurrence formulas \eqref{recurence1} and \eqref{recurence2}, we have
\begin{equation*}
F_k^1=\frac{(k+2)(k+3)}{6(k+1)}F_k^2-\frac{k(k-1)}{6(k+1)}F_{k-2}^2
\end{equation*}
and
\begin{equation*}
(1-x^2)(F_k^1)'=\frac{k(k+2)}{2(k+1)}(F_{k-1}^1-F_{k+1}^1).
\end{equation*}
It follows that
\begin{equation*}
G=\sum_{k=0}^{\infty}\frac{k+2}{6}\left(\frac{k+3}{k+1}c_k-\frac{k+1}{k+3}c_{k+2}\right)F_k^2.
\end{equation*}
and
\begin{equation*}
F:=\sum_{k=0}^{\infty}f_kF_k^1=\sum_{k=0}^{\infty}\left(\frac{(k+1)(k+3)}{2(k+2)}c_{k+1}-\frac{(k-1)(k+1)}{2k}c_{k-1}\right)F_k^1,
\end{equation*}
where the second term is omitted when $k=0$.

Since
\begin{equation*}
\mathcal{Q}_{5} F_k^2=(k+1)(k+2)(k+3)F_k^2,
\end{equation*}
by \eqref{Fknorm}, we have
\begin{equation*}
\int_{-1}^{1}(1-x^2)^{\frac{3}{2}}G(\mathcal{Q}_{5}-6)Gdx=\frac{\pi}{8}\sum_{k=1}^{\infty}\frac{(k+1)(k+2)(k+3)-6}{(k+1)(k+3)}\left(\frac{k+3}{k+1}c_k-\frac{k+1}{k+3}c_{k+2}\right)^2,
\end{equation*}
\begin{equation*}
\int_{-1}^{1}(1-x^2)^{\frac{1}{2}}G(\mathcal{Q}_{3}-1)Gdx=\frac{\pi}{2}\sum_{k=1}^{\infty}\frac{k}{(k+1)^2}c_k^2.
\end{equation*}
\begin{equation*}
\int_{-1}^{1}(1-x^2)^{\frac{1}{2}}F(\mathcal{Q}_{3}-1)Fdx=\frac{\pi}{2}\sum_{k=1}^{\infty}\frac{k}{(k+1)^2}f_k^2
\end{equation*}
and
\begin{equation*}
\int_{-1}^{1}(1-x^2)^{\frac{1}{2}}F^2dx=\frac{\pi}{2}\sum_{k=0}^{\infty}\frac{1}{(k+1)^2}f_k^2.
\end{equation*}

Then we have
\begin{equation*}
\begin{aligned}
&\frac{3}{5}\int_{-1}^{1}(1-x^2)^{\frac{3}{2}}G(\mathcal{Q}_{5}-6)Gdx- \frac{27}{16}\int_{-1}^{1}(1-x^2)^{\frac{1}{2}}G(\mathcal{Q}_{3}-1)Gdx \\
-&\frac{9}{16}\int_{-1}^{1}(1-x^2)^{\frac{1}{2}}F(\mathcal{Q}_{3}-1)Fdx-\frac{3}{4}\int_{-1}^{1}(1-x^2)^{\frac{1}{2}}F^2dx\\
=&\frac{3\pi}{8}\sum_{k=1}^{\infty}\frac{k(k^2+k+36)}{80}\left(\frac{c_k}{k+1}-\frac{c_{k+2}}{k+3}\right)^2+\frac{3\pi}{20}c_1^2+\pi\sum_{k=2}^{\infty}\frac{9k+114}{160(k+1)^2}c_k^2\geq 0.
\end{aligned}
\end{equation*}
\end{proof}

With this spectral inequality, we can prove Proposition \ref{seminormodd} when $N=5$.
\begin{proof}[Proof of Proposition \ref{seminormodd} when $N=5$.]
By definition of $P_{r,5}$, we have
\begin{equation*}
(1-x^2)P_{r,5}u=9P_{r,3}u-P_{r,3}((1-x^2)G').
\end{equation*}
Then by Lemma \ref{descent} we have
\begin{equation*}
\begin{aligned}
I_5
=&\int_{-1}^{1}(1-x^2)^{\frac{3}{2}}G^2 P_{r,5}udx\\
=&9\int_{-1}^{1}(1-x^2)^{\frac{1}{2}}G^2 P_{r,3}udx-\int_{-1}^{1}(1-x^2)^{\frac{1}{2}}G^2 P_{r,3}((1-x^2)G')dx\\
=&9\int_{-1}^{1}(1-x^2)^{\frac{1}{2}}G^2 P_{r,3}udx+2\int_{-1}^{1}(1-x^2)^{\frac{3}{2}}G G'P_{r,3}Gdx.
\end{aligned}
\end{equation*}

  Applying \eqref{DG1G2G3} to $(G,G,G)$, we have
\begin{equation*}
\int_{-1}^{1}(1-x^2)^{\frac{1}{2}}G^2 P_{r,3}udx=\frac{1}{3\pi}\mathcal{D}(G,G,G),
\end{equation*}
while applying  \eqref{DG1G2G3} to the triple $(u,G,G)$ gives
\begin{equation*}
\int_{-1}^{1}(1-x^2)^{\frac{3}{2}}G G'P_{r,3}Gdx=\frac{1}{2\pi} \mathcal{D}(G,F,F)-\frac{1}{2}\int_{-1}^{1}(1-x^2)^{\frac{1}{2}}F^2 P_{r,3}u dx.
\end{equation*}
Here $F:=(1-x^2)G'$.
It follows that
\begin{equation}\label{I5DGG}
I_5=\frac{3}{\pi}\mathcal{D}(G,G,G)+\frac{1}{\pi }\mathcal{D}(G,F,F)-\int_{-1}^{1}(1-x^2)^{\frac{1}{2}}F^2 P_{r,3}u dx.
\end{equation}

By Lemma \ref{GPbound}, we have
\begin{equation*}
\begin{aligned}
I_5
\leq &\frac{51}{20\pi\alpha}\int_{-1}^{1}\int_{-1}^{1}(1-x^2)^{\frac12}(1-y^2)^{\frac12}(\delta G)^2 dxdy+\frac{17}{20\pi \alpha}\int_{-1}^{1}\int_{-1}^{1}(1-x^2)^{\frac12}(1-y^2)^{\frac12}(\delta F)^2 dxdy\\
&+\frac{2}{\alpha}\int_{-1}^{1}(1-x^2)^{\frac{1}{2}}F^2 dx.    
\end{aligned}
\end{equation*}

Applying Lemma \ref{deltaH2} to $G$ and $F$ respectively, we get
\begin{equation*}
\begin{aligned}
I_5
\leq &\frac{51}{10\alpha}\int_{-1}^{1}(1-x^2)^{\frac12}G(\mathcal{Q}_{3}G-G) dx+\frac{17}{10 \alpha}\int_{-1}^{1}(1-x^2)^{\frac12}(\mathcal{Q}_{3}F-F)F dx\\
&+\frac{2}{\alpha}\int_{-1}^{1}(1-x^2)^{\frac{1}{2}}F^2 dx.    
\end{aligned}
\end{equation*}

Multiplying Lemma \ref{Q5Q3} by $\frac{136}{45\alpha}$ and plugging it into the estimate above, we have
\begin{equation*}
I_5\leq \frac{136}{75\alpha}\int_{-1}^{1}(1-x^2)^{\frac32}G(\mathcal{Q}_{5}G-6G) dx
\end{equation*}
as desired.
\end{proof}

Finally, we focus on $N=7$. Similarly, we need the following spectral inequality.
\begin{lemma}\label{Q7Q3}
For any smooth function $G$, set $F=(1-x^2)G'$ and $H=(1-x^2)F'$. Then we have
\begin{equation*}
\begin{aligned}
\frac{12}{5}\int_{-1}^{1}(1-x^2)^{\frac{5}{2}}G(\mathcal{Q}_{7}-120)Gdx 
\geq& \frac{675}{7}\int_{-1}^{1}(1-x^2)^{\frac{1}{2}}G(\mathcal{Q}_{3}-1)Gdx +\frac{306}{7}\int_{-1}^{1}(1-x^2)^{\frac{1}{2}}F(\mathcal{Q}_{3}-1)Fdx\\
&+\frac{9}{7}\int_{-1}^{1}(1-x^2)^{\frac{1}{2}}H(\mathcal{Q}_{3}-1)Hdx+32\int_{-1}^{1}(1-x^2)^{\frac{1}{2}}F^2dx\\
&+2\int_{-1}^{1}(1-x^2)^{\frac{1}{2}}H^2dx+48\int_{-1}^{1}(1-x^2)^{\frac{3}{2}}F^2dx.
\end{aligned}
\end{equation*}
\end{lemma}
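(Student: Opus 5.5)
We follow the proof of Lemma \ref{Q5Q3}: expand everything in Gegenbauer coefficients, compute each quadratic form diagonally, and reduce the claim to a nonnegative quadratic form in the coefficients. We write $G=\sum_{k\ge0}c_kF_k^1$ and express $G$ in the basis $\{F_k^3\}$, which is orthogonal in $L^2((1-x^2)^{5/2}dx)$. This is done by applying twice the conversion $F^1\to F^2$ used in Lemma \ref{Q5Q3} and once its analogue $F^2\to F^3$; both follow from \eqref{recurence1}–\eqref{recurence3} with the appropriate $\nu$. The result is that the $F_k^3$-coefficient of $G$ is a fixed three-term combination of $c_k,c_{k+2},c_{k+4}$. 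Since $\mathcal{Q}_7F_k^3=(k+1)(k+2)(k+3)(k+4)(k+5)F_k^3$, the norms \eqref{Fknorm} with $N=7$ then give the left-hand side as an explicit sum of squares of these three-term combinations, with weights that are rational in $k$.

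For the right-hand side we proceed in the same way. Using $(1-x^2)(F_k^1)'=\frac{k(k+2)}{2(k+1)}(F_{k-1}^1-F_{k+1}^1)$ twice, the coefficients of $F=(1-x^2)G'$ in $\{F^1_k\}$ are the $f_k$ of Lemma \ref{Q5Q3}, and the coefficients $h_k$ of $H$ involve $c_{k-2},c_k,c_{k+2}$. The $\mathcal{Q}_3$-forms and the $L^2((1-x^2)^{1/2})$ norms are then diagonal, with weights $\frac{\pi}{2}\frac{k}{(k+1)^2}$ and $\frac{\pi}{2}\frac{1}{(k+1)^2}$ respectively. The remaining term $\int(1-x^2)^{3/2}F^2$ requires expanding $F$ in $\{F_k^2\}$. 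This uses the same two-term conversion as before, so $F$'s $F^2_k$-coefficient is a combination of $f_k$ and $f_{k+2}$, hence of four consecutive $c$'s of the same parity.

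Parity decouples the problem: even and odd $k$ never interact, so LHS$-$RHS is a banded quadratic form in $(c_k)$ of bandwidth at most $4$ in each parity class. The aim is to write this difference, as in Lemma \ref{Q5Q3}, as a sum of a few families of manifestly nonnegative terms. The natural first candidate, by analogy with Lemma \ref{Q5Q3}, is
\[
\sum_k p(k)\Big(\tfrac{c_k}{k+1}-\tfrac{c_{k+2}}{k+3}\Big)^2 \;+\; \text{(a term in the second difference of } \tfrac{c_k}{k+1}\text{)} \;+\; \sum_k q(k)\,c_k^2
\]
with $p,q\ge0$ polynomial or rational in $k$, plus finitely many low-index corrections. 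To find it, we substitute $d_k=c_k/(k+1)$, which is the variable that made Lemma \ref{Q5Q3} clean, write the form as $\sum_k [\text{band matrix}]$, and attempt a (rational-in-$k$) $LDL^T$-type decomposition. Positivity then reduces to checking that finitely many rational functions of $k$ are nonnegative for all $k$ beyond a threshold, with the small $k$ handled by direct computation. The specific constants $\frac{675}{7},\frac{306}{7},\frac97,32,2,48$ were presumably tuned so that this decomposition closes, possibly with some terms vanishing identically.

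The main obstacle is this last step: finding a sum-of-squares certificate for a pentadiagonal (in each parity) form whose diagonal weights grow like $k^5$, while the RHS contributions from $H(\mathcal{Q}_3-1)H$ grow like $k^5$ as well. The leading-order cancellation must therefore be checked carefully. We would first match the top-degree terms in $k$ (the $k^5$ coefficients), confirm that the leading symbol is a nonnegative quadratic in the shift (a polynomial in $\cos\theta$ that is nonnegative on $[-1,1]$), and then absorb the lower-order remainders into the diagonal term $q(k)c_k^2$. The finitely many small $k$ ($k\le 4$, say) would be verified by explicit computation.
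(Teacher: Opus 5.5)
Your setup is the paper's. You expand $G=\sum c_kF^1_k$ and pass to the $F^3_k$-coefficients, which are combinations of $c_k,c_{k+2},c_{k+4}$. You compute $f_k$, $h_k$ and the $F^2_k$-coefficients of $F$; these involve only $c_{k-1},c_{k+1},c_{k+3}$, not four $c$'s. Everything then reduces to a parity-decoupled banded form in $c_k/(k+1)$.

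The gap is that the lemma is one inequality with tuned constants, so its entire content is the positivity certificate, and that is exactly the step you leave open. The paper closes it with an exact identity
\[
\mathrm{LHS}-\mathrm{RHS}=\frac{5\pi}{16}\sum_{k\ge1}\Big(X_k\big(\Delta r_k-\tfrac{3}{k+3}r_k\big)^2+Y_kr_k^2+Z_k\tfrac{c_k^2}{(k+1)^2}\Big),\qquad r_k=\tfrac{c_k}{k+1}-\tfrac{c_{k+2}}{k+3},\quad \Delta r_k=r_k-r_{k+2},
\]
where $X_k=\frac{(k+3)(39k^4+\cdots)}{350}$, $Y_k=\frac{9(k^4+\cdots)}{70(k+3)}$ for $k\ge3$, and $Z_k=\frac{72(9k+100)}{35}$ for $k\ge4$. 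The values $Y_1,Y_2,Z_1,Z_2,Z_3$ are given explicitly and are positive. Without such an identity, or an equally sharp argument, nothing has been proved.

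Moreover, the route you sketch would not close as described.

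(i) Suppose the second-difference family is the plain $\Delta r_k$. The $c_kc_{k+4}$ coefficients fix its weight at $\frac{k}{k+3}X_k$. The $c_kc_{k+2}$ coefficients then force the first-difference weight (in the normalization above) to be
\[
Y_k+\frac{9X_k}{(k+3)^2}-\frac{3X_k}{k+3}+\frac{3X_{k-2}}{k+1}=-\frac{54}{35}k^3+O(k^2)<0 .
\]
You can check this by rewriting the paper's identity with summation by parts. So the correction $-\frac{3}{k+3}r_k$ is essential, not cosmetic.

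(ii) Checking that the $k^5$ symbol is nonnegative and then absorbing the rest into a diagonal $q(k)c_k^2$ misses where the inequality is tight. That symbol is proportional to $k^5\sin^4\theta$ and degenerates at $\theta=0$. Take $c_k/(k+1)=\phi(k/K)$ with $\phi$ smooth. Then the $k^5$-, $k^3$- and $k$-levels, and also the $G$- and $F$-terms on the right (which your count treats as lower order), are all of size $K^2$. With $\phi(s)=s^{-1+i\tau}$ (Mellin frequency $\tau$), the leading-order inequality becomes
\[
\tfrac35(4+\tau^2)(16+\tau^2)\ge\tfrac1{28}\big(990+324\tau^2+9\tau^4\big),
\]
which at $\tau=0$ reads $38.4\ge 35.36$. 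With under $10\%$ slack in this scale-invariant regime, the $k^4,k^3,k^2$ off-diagonal remainders cannot be absorbed crudely. One needs the exact three-level decomposition with the corrected second difference, as the paper provides.
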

\begin{proof}
To avoid ambiguity, we will keep the superscripts of the Gegenbauer polynomials $F_k^1$, $F_k^2$ and $F_k^3$ in this proof.

We first expand $G=\sum_{k=0}^{\infty}c_kF_k^1$. By the recurrence formulas \eqref{recurence1} and \eqref{recurence2}, we have
\begin{equation*}
F_k^1=\frac{(k+2)(k+3)}{6(k+1)}F_k^2-\frac{k(k-1)}{6(k+1)}F_{k-2}^2,
\end{equation*}
\begin{equation*}
F_k^1=\frac{(k+3)(k+4)(k+5)}{60(k+1)}F_k^3-\frac{(k-1)(k+3)}{30}F_{k-2}^3+\frac{(k-3)(k-2)(k-1)}{60(k+1)}F_{k-4}^3
\end{equation*}
and
\begin{equation*}
(1-x^2)(F_k^1)'=\frac{k(k+2)}{2(k+1)}(F_{k-1}^1-F_{k+1}^1).
\end{equation*}
It follows that
\begin{equation*}
\begin{aligned}
G=&\sum_{k=0}^{\infty}\frac{k+2}{6}\left(\frac{k+3}{k+1}c_k-\frac{k+1}{k+3}c_{k+2}\right)F_k^2\\
=&\sum_{k=0}^{\infty}\left(\frac{(k+3)(k+4)(k+5)}{60(k+1)}c_k-\frac{(k+1)(k+5)}{30}c_{k+2}+\frac{(k+1)(k+2)(k+3)}{60(k+5)}c_{k+4}\right)F_k^3\\
=:&\sum_{k=0}^{\infty}d_kF_k^3,
\end{aligned}
\end{equation*}
\begin{equation*}
\begin{aligned}
F:=&\sum_{k=0}^{\infty}f_kF_k^1=\sum_{k=0}^{\infty}\left(\frac{(k+1)(k+3)}{2(k+2)}c_{k+1}-\frac{(k-1)(k+1)}{2k}c_{k-1}\right)F_k^1\\
:=&\sum_{k=0}^{\infty}e_kF_k^2=\sum_{k=0}^{\infty}\left(\frac{(k+2)(k+3)}{6(k+1)}f_{k}-\frac{(k+1)(k+2)}{6(k+3)}f_{k+2}\right)F_k^2
\end{aligned}
\end{equation*}
where all coefficients with negative index are understood to be $0$, and
\begin{equation*}
H:=\sum_{k=0}^{\infty}h_kF_k^1=\sum_{k=0}^{\infty}\left(\frac{(k+1)(k+3)}{2(k+2)}f_{k+1}-\frac{(k-1)(k+1)}{2k}f_{k-1}\right)F_k^1
\end{equation*}
with the same convention.

Since
\begin{equation*}
\mathcal{Q}_{7} F_k^3=(k+1)(k+2)(k+3)(k+4)(k+5)F_k^3,
\end{equation*}
by \eqref{Fknorm}, we have
\begin{equation*}
\int_{-1}^{1}(1-x^2)^{\frac{5}{2}}G(\mathcal{Q}_{7}-120)Gdx=\frac{225\pi}{2}\sum_{k=1}^{\infty}\frac{(k+1)(k+2)(k+3)(k+4)(k+5)-120}{(k+1)(k+2)(k+3)^2(k+4)(k+5)}d_k^2,
\end{equation*}
\begin{equation*}
\int_{-1}^{1}(1-x^2)^{\frac{1}{2}}G(\mathcal{Q}_{3}-1)Gdx=\frac{\pi}{2}\sum_{k=1}^{\infty}\frac{k}{(k+1)^2}c_k^2,
\end{equation*}
\begin{equation*}
\int_{-1}^{1}(1-x^2)^{\frac{1}{2}}F(\mathcal{Q}_{3}-1)Fdx=\frac{\pi}{2}\sum_{k=1}^{\infty}\frac{k}{(k+1)^2}f_k^2,
\end{equation*}
\begin{equation*}
\int_{-1}^{1}(1-x^2)^{\frac{1}{2}}H(\mathcal{Q}_{3}-1)Hdx=\frac{\pi}{2}\sum_{k=1}^{\infty}\frac{k}{(k+1)^2}h_k^2,
\end{equation*}
\begin{equation*}
\int_{-1}^{1}(1-x^2)^{\frac{1}{2}}F^2dx=\frac{\pi}{2}\sum_{k=0}^{\infty}\frac{1}{(k+1)^2}f_k^2,
\end{equation*}
\begin{equation*}
\int_{-1}^{1}(1-x^2)^{\frac{1}{2}}H^2dx=\frac{\pi}{2}\sum_{k=0}^{\infty}\frac{1}{(k+1)^2}h_k^2,
\end{equation*}
and
\begin{equation*}
\int_{-1}^{1}(1-x^2)^{\frac{3}{2}}F^2dx=\frac{9\pi}{2}\sum_{k=0}^{\infty}\frac{1}{(k+1)(k+2)^2(k+3)}e_k^2.
\end{equation*}

Then we have
\begin{equation*}
\begin{aligned}
&\frac{12}{5}\int_{-1}^{1}(1-x^2)^{\frac{5}{2}}G(\mathcal{Q}_{7}-120)Gdx-\frac{675}{7}\int_{-1}^{1}(1-x^2)^{\frac{1}{2}}G(\mathcal{Q}_{3}-1)Gdx -\frac{306}{7}\int_{-1}^{1}(1-x^2)^{\frac{1}{2}}F(\mathcal{Q}_{3}-1)Fdx\\
-&\frac{9}{7}\int_{-1}^{1}(1-x^2)^{\frac{1}{2}}H(\mathcal{Q}_{3}-1)Hdx-32\int_{-1}^{1}(1-x^2)^{\frac{1}{2}}F^2dx-2\int_{-1}^{1}(1-x^2)^{\frac{1}{2}}H^2dx-48\int_{-1}^{1}(1-x^2)^{\frac{3}{2}}F^2dx\\
=&\frac{5\pi}{16}\sum_{k=1}^{\infty}\left(X_k(\Delta r_k-\frac{3}{k+3}r_k)^2+Y_kr_k^2+Z_k\frac{c_k^2}{(k+1)^2}\right)\geq 0,
\end{aligned}
\end{equation*}
where
\begin{equation*}
X_k=\frac{(k+3)(39k^4+560k^3+3375k^2+9310k+8136)}{350},
\end{equation*}
\begin{equation*}
Y_1=\frac{4701}{70},\qquad Y_2=\frac{37116}{175},\qquad Y_k=\frac{9(k^4+36k^3+629k^2+1442k-168)}{70(k+3)},\ k\geq 3,
\end{equation*}
\begin{equation*}
Z_1=\frac{5064}{35},\qquad Z_2=\frac{1632}{7},\qquad Z_3=\frac{9312}{35},\qquad Z_k=\frac{72(9k+100)}{35},\ k\geq 4,
\end{equation*}
and
\begin{equation}
r_k=\frac{c_k}{k+1}-\frac{c_{k+2}}{k+3},\qquad \Delta r_k=r_k-r_{k+2},
\end{equation}
which is the desired result.
\end{proof}

With this spectral inequality, we can prove Proposition \ref{seminormodd} when $N=7$.
\begin{proof}[Proof of Proposition \ref{seminormodd} when $N=7$.]
By definition of $P_{r,7}$, we have
\begin{equation*}
(1-x^2)P_{r,7}u=25P_{r,5}u-P_{r,5}((1-x^2)G').
\end{equation*}
Set $F=(1-x^2)G'$ and $H=(1-x^2)F'$. Then by Lemma \ref{descent}, we have
\begin{equation}\label{I7I5}
\begin{aligned}
I_7
=&\int_{-1}^{1}(1-x^2)^{\frac{5}{2}}G^2 P_{r,7}udx=25I_5-\int_{-1}^{1}(1-x^2)^{\frac{3}{2}}G^2 P_{r,5}Fdx\\
=&25I_5+2\int_{-1}^{1}(1-x^2)^{\frac{3}{2}}G FP_{r,5}Gdx.
\end{aligned}
\end{equation}

The factorization
\begin{equation*}
(1-x^2)P_{r,5}G=9P_{r,3}G-P_{r,3}((1-x^2)G')
\end{equation*}
and Lemma \ref{descent} to the second term with $(f_1,f_2,f_3)=(F,G,F)$ gives
\begin{equation}\label{GFP5G}
\int_{-1}^{1}(1-x^2)^{\frac{3}{2}}G FP_{r,5}Gdx=9 \int_{-1}^{1}(1-x^2)^{\frac{1}{2}}G FP_{r,3}Gdx+\int_{-1}^{1}(1-x^2)^{\frac{1}{2}}F^2P_{r,3}Fdx+\int_{-1}^{1}(1-x^2)^{\frac{1}{2}}G HP_{r,3}Fdx.   
\end{equation}

On the one hand, we apply Lemma \ref{descent} again to $(f_1,f_2,f_3)=(G,F,F)$ to get
\begin{equation}\label{F2P3F1}
\int_{-1}^{1}(1-x^2)^{\frac{1}{2}}F^2P_{r,3}Fdx=-2\int_{-1}^{1}(1-x^2)^{\frac{1}{2}}FHP_{r,3}Gdx.
\end{equation}

On the other hand, since
\begin{equation}\label{1-x2P3u}
(1-x^2)P_{r,5}u=9P_{r,3}u-P_{r,3}F,
\end{equation}
we have
\begin{equation}\label{F2P3F2}
\int_{-1}^{1}(1-x^2)^{\frac{1}{2}}F^2P_{r,3}Fdx=9\int_{-1}^{1}(1-x^2)^{\frac{1}{2}}F^2P_{r,3}udx-\int_{-1}^{1}(1-x^2)^{\frac{3}{2}}F^2P_{r,5}udx.
\end{equation}

Averaging \eqref{F2P3F1} and \eqref{F2P3F2}, we have 
\begin{equation}\label{F2P3F}
\int_{-1}^{1}(1-x^2)^{\frac{1}{2}}F^2P_{r,3}Fdx=-\int_{-1}^{1}(1-x^2)^{\frac{1}{2}}FHP_{r,3}Gdx+\frac{9}{2}\int_{-1}^{1}(1-x^2)^{\frac{1}{2}}F^2P_{r,3}udx-\frac{1}{2}\int_{-1}^{1}(1-x^2)^{\frac{3}{2}}F^2P_{r,5}udx.
\end{equation}

By \eqref{DG1G2G3}, we have
\begin{equation}\label{GFP3G}
\int_{-1}^{1}(1-x^2)^{\frac{1}{2}}G FP_{r,3}Gdx=\frac{1}{2\pi}\mathcal{D}(G,F,F)-\frac{1}{2}\int_{-1}^{1}(1-x^2)^{\frac{1}{2}} F^2P_{r,3}udx
\end{equation}
and
\begin{equation}\label{GHP3F}
\int_{-1}^{1}(1-x^2)^{\frac{1}{2}}G HP_{r,3}Fdx=\frac{1}{2\pi}\mathcal{D}(G,H,H)-\frac{1}{2}\int_{-1}^{1}(1-x^2)^{\frac{1}{2}} H^2P_{r,3}udx.
\end{equation}

Substituting \eqref{F2P3F}, \eqref{GFP3G} and \eqref{GHP3F} into \eqref{GFP5G}, by \eqref{I5DGG} and \eqref{I7I5}, we get
\begin{equation*}
\begin{aligned}
I_7
=&\frac{75}{\pi}\mathcal{D}(G,G,G)+\frac{34}{\pi}\mathcal{D}(G,F,F)+\frac{1}{\pi}\mathcal{D}(G,H,H)-25\int_{-1}^{1}(1-x^2)^{\frac{1}{2}}F^2P_{r,3}udx\\
-&\int_{-1}^{1}(1-x^2)^{\frac{1}{2}}H^2P_{r,3}udx-2\int_{-1}^{1}(1-x^2)^{\frac{1}{2}}FHP_{r,3}Gdx-\int_{-1}^{1}(1-x^2)^{\frac{3}{2}}F^2P_{r,5}udx.
\end{aligned}
\end{equation*}

Combining \eqref{F2P3F} and \eqref{1-x2P3u}, we get
\begin{equation*}
2\int_{-1}^{1}(1-x^2)^{\frac{1}{2}}FHP_{r,3}Gdx=-\int_{-1}^{1}(1-x^2)^{\frac{1}{2}}F^2(9P_{r,3}u-(1-x^2)P_{r,5}u)dx.
\end{equation*}

Then we have
\begin{equation*}
\begin{aligned}
I_7
=&\frac{75}{\pi}\mathcal{D}(G,G,G)+\frac{34}{\pi}\mathcal{D}(G,F,F)+\frac{1}{\pi}\mathcal{D}(G,H,H)-16\int_{-1}^{1}(1-x^2)^{\frac{1}{2}}F^2P_{r,3}udx\\
-&\int_{-1}^{1}(1-x^2)^{\frac{1}{2}}H^2P_{r,3}udx-2\int_{-1}^{1}(1-x^2)^{\frac{3}{2}}F^2P_{r,5}udx.
\end{aligned}
\end{equation*}

Applying Lemma \ref{GPbound} and Lemma \ref{deltaH2}, we obtain
\begin{equation*}
\begin{aligned}
I_7
\leq& \frac{675}{7\alpha}\int_{-1}^{1}(1-x^2)^{\frac{1}{2}}G(\mathcal{Q}_{3}G-G)dx+\frac{306}{7\alpha}\int_{-1}^{1}(1-x^2)^{\frac{1}{2}}F(\mathcal{Q}_{3}F-F)dx+\frac{9}{7\alpha}\int_{-1}^{1}(1-x^2)^{\frac{1}{2}}H(\mathcal{Q}_{3}H-H)dx\\
+&\frac{32}{\alpha}\int_{-1}^{1}(1-x^2)^{\frac{1}{2}}F^2 dx
+\frac{2}{\alpha}\int_{-1}^{1}(1-x^2)^{\frac{1}{2}}H^2 dx
+\frac{48}{\alpha}\int_{-1}^{1}(1-x^2)^{\frac{3}{2}}F^2 dx.
\end{aligned}
\end{equation*}

By Lemma \ref{Q7Q3}, we get
\begin{equation*}
I_7\leq \frac{12}{5\alpha}\int_{-1}^{1}(1-x^2)^{\frac{5}{2}}G(\mathcal{Q}_{7}G-120G)dx
\end{equation*}
as desired.
\end{proof}

\section{Proofs of the rigidity Theorems}\label{ProofMain}
In this section, we will prove Theorem \ref{main} and Theorem \ref{mainodd} respectively.

\subsection{Proof of Theorem \ref{main}}\label{subsec proof main}
We first prove Theorem \ref{main}. Note that if $\beta=0$, then $[(1-x^2)^{\frac{N}{2}-1}G]^{(\frac{N}{2}-1)}$ is identically zero by \eqref{D(N-2/2)G}. It follows that $(1-x^2)^{\frac{N}{2}-1}G$ is a polynomial of degree at most $\frac{N}{2}-2$. Since $G(-1)=G(1)=0$, $(1-x^2)^{\frac{N}{2}-1}G$ has a zero of multiplicity $\frac{N}{2}$ at $x=\pm 1$, which forces $G$ to be identically zero as desired. Suppose for contradiction that $\beta\neq 0$.  Then by \eqref{a} and \eqref{D(N-2/2)G}, we have $0<\beta<\frac{1}{\alpha}$.

To begin with, for an even number $N$, we introduce the quantity
\begin{equation*}%\label{def DN}
D_N=\sum\limits_{k=2}^{\infty}\left[\frac{\Gamma(k+N-1)}{\Gamma(k+1)}\lambda_k-(\lambda_2-\frac{c_N}{\alpha})\frac{\Gamma(k+N-1)}{\Gamma(k+1)}-\frac{R_{N,k}}{\alpha}\right]b_{k}^{2},
\end{equation*}
where 
\begin{equation*}
c_N=\frac{2(N^3+N^2+5N-4)}{9N}.
\end{equation*}

We also recall $M=\frac{N}{2}-1$ and
\begin{equation*}
R_{N,k}=N!+N!\sum_{r=1}^{M}\frac{M!}{3r!(r-1)!(M+r)!}\sum_{d=0}^{M-r}\frac{(r+d-1)!(M-r-d+1)[3(M+r)+d]}{d!(M+r+d)(M+r+d+1)}\prod_{i=0}^{r-1}(\lambda_k-\lambda_i ).
\end{equation*}
from \eqref{RNk}.

In fact, by Proposition \ref{RNkest} we have
\begin{equation*}
R_{N,k}\leq c_N\frac{\Gamma(k+N-1)}{\Gamma(k+1)},\qquad k\geq 2.
\end{equation*}

Then we have
\begin{equation*}
D_N= \sum\limits_{k=2}^{\infty}\left[(\lambda_k-\lambda_2)\frac{\Gamma(k+N-1)}{\Gamma(k+1)}+\frac{1}{\alpha}\left(c_N \frac{\Gamma(k+N-1)}{\Gamma(k+1)}-R_{N,k}\right)\right]b_{k}^{2}\geq 0.
\end{equation*}

On the other hand, since 
\begin{equation*}
R_{N,1}=\Gamma(N)\frac{N(N+1)}{3},
\end{equation*}
we have
\begin{equation*}
D_N=\sum\limits_{k=1}^{\infty}\left[\frac{\Gamma(k+N-1)}{\Gamma(k+1)}\lambda_k-(\lambda_2-\frac{c_N}{\alpha})\frac{\Gamma(k+N-1)}{\Gamma(k+1)}-\frac{R_{N,k}}{\alpha}\right]b_{k}^{2}+\Gamma(N)\left[N+2+\frac{1}{\alpha}\left(\frac{N(N+1)}{3}-c_N\right)\right]b_1^2.
\end{equation*}

By \eqref{Paneitzeigen} and the Gegenbauer expansion of $G$, we get the following spectral identities.

\begin{equation*}
\lfloor G\rfloor^2=\sum\limits_{k=1}^{\infty}\frac{\Gamma(k+N-1)}{\Gamma(k+1)}\lambda_k b_k^2,
\end{equation*}
\begin{equation*}
\int_{-1}^{1}|[(1-x^2)^{\frac{N}{2}-1} G]^{(\frac{N}{2}-1)}|^2 dx=\sum\limits_{k=1}^{\infty}\frac{\Gamma(k+N-1)}{\Gamma(k+1)}b_k^2,
\end{equation*}
\begin{equation*}
\int_{-1}^{1}(1-x^2)^{\frac{N}{2}-1+j}(G^{(j)})^2 dx=\sum\limits_{k=1}^{\infty}\left(\prod_{i=0}^{j-1}(\lambda_k-\lambda_i )\right)b_k^2, \qquad 1\leq j\leq \frac{N}{2}-1.
\end{equation*}

It then follows from Proposition \ref{Gfloor est} that
\begin{equation*}
\begin{aligned}
    D_N
    =&\lfloor G\rfloor^2-(\lambda_2-\frac{c_N}{\alpha})\int_{-1}^{1}|[(1-x^2)^\frac{N-2}{2} G]^{(\frac{N-2}{2})}|^2 dx-\frac{1}{\alpha}\sum_{k=1}^{\infty}R_{N,k}b_k^2+\Gamma(N)\left[N+2+\frac{1}{\alpha}\left(\frac{N(N+1)}{3}-c_N\right)\right]b_1^2\\
    \leq &-\left(\frac{N^2+3N+4}{2}-\frac{c_N}{\alpha}\right)\int_{-1}^{1}|[(1-x^2)^\frac{N-2}{2} G]^{(\frac{N-2}{2})}|^2+\Gamma(N)\left[N+2+\frac{1}{\alpha}\left(\frac{N(N+1)}{3}-c_N\right)\right]b_1^2.
\end{aligned}
\end{equation*}

By \eqref{D(N-2/2)G} and \eqref{Fknorm}, we have
\begin{equation*}
D_N\leq \frac{2^N\Gamma(\frac{N}{2})^2}{(N+1)(N-1)}\beta\left[\frac{N-1}{2}\left(N+2+\frac{1}{\alpha}\left(\frac{N(N+1)}{3}-c_N\right)\right)\beta-\left(\frac{N^2+3N+4}{2}-\frac{c_N}{\alpha}\right)\left(N+1-\frac{1}{\alpha}\right)\right].
\end{equation*}

Since $D_N\geq 0$, we get
\begin{equation*}
\beta \geq \frac{2}{N-1}\left(\frac{N^2+3N+4}{2}-\frac{c_N}{\alpha}\right)\left(N+1-\frac{1}{\alpha}\right)\left[N+2+\frac{1}{\alpha}\left(\frac{N(N+1)}{3}-c_N\right)\right]^{-1},
\end{equation*}
and hence
\begin{equation*}
\alpha\beta\geq \frac{2}{N-1}\left(\frac{N^2+3N+4}{2}\alpha-c_N \right)\left((N+1)\alpha-1\right)\left[(N+2)\alpha+\frac{N(N+1)}{3}-c_N\right]^{-1}=:Z_{N}(\alpha).
\end{equation*}

One can check that $Z_N(\alpha)$ is monotone increasing in $\alpha$. Hence,
\begin{equation*}
\alpha\beta\geq Z_{N}(\alpha)\geq Z_{N}(\frac{1}{2})=\left(\frac{N^2+3N+4}{2}-2c_N \right)\left[N+2+\frac{2N(N+1)}{3}-2c_N\right]^{-1},
\end{equation*}
and hence
\begin{equation*}
a=\frac{N}{N+1}(1-\alpha\beta)\leq \frac{N^2}{6(N+2+\frac{2N(N+1)}{3}-2c_N)}=\frac{3N^3}{2(2N^3+11N^2-2N+16)}.
\end{equation*}

Direct computation shows that
\begin{equation*}
a<\frac{N-2}{N-1}.
\end{equation*}
This contradicts Theorem \ref{bk} and completes the proof of Theorem \ref{main}.

\subsection{Proof of Theorem \ref{mainodd}}
In this subsection, we will prove Theorem \ref{mainodd}. Note that if $\beta=0$, by \eqref{D(N-2/2)Godd}, we have
\begin{equation*}
\sum_{k=1}^{\infty}\frac{\Gamma(k+N-1)}{\Gamma(k+1)}b_k^2=\int_{-1}^{1}(1-x^2)^{\frac{N-2}{2}}G\mathcal{Q}_{N}G dx=0.
\end{equation*}
It follows that $a_k=0$ for all $k\geq 1$. Since $a_0=0$, we deduce that $G$ is identically zero and $u$ is constant.

In the following, we will assume $\beta>0$. Let $t=\alpha\beta>0$. By Theorem \ref{bk}, we have
\begin{equation*}
0<t\leq \frac{2}{N(N-1)}.
\end{equation*}

Since
\begin{equation*}
\lfloor G\rfloor^2=\sum\limits_{k=1}^{\infty}\frac{\Gamma(k+N-1)}{\Gamma(k+1)}\lambda_k b_k^2,
\end{equation*}
and
\begin{equation*}
\int_{-1}^{1}(1-x^2)^{\frac{N}{2}-1}G^2=\sum\limits_{k=1}^{\infty}b_k^2,
\end{equation*}
by Proposition \ref{seminormodd}, we have
\begin{equation*}
\sum\limits_{k=1}^{\infty}\rho_{N,k}\frac{\Gamma(k+N-1)}{\Gamma(k+1)}b_k^2\leq 0,  
\end{equation*}
where
\begin{equation*}
\rho_{N,k}=\rho_{N,k}(\alpha):=\lambda_k+\frac{N(N-1)}{2}-\frac{N\theta_N}{\alpha}-\frac{N!-N\theta_N (N-2)!}{\alpha}\frac{\Gamma(k+1)}{\Gamma(k+N-1)}.
\end{equation*}

Since $\theta_3=\frac{2}{3}$, $\theta_5=\frac{136}{75}$, $\theta_7=\frac{12}{5}$, we have
\begin{equation*}
\rho_{N,k+1}-\rho_{N,k}=2k+N+\frac{(N!-N\theta_N (N-2)!)(N-2)}{\alpha (k+N-1)}\frac{\Gamma(k+1)}{\Gamma(k+N-1)}>0.
\end{equation*}
Hence $\rho_{N,k}$ is strictly increasing in $k$.

Define the auxiliary quantity
\begin{equation*}
D_N:=\sum_{k=2}^{\infty}(\rho_{N,k}-\rho_{N,2})\frac{\Gamma(k+N-1)}{\Gamma(k+1)}b_k^2.
\end{equation*}
Then we have $D_N\geq 0$.   

On the other hand, by \eqref{D(N-2/2)Godd} and \eqref{F1 beta}, we have
\begin{equation*}
\begin{aligned}
D_N
=&\sum_{k=1}^{\infty}(\rho_{N,k}-\rho_{N,2})\frac{\Gamma(k+N-1)}{\Gamma(k+1)}b_k^2+(\rho_{N,2}-\rho_{N,1})\Gamma(N)b_1^2\\
\leq &-\rho_{N,2}\sum_{k=1}^{\infty}\frac{\Gamma(k+N-1)}{\Gamma(k+1)}b_k^2+(\rho_{N,2}-\rho_{N,1})\Gamma(N)b_1^2\\
=&\frac{2^{N}\Gamma(\frac{N}{2})^2}{(N+1)(N-1)}\beta\left[-\rho_{N,2}\left(N+1-\frac{1}{\alpha}\right)+\frac{N-1}{2}(\rho_{N,2}-\rho_{N,1})\beta\right].
\end{aligned}
\end{equation*}

When $N=3$, we have $\rho_{3,1}=6-\frac{4}{\alpha}$ and $\rho_{3,2}=11-\frac{10}{3\alpha}$. Since $t\le \frac{1}{3}$, we have
\begin{equation*}
0\leq D_3\leq \frac{\pi t}{12\alpha^3}P_3(\alpha,t),
\end{equation*}
where
\begin{equation*}
    P_3(\alpha,t)=-132\alpha^2+15\alpha t+73\alpha+2t-10\le P_3(\alpha,\frac{1}{3})
 =-\frac23(6\alpha-1)(33\alpha-14)<0,
 \end{equation*}
 a contradiction.

When $N=5$, we have $\rho_{5,1}=15-\frac{59}{5\alpha}$ and $\rho_{5,2}=22-\frac{254}{25\alpha}$. Since $t\le \frac{1}{10}$, we have
\begin{equation*}
0\leq D_5\leq \frac{3\pi t}{50\alpha^3}P_5(\alpha,t),
\end{equation*}
where
\begin{equation*}
     P_5(\alpha,t)=-1650\alpha^2+175\alpha t+1037\alpha+41t-127
     \le P_5(\alpha,\frac{1}{10})
     =-\frac1{10}
 (16500\alpha^2-10545\alpha+1229)<0,
\end{equation*}
a contradiction.

When $N=7$, we have $\rho_{7,1}=28-\frac{21}{\alpha}$ and $\rho_{7,2}=37-\frac{18}{\alpha}$. Since $t\le \frac{1}{21}$, we have
\begin{equation*}
0\leq D_7\leq \frac{75\pi t}{8\alpha^3} P_7(\alpha,t),
\end{equation*}
where
\begin{equation*}
    P_7(\alpha,t)=-296\alpha^2+27\alpha t+181\alpha+9t-18\le
    P_7(\alpha,\frac{1}{21})=-\frac17(2072\alpha^2-1276\alpha+123)<0,
\end{equation*}
a contradiction.

This completes the proof of Theorem \ref{mainodd}.

\appendix

\section{Proof of Proposition \ref{newIBP}}\label{NewIBPproof}

\subsection{Notation}

Our strategy is to transform the problem from $(-1,1)$ to $\R$. To be more precise, we set
\[
x=\tanh t,
\qquad
w=1-x^2=\text{sech}^2t,
\qquad
D=\frac{d}{dx},
\qquad
T=\frac{d}{dt}=wD,
\qquad
R_a=T-2ax.
\]
By a slight abuse of notation, we use the same symbol for a function of \(x\) and its pullback under \(x=\tanh t\).  

In this section, we introduce the operators needed in the proof to deal with the complicated differentiation and integration by parts procedure.

For \(n\geq 0\), we define the differential operators
\[
V_{a,n}=R_aR_{a+1}\cdots R_{a+n-1},
\qquad
V_{a,0}=\Id,
\]
and put
\[
U_r=V_{1-r,r},
\qquad
W_s=V_{-s,s+1}V_{M-s+1,s}.
\]
The elementary conjugation rule
\begin{equation}\label{Ra conju}
R_a(w^jF)=w^jR_{a+j}F
\end{equation}
gives
\begin{equation}\label{eq:U}
U_r=w^rD^r.
\end{equation}

For any real number \(a\), we define
\[
L_a=-wD^2+2(a+1)xD=-w^{-a}D(w^{a+1}D).
\]
In particular, write
\[
L=L_{M+1},
\]
where we recall $M=\frac{N-2}{2}$, and define
\[
K_0=\Id,
\qquad
K_s=\prod_{\ell=1}^{s}
\bigl[L_M+\ell(2M+1-\ell)\bigr],
\qquad
A_s=DK_s.
\]
Then we have the following expressions of $K_s$ and $W_s$. In particular, we obtain the validity of \eqref{eq hat GM}.
\begin{lemma}\label{lem:darboux}
For all \(0\leq s\leq M\), we have
\begin{align}
    K_s&=(-1)^sD^s\bigl[w^{s-M}D^s(w^M\,\cdot\,)\bigr],
    \label{eq:Rodrigues}\\
    W_s&=(-1)^sw^{s+1}A_s.\label{eq:W-A}
\end{align}
Moreover,
\[
A_sG=\widehat G_s,
\qquad
A_M=(-1)^MD^{2M+1}(w^M\,\cdot\,).
\]
\end{lemma}

\begin{proof}
Introduce
\begin{equation}\label{def delta a}
    \delta_a=-wD-(a+1)w'=-w^{-a}D(w^{a+1}\,\cdot\,).
\end{equation}
Then by definition, \(L_a=\delta_aD\), and we have
\begin{equation}\label{eq:Ddelta}
    D^p\delta_a
    =\delta_{a+p}D^p+p(2a+p+1)D^{p-1}.
\end{equation}
In fact, the case \(p=1\) is
\(D\delta_a=\delta_{a+1}D+2(a+1)\), which can be checked directly, and the general case follows by induction.  Applying \eqref{eq:Ddelta} successively gives
\[
K_s=D^s\delta_{M-s}\delta_{M-s+1}\cdots\delta_{M-1}.
\]
Using the definition of $\delta_a$ in \eqref{def delta a}, we obtain \eqref{eq:Rodrigues}.

Since \(R_a=-\delta_{a-1}\), the same calculation gives
\[
V_{-s,s+1}=w^{s+1}D^{s+1},
\qquad
V_{M-s+1,s}=w^{s-M}D^s(w^M \cdot ).
\]
Combining these identities with \eqref{eq:Rodrigues} proves
\eqref{eq:W-A}.  Finally,
\[
DK_s
=\prod_{j=1}^{s}
\bigl[L+(j+1)(2M+2-j)\bigr]D,
\]
which is exactly the recursion defining \(\widehat G_s\).  Taking \(s=M\)
in \eqref{eq:Rodrigues} gives the last assertion.
\end{proof}

For \(r,d,s\geq 0\) with \(r+d+s=M\), we define
\[
A_{r,d}=\frac{M!}{r!d!s!}(r)_d(M+r)_d,
\qquad
B_{r,d}=\frac{M!}{r!d!s!}(r)_d(M+r+1)_d.
\]
Then a direct cancellation gives
\begin{equation}\label{eq:C-average}
C_{r,d}=\frac{2A_{r,d}+B_{r,d}}{3}.
\end{equation}

The idea of the proof is to establish a polarized version of \eqref{IN IBP}, so in the rest of the appendix, we study the properties of some trilinear forms defined below.

For \((r,d)=(0,0)\), and for
\(1\leq r\leq M\), \(0\leq d\leq M-r\), we define
\begin{equation}\label{def J rd}
J_{r,d}(f,g;h)
=(-1)^s\int_{\R}w^d(U_rf)(U_rg)(W_sh)\,dt,
\end{equation}
where $s=M-r-d$. We call $J_{0,0}(f,g;h)$ the top cell. The semicolon marks the distinguished third argument. Then we define the following trilinear forms:
\begin{align*}
\mathcal H_A(f,g;h)
&=J_{0,0}(f,g;h)
+\sum_{r=1}^{M}\sum_{d=0}^{M-r}A_{r,d}J_{r,d}(f,g;h), %\label{def HA}
\\ 
\mathcal H_B(f,g;h)
&=J_{0,0}(f,g;h)
+\sum_{r=1}^{M}\sum_{d=0}^{M-r}B_{r,d}J_{r,d}(f,g;h).%\label{def HB}
\end{align*}
Both forms are symmetric in their first two arguments. In the next section, we exploit additional symmetric structures of these trilinear forms, which play a central role in the proof.

\subsection{Symmetry results}
\subsubsection{The derivative-transfer identity}
We first give an identity relating $\mathcal H_B$ and $\mathcal H_A$.
\begin{proposition}\label{prop:transfer}
For all \(f,g,h\in C_c^\infty(\R)\),
\begin{equation}\label{eq:transfer}
    \mathcal H_B(f,g;Th)
    +\mathcal H_A(Tf,g;h)
    +\mathcal H_A(f,Tg;h)=0.
\end{equation}
\end{proposition}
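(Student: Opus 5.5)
The plan is to prove \eqref{eq:transfer} cell by cell: expand each of the three terms into the forms $J_{r,d}$ and move every occurrence of $T$ onto a single factor by integration by parts on $\R$. Since $f,g,h\in C_c^\infty(\R)$, $\int_\R T(\cdots)\,dt=0$ with no boundary terms. The basic tools are three facts:
\begin{itemize}
\item the Leibniz rule $T(\phi\psi)=(T\phi)\psi+\phi T\psi$;
\item the identity $Tw=-2xw$;
\item the conjugation rule \eqref{Ra conju}, in the form $T(w^j\phi)=w^j R_j\phi$ and $R_a(w^j\phi)=w^jR_{a+j}\phi$.
\end{itemize}

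\textbf{Step 1: commuting $T$ past $U_r$ and $W_s$.} First I will compute how $T$ commutes with $U_r$ and with $W_s$. Since $U_r=V_{1-r,r}$ and $T=R_0$, one has $U_rT=V_{1-r,r}R_0$ and $TU_r=R_0V_{1-r,r}$. Writing $R_a=T-2ax$ and using $[T,x]=w$ gives $R_aR_b-R_bR_a=2(a-b)w$. This lets us rewrite $TU_r$ in the form
\[
TU_r=V_{0,r+1}+(\text{lower terms}),
\]
where the lower terms are multiples of $w\,U_{r-1}$-type operators. Equivalently, using \eqref{eq:U} directly, $T(w^rD^rf)=w^{r+1}D^{r+1}f-2rxw^rD^rf$, that is $TU_rf=U_{r+1}f/w\cdot w-2rxU_rf$. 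I will keep it in the form $TU_r=U_{r+1}w^{-1}\cdot(\ )-2rx\,U_r$, transported to $t$-variables.

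Similarly, for the distinguished slot I will use \eqref{eq:W-A}, $W_s=(-1)^sw^{s+1}DK_s$, together with the commutation \eqref{eq:Ddelta} to express $W_sT$ through $W_{s+1}$, $W_s$ and $xW_s$.

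\textbf{Step 2: expanding each cell.} Next I will substitute these into each cell of $\mathcal H_A(Tf,g;h)+\mathcal H_A(f,Tg;h)$. In the sum of the two terms, the $-2rx\,U_r$ pieces combine into
\[
-2rx\,w^d\,\partial(U_rf\,U_rg)\text{-type expressions}.
\]
Integrating by parts, $T$ is transferred onto $w^dW_sh$. This produces the following:
\begin{itemize}
\item terms $J_{r+1,d'}$ from raising $r$;
\item terms with $x\,w^d$ factors, which I will eliminate using $Tw^d=-2dxw^d$ and $x^2=1-w$, turning $x^2$ into shifts $d\mapsto d+1$;
\item terms involving $T$ on $W_sh$, which recombine with $\mathcal H_B(f,g;Th)$.
\end{itemize}
The result is a linear relation among the $J_{r,d}(f,g;h)$ and the $\tilde J$'s containing $x$. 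I expect the $x$-odd terms to pair off exactly, because both sides must be invariant under $t\mapsto -t$.

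\textbf{Step 3: matching coefficients.} The final step checks that every coefficient vanishes. For each $(r,d)$ with $r+d+s=M$ one gets a three- or four-term linear identity between $A_{r,d}$, $A_{r-1,d}$, $A_{r,d-1}$ and $B_{r,d}$, $B_{r,d-1}$. These should follow from the Pochhammer recurrences $(r)_{d+1}=(r)_d(r+d)$ and $(M+r+1)_d=(M+r)_d\frac{M+r+d}{M+r}$, together with the multinomial factor $\frac{M!}{r!d!s!}$. The top cell $J_{0,0}$ and the boundary cells $s=0$ and $r=M$ need separate checks.

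\textbf{Main obstacle.} I expect the main obstacle to be Step 1 for the distinguished argument: getting a clean formula for $W_sT$ in terms of $W_{s+1}$ and $W_s$ with explicit coefficients. The plan there is to first prove the operator identity
\[
W_sT=-w^{-1}W_{s+1}+c_s\,xW_s
\]
with explicit $c_s$, verifying it on Gegenbauer-type test functions $w^{-M}\cdot$polynomials, and only then do the bookkeeping. If the coefficients do not close, I would fall back on checking the identity for $f,g,h$ ranging over $\tilde F'_k$-type eigenfunctions pulled back to $\R$, which reduces it to a finite combinatorial identity.
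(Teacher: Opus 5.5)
Your outline (cell-by-cell expansion, integration by parts on $\R$ with no boundary terms, coefficient matching) has the right overall shape. However, the identity you single out as the crux is false, and the mechanism that actually produces the cancellation is missing.

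\textbf{The proposed identity for $W_sT$ fails.} The identity $W_sT=-w^{-1}W_{s+1}+c_s\,xW_s$ cannot hold for any $c_s$. Indeed, $W_s=V_{-s,s+1}V_{M-s+1,s}$ is a product of $2s+1$ first-order factors $R_a$, so $W_sT$ has order $2s+2$ while $W_{s+1}$ has order $2s+3$. Already for $s=0$, $W_0T=T^2$ is second order but $W_1=R_{-1}R_0R_M$ is third order. Moving $T$ through the distinguished slot lowers $s$ rather than raising it. From $[T,R_a]=-2aw$ and $R_aw=wR_{a+1}$ one gets $[T,V_{a,n}]=-n(2a+n-1)wV_{a+1,n-1}$, hence
$[T,U_r]=r(r-1)wU_{r-1}$ and $[T,W_s]=-2s(M-s)\,wR_{(1-s)/2}W_{s-1}$.

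\textbf{Other parts of Steps 2--3 do not close either.} If you substitute $U_rT=U_{r+1}-2rxU_r-r(r-1)wU_{r-1}$ into $\mathcal H_A(Tf,g;h)$ at the outset, you get unbalanced products $w^d(U_{r+1}f)(U_rg)(W_sh)$. These are not $J$-cells, and Step 2 does not say how they are reabsorbed. The parity heuristic gives nothing: $t\mapsto-t$ maps each of the three terms of \eqref{eq:transfer} to itself, so it forces no cancellation among terms carrying an explicit $x$. In fact those terms do not cancel. The eigenfunction fallback is not a finite check either: the form is trilinear, so it would need linearization coefficients of triple products.

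\textbf{What closes the argument.} Write $F_r=U_rf$, $G_r=U_rg$, $H_s=W_sh$, $\varepsilon_s=(-1)^s$ and $\Delta_{r,d}=B_{r,d}-A_{r,d}$, and keep $TF_r$ intact rather than expanding it.
\begin{itemize}
\item By the two commutators, the $A_{r,d}$-parts of the three terms assemble into the total derivative $\varepsilon_sA_{r,d}T(w^dF_rG_rH_s)$.
\item Each cell then leaves the residual $\varepsilon_s$ times
$-A_{r,d}T(w^d)F_rG_rH_s+\Delta_{r,d}w^dF_rG_rTH_s-A_{r,d}r(r-1)w^{d+1}(F_{r-1}G_r+F_rG_{r-1})H_s+2s(r+d)B_{r,d}w^{d+1}F_rG_rR_{(1-s)/2}H_{s-1}$.
\item These residuals do not vanish within a cell; they cancel across three neighbouring cells. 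The last residual of cell $(r,d,s)$ plus the first two of cell $(r,d+1,s-1)$ collapse, via $\Delta_{r,d+1}=s(r+d)B_{r,d}$ and $(d+1)A_{r,d+1}=(M+r)\Delta_{r,d+1}$, to $\varepsilon_s\Delta_{r,d+1}w^{d+1}F_rG_rR_{2r+d+1}H_{s-1}$. The explicit $x$'s are absorbed into $R_{2r+d+1}$.
\item Write $w^{d+1}R_{2r+d+1}=R_{2r}(w^{d+1}\,\cdot\,)$ and use $R_a^*=-R_{-a}$ in $L^2(dt)$. Together with your own correct formula $TU_r=U_{r+1}-2rxU_r$, i.e.\ $R_{-r}F_r=F_{r+1}$, this turns the packet into $-\varepsilon_s\Delta_{r,d+1}w^{d+1}(F_rG_{r+1}+F_{r+1}G_r)H_{s-1}$.
\item That term cancels the $[T,U_{r+1}]$-residual of cell $(r+1,d,s-1)$, because $\Delta_{r,d+1}=r(r+1)A_{r+1,d}$.
\end{itemize}
These three relations between $A$ and $B$ on adjacent cells are what Step 3 must verify, not single-cell Pochhammer recurrences.
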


\begin{proof}
For simplicity, we write
\[
s=M-r-d,
\qquad
F_r=U_rf,
\qquad
G_r=U_rg,
\qquad
H_s=W_sh,
\qquad
\varepsilon_s=(-1)^s,
\]
and put \(\Delta_{r,d}=B_{r,d}-A_{r,d}\).  Then a direct computation gives the following relations:
\begin{align}
    \Delta_{r,d+1}&=s(r+d)\,B_{r,d},\label{eq:coef1}\\
    (d+1)A_{r,d+1}&=(M+r)\Delta_{r,d+1},\label{eq:coef2}\\
    \Delta_{r,d}&=r(r+1)A_{r+1,d-1}\qquad(d\geq 1).
    \label{eq:coef3}
\end{align}
We also need the following commutator identities:
\begin{align}
    [T,U_r]&=r(r-1)wU_{r-1},\label{eq:comm-U}\\
    [T,W_s]&=-2s(M-s)\,wR_{(1-s)/2}W_{s-1}\text{ for }s\geq 1,
    \label{eq:comm-W}\\
    [T,W_0]&=0\notag,
\end{align}
where $[A,B]:=AB-BA$ denotes the commutator.
Indeed, using the facts that  \([T,R_a]=-2aw\) and \(R_aw=wR_{a+1}\), we obtain
\[
[T,V_{a,n}]=-n(2a+n-1)wV_{a+1,n-1}
\]
by induction. This gives \eqref{eq:comm-U}. The second identity \eqref{eq:comm-W} can be obtained similarly.

We now expand the left-hand side of \eqref{eq:transfer}. For a cell $r\ge 1$, using \eqref{eq:comm-U} and \eqref{eq:comm-W}, we find that after removing the integral of the total derivative
\(\varepsilon_sA_{r,d}T(w^dF_rG_rH_s)\), the remaining integrand has the following form
\begin{align}
    \varepsilon_sw^d\Bigl\{
    &-A_{r,d}\frac{T(w^d)}{w^d}F_rG_rH_s
    +\Delta_{r,d}F_rG_r(TH_s)-A_{r,d}r(r-1)w(F_{r-1}G_r+F_rG_{r-1})H_s\notag\\
    &+2s(r+d)\,B_{r,d}wF_rG_rR_{(1-s)/2}H_{s-1}
    \Bigr\}.\label{eq:residual}
\end{align}
Moreover, the top cell $(r,d)=(0,0)$ has no residual.

Fix a cell with \(s\geq 1\). We combine the last line of \eqref{eq:residual} with the first two terms belonging to the cell \((r,d+1,s-1)\).  By relations \eqref{eq:coef1}--\eqref{eq:coef2}, their sum is
\begin{equation}\label{eq:packet}
    \varepsilon_s\Delta_{r,d+1}w^{d+1}
    F_rG_rR_{2r+d+1}H_{s-1}.
\end{equation}
In \(L^2(dt)\), \(R_a^*=-R_{-a}\). Using the property \eqref{Ra conju} and the fact that
\[
R_{-2r}(F_rG_r)
=F_rR_{-r}G_r+G_rR_{-r}F_r,
\]
an integration-by-parts computation transforms
\eqref{eq:packet} into
\[
-\varepsilon_s\Delta_{r,d+1}w^{d+1}
(F_rG_{r+1}+F_{r+1}G_r)H_{s-1}.
\]
By \eqref{eq:coef3}, this exactly cancels the second line in \eqref{eq:residual} from the cell \((r+1,d,s-1)\). Therefore every residual appears in exactly one such packet. Hence the total residual is zero. In other words, the left-hand side of \eqref{eq:transfer} is an integral of a total derivative, so it is zero. This completes the proof.
\end{proof}

\subsubsection{The critical symmetric form}

In this subsection, we prove a symmetry statement needed for the defect between \(\mathcal H_B\) and \(\mathcal H_A\).  Throughout this subsection we assume \(M\geq 2\). For \(\ell\geq 0\), we define the polynomial
\[
P_\ell(z)=\prod_{\nu=2}^{\ell+1}
\bigl[z+\nu(2M+3-\nu)\bigr],
\qquad
P_0=1.
\]
For \(0\leq j\leq M-2\), define
\begin{equation}\label{eq:Sj}
S_j(z)=\sum_{e=0}^{M-2-j}\binom{M-2-j}{e}
(j+2)_e(M+j+2)_eP_{M-2-j-e}(z),
\end{equation}
and introduce the trilinear form
\begin{equation*}%\label{eq:Phi}
\Phi(F,G,H)
=\sum_{j=0}^{M-2}\binom{M-2}{j}
\int_{-1}^{1}w^{M+1+j}F^{(j)}G^{(j)}
S_j(L)H\,dx.
\end{equation*}

The following proposition is the central algebraic step of the proof. It shows that the form appearing naturally in the coefficient defect has more symmetry than is visible from its definition.

\begin{proposition}\label{prop:critical}
The trilinear form \(\Phi\) is invariant under every permutation of its
three arguments.
\end{proposition}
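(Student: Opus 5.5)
Since $\Phi$ is visibly symmetric in $(F,G)$, it suffices to prove invariance under the single transposition of the second and third arguments, $\Phi(F,G,H)=\Phi(F,H,G)$. The plan is to do this spectrally. Every term of $\Phi$ is trilinear and continuous in $H^{N/2}$-type norms, so we may take $F,G,H$ to be Gegenbauer polynomials adapted to the weight $w^{M+1}$. Concretely, take $F=F^{\nu}_{k}$, $G=F^{\nu}_{l}$, $H=F^{\nu}_{m}$ with $\nu=M+\tfrac32$, so that $L=L_{M+1}$ acts diagonally: $LF_m^\nu=m(m+2M+2)F_m^\nu$.

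The first step is to put $S_j(L)$ into closed form on eigenfunctions. Write $z=m(m+2M+2)$. Each factor of $P_\ell$ is $z+\nu(2M+3-\nu)=(m+\nu)(m+2M+2-\nu)$, so $P_\ell(z)$ factors as a product of two Pochhammer symbols in $m$. The sum \eqref{eq:Sj} is then a terminating hypergeometric sum of Vandermonde/Saalschütz type, and I expect it to collapse to a product such as $(m-j)_{M-2-j}\,(m+M+j+4)_{M-2-j}$ up to normalisation. This is the same mechanism that produced the coefficients $C_{r,d}$ via \eqref{eq:C-average}.

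The second step handles the derivative weights. By the derivative formula \eqref{201}, $F^{(j)}$ is a multiple of $F^{\nu+j}_{k-j}$. The term $\int w^{M+1+j}F^{(j)}G^{(j)}S_j(L)H\,dx$ therefore becomes an explicit scalar times the triple integral $T_j(k,l,m)=\int w^{\nu+j-1/2}F^{\nu+j}_{k-j}F^{\nu+j}_{l-j}F^\nu_m\,dx$. The mismatch of orders is a problem. To avoid it, I would use Lemma \ref{lem:darboux} ($W_s=(-1)^sw^{s+1}A_s$, $U_r=w^rD^r$) to rewrite each term, after $j$ integrations by parts moving the derivatives, as an integral of the form $\int w^{M+1}F\,\mathcal O_j(G)\,H$. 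Here $\mathcal O_j$ is an explicit differential operator. The identity $\Phi(F,G,H)=\Phi(F,H,G)$ then reduces to the operator statement that $\mathcal O:=\sum_j\binom{M-2}{j}\mathcal O_j$, viewed as a bilinear "product" $(G,H)\mapsto \ldots$, is symmetric.

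An equivalent and cleaner route works in the $t$-variable of the Appendix. There, using the commutator identities \eqref{eq:comm-U}, \eqref{eq:comm-W} and $R_a^*=-R_{-a}$, one moves $S_j(L)$ off $H$ by writing $S_j(L)$ as a product of $V$-operators. This mirrors the packet-cancellation argument in Proposition \ref{prop:transfer}: one shows that $\Phi(F,G,H)-\Phi(F,H,G)$ telescopes into total derivatives.

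I would attempt the telescoping route first, because it reuses the machinery of Proposition \ref{prop:transfer}. The main obstacle is the first step combined with the bookkeeping. We must identify $S_j$ as a product so that $S_j(L)$ factors as $V$-operators compatible with $U_j$, and then verify the analogues of the coefficient relations \eqref{eq:coef1}--\eqref{eq:coef3} for the coefficients $\binom{M-2}{j}\binom{M-2-j}{e}(j+2)_e(M+j+2)_e$. If the telescoping fails to close, the fallback is a direct check on the monomial/Gegenbauer basis. In that check, the triple products are reduced by linearisation formulas, and symmetry in $(l,m)$ of the resulting finite hypergeometric expression is proved by a Whipple-type transformation.
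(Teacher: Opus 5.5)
Your first reduction is sound and matches the paper. Symmetry in the first two slots plus the transposition of slots two and three generates $S_3$. After moving the $j$ derivatives off the fixed argument, the claim becomes the pointwise symmetry $\mathcal R(G,H)=\mathcal R(H,G)$ of the bilinear expression $\mathcal R(F,G)=w^{-M-1}\sum_j\binom{M-2}{j}(-D)^j[w^{M+1+j}(S_j(L)F)G^{(j)}]$, because $\Phi(F,G,H)=\int_{-1}^1 w^{M+1}F\,\mathcal R(H,G)\,dx$.

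\textbf{Where it fails.} The gap is in how you propose to prove that symmetry, and your first step is false. The correct eigenvalue is $LC^{(M+3/2)}_m=m(m+2M+3)C^{(M+3/2)}_m$, not $m(m+2M+2)$. With it, each factor of $P_\ell$ does split as $(m+\nu)(m+2M+3-\nu)$, but the combination $S_j$ does not collapse to a product. Take $M=3$, $j=0$:
\begin{itemize}
\item $S_0(z)=P_1(z)+10=z+24$, which on the spectrum is $m^2+9m+24$.
\item This has negative discriminant, so it is not your guess $m(m+7)$, nor any product of real linear factors in $m$.
\item Consequently $S_0(L)=L+24$ is not of the form $L+\nu(2M+3-\nu)$.
\end{itemize}
So $S_j(L)$ is not one of the products $P_\ell(L)$ that Lemma~\ref{lem:darboux} factors into $V$-operators. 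The telescoping route as you describe it, which needs $S_j$ identified as a product, cannot start.

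You could treat each $P_\ell(L)$ separately. But then the whole difficulty moves into cancellations among the coefficients $\binom{M-2-j}{e}(j+2)_e(M+j+2)_e$, and you do not identify these; the analogues of \eqref{eq:coef1}--\eqref{eq:coef3} are never written down. Your fallback is likewise only a hope. It relies on linearizing mixed-order triple Gegenbauer integrals plus an unspecified Whipple-type transformation, and the order mismatch $\nu+j$ versus $\nu$ is exactly what makes that hard.

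\textbf{What the paper does instead.} It handles the linear combination without any closed form, and it needs only two eigenfunctions, not three.
\begin{itemize}
\item The weighted Leibniz rule (Lemma~\ref{lem:weighted-Leibniz}) says that if $L_aG=\eta G$, then $w^{-a}(-D)^j[w^{a+j}FG^{(j)}]=\sum_{k}(-1)^k\binom jk\prod_{h=k}^{j-1}[\eta-h(2a+h+1)]\,w^kF^{(k)}G^{(k)}$.
\item With $a=M+1$, $LF=\xi F$, $LG=\eta G$, this gives $\mathcal R(F,G)=\sum_k c_k(\xi,\eta)\,w^kF^{(k)}G^{(k)}$. That is a combination of manifestly symmetric products, so everything reduces to the polynomial identity $c_k(\xi,\eta)=c_k(\eta,\xi)$.
\item That identity comes from expanding $S_{k+r}$ in the Newton basis $p_s$ on the grid $\theta_h=h(2M+h+3)$, using the connection formula for $P_\ell$. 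This gives coefficients $\binom{n-r}{s}E_q(\alpha,\beta)$.
\item The exchange identity $E_q(\alpha,\beta)=E_q(\beta,\alpha)$, from Euler's transformation of ${}_2F_1$, then gives the symmetry. Density of polynomials finishes the proof.
\end{itemize}
Your proposal has nothing playing the role of either the Leibniz reduction or this exchange identity, so as it stands it does not prove the proposition.
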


\begin{proof}
Define
\[
\mathcal R(F,G)
=w^{-M-1}\sum_{j=0}^{M-2}\binom{M-2}{j}(-D)^j
\bigl[w^{M+1+j}(S_j(L)F)G^{(j)}\bigr]
\]
and
\begin{align}
    \mathcal T(F,G,H)
    &=\int_{-1}^{1}w^{M+1}H\mathcal R(F,G)\,dx\notag\\
    &=\sum_{j=0}^{M-2}\binom{M-2}{j}
    \int_{-1}^{1}w^{M+1+j}(S_j(L)F)
    G^{(j)}H^{(j)}\,dx.\label{eq:Tcritical}
\end{align}
The second formula follows by integrating by parts, where we note that there are no boundary terms. It shows immediately that \(\mathcal T\) is symmetric in its last two arguments. Our strategy is to prove that \(\mathcal R(F,G)=\mathcal R(G,F)\); this will make \(\mathcal T\), and hence \(\Phi\), fully symmetric.

The first ingredient is a weighted Leibniz rule.  We state it for the
general operator \(L_a\).

\begin{lemma}\label{lem:weighted-Leibniz}
    Let \(a\in\R\), and suppose that \(L_aG=\eta G\) for some $\eta\in\R$.  Then, for every
    \(j\geq 0\),
    \begin{equation}\label{eq:weighted-Leibniz}
        w^{-a}(-D)^j\bigl[w^{a+j}FG^{(j)}\bigr]
        =\sum_{k=0}^{j}(-1)^k\binom jk
        \left(\prod_{h=k}^{j-1}
        \bigl[\eta-h(2a+h+1)\bigr]\right)
        w^kF^{(k)}G^{(k)}.
    \end{equation}
\end{lemma}

\begin{proof}
    Recall from \eqref{def delta a} that
    \[
    \delta_a=-wD+2(a+1)x=-w^{-a}D(w^{a+1}\,\cdot\,),
    \qquad
    L_a=\delta_aD.
    \]
    The commutation relation \eqref{eq:Ddelta} gives
    \begin{equation}\label{eq:DjLa}
        D^jL_a=\bigl[L_{a+j}+j(2a+j+1)\bigr]D^j,
    \end{equation}
    and direct telescoping gives
    \[
    w^{-a}(-D)^j(w^{a+j}\cdot)
    =\delta_a\delta_{a+1}\cdots\delta_{a+j-1}.
    \]
    We prove \eqref{eq:weighted-Leibniz} by induction on \(j\), simultaneously for all $a$.  The case \(j=0\) is immediate.  If the assertion holds at
    order \(j\), then \eqref{eq:DjLa} implies
    \[
    L_{a+j}G^{(j)}
    =\bigl[\eta-j(2a+j+1)\bigr]G^{(j)}.
    \]
    Using \(\delta_b(FQ)=F\delta_bQ-wF'Q\), we obtain
    \[
    \delta_{a+j}(FG^{(j+1)})
    =\bigl[\eta-j(2a+j+1)\bigr]FG^{(j)}
    -wF'G^{(j+1)}.
    \]
    Apply \(\delta_a\cdots\delta_{a+j-1}\) to this identity.  For the second
    term, we use
    \[
    \delta_a\cdots\delta_{a+j-1}(wQ)
    =w\,\delta_{a+1}\cdots\delta_{a+j}Q
    \]
    and apply the induction hypothesis with parameter \(a+1\) to the pair
    \(F',G'\).  Since
    \[
    L_{a+1}G'=(\eta-2a-2)G'
    \]
    and
    \[
    (\eta-2a-2)-h(2a+h+3)
    =\eta-(h+1)(2a+h+2),
    \]
    the two resulting sums have the same products after shifting the index in the second one.  Combining their binomial 	coefficients then gives \eqref{eq:weighted-Leibniz} at order \(j+1\).
\end{proof}

We apply the lemma with \(a=M+1\).  Suppose that
\[
LF=\xi F,
\qquad
LG=\eta G,
\]
where \(\xi\) and \(\eta\) denote eigenvalues.  Put
\[
\theta_h=h(2M+h+3).
\]
Then Lemma~\ref{lem:weighted-Leibniz} gives
\begin{equation}\label{eq:R-eigen}
    \mathcal R(F,G)
    =\sum_{k=0}^{M-2}w^kF^{(k)}G^{(k)}c_k(\xi,\eta),
\end{equation}
where
\begin{equation}\label{eq:Ck}
    c_k(\xi,\eta)=(-1)^k\sum_{j=k}^{M-2}
    \binom{M-2}{j}\binom jk
    S_j(\xi)\prod_{h=k}^{j-1}(\eta-\theta_h).
\end{equation}
It remains to prove that every coefficient \(c_k\) is symmetric in its two
variables.

Fix \(k\), set \(n=M-2-k\), and introduce the Newton polynomials on the grid \(\theta_h\):
\[
p_s(z)=\prod_{h=k}^{k+s-1}(z-\theta_h),
\qquad
p_0=1.
\]
Then \eqref{eq:Ck} becomes
\begin{equation}\label{eq:Ck-Newton}
    c_k(\xi,\eta)=(-1)^k\binom{M-2}{k}
    \sum_{r=0}^{n}\binom nr S_{k+r}(\xi)p_r(\eta).
\end{equation}
Write
\begin{equation}\label{eq:a-expansion}
    S_{k+r}(z)=\sum_{s=0}^{n-r}a_{r,s}p_s(z).
\end{equation}
To compute these coefficients, we use the connection formula
\begin{equation}\label{eq:connection}
    P_\ell(z)=\sum_{s=0}^{\ell}\binom\ell s
    (k+s+2)_{\ell-s}(2M+2+k+s-\ell)_{\ell-s}p_s(z).
\end{equation}
Indeed, for \(\varphi(q)=q(q+2M+3)\), we have the recursion formulas
\[
P_{\ell+1}(z)=[z-\varphi(-\ell-2)]P_\ell(z),
\qquad
zp_s(z)=p_{s+1}(z)+\varphi(k+s)p_s(z).
\]
If \(b_{\ell,s}\) denotes the coefficient of \(p_s\) on the right-hand
side of \eqref{eq:connection}, then these two relations give the recursion
\[
b_{\ell+1,s}=b_{\ell,s-1}
+\bigl[\varphi(k+s)-\varphi(-\ell-2)\bigr]b_{\ell,s}.
\]
It is straightforward to check that the displayed expression for \(b_{\ell,s}\) satisfies this recurrence, and the initial value is \(b_{0,0}=1\).
Thus \eqref{eq:connection} holds for every \(\ell\).

For fixed \(r,s\), put
\[
q=n-r-s,
\qquad
\alpha=k+r+2,
\qquad
\beta=k+s+2.
\]
Substituting \eqref{eq:connection} into \eqref{eq:Sj} and using
\[
\binom{n-r}{e}\binom{n-r-e}{s}
=\binom{n-r}{s}\binom qe,
\]
we obtain
\begin{equation}\label{eq:a-E}
    a_{r,s}=\binom{n-r}{s}E_q(\alpha,\beta),
\end{equation}
where
\begin{equation}\label{eq:E}
    E_q(\alpha,\beta)
    =\sum_{e=0}^{q}\binom qe
    (\alpha)_e(\alpha+M)_e
    (\beta)_{q-e}(\alpha+\beta+M+e)_{q-e}.
\end{equation}

It remains to prove the finite hypergeometric identity that
exchanges the two indices.

\begin{lemma}\label{lem:exchange}
    For every \(q\geq 0\) and all parameters for which \eqref{eq:E} is defined, we have
    \[
    E_q(\alpha,\beta)=E_q(\beta,\alpha).
    \]
\end{lemma}

\begin{proof}
    Recall the formal hypergeometric series
    \[
    {}_2F_1\!\left(\begin{matrix}a,b\\c\end{matrix};t\right)
    =\sum_{e=0}^{\infty}\frac{(a)_e(b)_e}{(c)_e e!}t^e.
    \]
    Then formula \eqref{eq:E} is equivalent to
    \begin{align}
        \frac{E_q(\alpha,\beta)}{q!}
        &=(\alpha+\beta+M)_q[t^q](1-t)^{-\beta}\cdot
        {}_2F_1\!\left(
        \begin{matrix}\alpha,\alpha+M\\\alpha+\beta+M\end{matrix};t
        \right).\label{eq:E-generating}
    \end{align}
    Here $[t^q]f(t)$ means the coefficient of $t^q$ in the Taylor expansion of $f(t)$.
    
    Euler's transformation
    \begin{equation}\label{eq:Euler}
        {}_2F_1\!\left(\begin{matrix}a,b\\c\end{matrix};t\right)
        =(1-t)^{c-a-b}
        {}_2F_1\!\left(\begin{matrix}c-a,c-b\\c\end{matrix};t\right)
    \end{equation}
    is valid as an identity of formal power series whenever the denominator parameters are defined.  Indeed, both sides have constant term $1$ and satisfy the same hypergeometric differential equation
    \[
    t(1-t)y''+[c-(a+b+1)t]y'-aby=0.
    \]
In our case, the open parameter region is $\alpha+\beta+M\notin\{0,-1,-2,\ldots\}$.
    
    Applying \eqref{eq:Euler} to \eqref{eq:E-generating} changes the series
    there into
    \[
    (1-t)^{-\alpha}
    {}_2F_1\!\left(
    \begin{matrix}\beta,\beta+M\\\alpha+\beta+M\end{matrix};t
    \right),
    \]
    which is the corresponding generating series for
    \(E_q(\beta,\alpha)\).

    It remains to remove the temporary restriction on parameters. In fact,  multiplying the coefficient identity by the common denominator, both sides become a polynomial in $\alpha$, $\beta$ and $M$. Thus the original equality must hold identically.  
\end{proof}

Since $\alpha$ and $\beta$ are positive integers in \eqref{eq:E}, by applying Lemma~\ref{lem:exchange} to \eqref{eq:a-E}, we obtain
\[
\binom nr a_{r,s}
=\frac{n!}{r!s!q!}E_q(\alpha,\beta)
=\binom ns a_{s,r}.
\]
Substituting this relation into \eqref{eq:Ck-Newton} and
\eqref{eq:a-expansion} proves $c_k(\xi,\eta)=c_k(\eta,\xi)$.
Thus \eqref{eq:R-eigen} is symmetric whenever \(F\) and \(G\) are
polynomial eigenfunctions of \(L\).

We now pass from eigenfunctions to arbitrary polynomials.  By \eqref{DE}, we know that
\begin{equation*}%\label{eq:Gegenbauer-eigen}
    LC_d^{(M+3/2)}
    =d(d+2M+3)C_d^{(M+3/2)}.
\end{equation*}
  Since \(C_d^{(M+3/2)}\) has degree \(d\), these
eigenpolynomials form a basis of the polynomial ring.  Bilinearity and
\eqref{eq:R-eigen} therefore imply
\[
\mathcal R(F,G)=\mathcal R(G,F)
\]
for all polynomial pairs. We now justify the passage to smooth functions. In fact, a repeated application of $L$ shows that the $k$-th Gegenbauer partial sum of a smooth function $F$ converges to $F$ in $C^\ell[-1,1]$ for any $\ell$. On the other hand, the formula defining $\mathcal R$ is a finite bilinear differential expression of order at most $2M$ with smooth polynomial coefficients.  It is therefore continuous from $C^{2M}\times C^{2M}$ to $C^0$.  Taking polynomial partial sums for both $F$ and $G$ proves $\mathcal{R}(F,G)=\mathcal{R}(G,F)$ for arbitrary smooth functions.  

It follows that \(\mathcal T\) is symmetric in its first two arguments,
and \eqref{eq:Tcritical} already shows symmetry in its last two arguments.
Therefore \(\mathcal T\) is fully symmetric.  Since $\Phi(F,G,H)=\mathcal T(H,F,G)$, \(\Phi\) is also fully symmetric as claimed.
\end{proof}

\subsection{The symmetric defect}

We next identify the difference between the two trilinear forms.  The
coefficient arrays agree on the bottom edge \(d=0\), and we will see that all remaining terms combine exactly into the form constructed in the preceding subsection.

We define the defect
\[
\Delta(f,g;h)=\mathcal H_B(f,g;h)-\mathcal H_A(f,g;h).
\]
The following lemma identifies the defect with the symmetric form.

\begin{lemma}\label{lem:defect}
If \(M\geq 2\), then
\begin{equation}\label{eq:defect-bridge}
    \Delta(f,g;h)=M(M-1)\Phi(f',g',h').
\end{equation}
For \(M=1\), \(\Delta=0\).
\end{lemma}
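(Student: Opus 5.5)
The plan is to compute the defect cell by cell, convert each surviving cell from the $t$-variable back to $x$, and then match coefficients with the definition of $\Phi$.

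\textbf{Step 1: isolate the surviving cells.} By definition,
\[
\Delta=\sum_{r=1}^{M}\sum_{d=0}^{M-r}(B_{r,d}-A_{r,d})J_{r,d},
\]
since the top cell $J_{0,0}$ enters $\mathcal H_A$ and $\mathcal H_B$ with coefficient $1$ and cancels. On the bottom edge $d=0$ the Pochhammer factors are $1$, so $A_{r,0}=B_{r,0}$. Hence only the cells with $r\ge 1$ and $d\ge 1$ survive. For $M=1$ there are none, which gives $\Delta=0$ immediately. For $M\ge2$ I would rewrite each surviving coefficient using \eqref{eq:coef3}, $\Delta_{r,d}=r(r+1)A_{r+1,d-1}$. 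Expanding with $r=j+1$, $d=e+1$ and $s=M-2-j-e$, I expect
\[
\Delta_{j+1,e+1}=(j+1)(j+2)\frac{M!}{(j+2)!\,e!\,s!}(j+2)_e(M+j+2)_e
=M(M-1)\binom{M-2}{j}\binom{M-2-j}{e}(j+2)_e(M+j+2)_e .
\]
These are exactly the binomial and Pochhammer weights appearing in $\binom{M-2}{j}S_j$ from \eqref{eq:Sj}. The ranges also match: $d\ge1$ forces $r\le M-1$, so $0\le j\le M-2$ and $0\le e\le M-2-j$.

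\textbf{Step 2: rewrite each $J_{r,d}$ in $x$.} I would use \eqref{eq:U}, namely $U_r=w^rD^r$, together with \eqref{eq:W-A}, namely $W_s=(-1)^s w^{s+1}A_s$, and $dt=dx/w$. The two factors $(-1)^s$ cancel, and the powers of $w$ add to $d+2r+s+1-1=M+r$. This gives
\[
J_{r,d}(f,g;h)=\int_{-1}^1 w^{M+r}f^{(r)}g^{(r)}\,(K_sh)'\,dx .
\]
From the proof of Lemma~\ref{lem:darboux} we have $DK_s=\prod_{\ell=1}^{s}[L+(\ell+1)(2M+2-\ell)]D$. Reindexing with $\nu=\ell+1$, this product is exactly $P_s(L)$, so $(K_sh)'=P_s(L)h'$. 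Setting $F=f'$, $G=g'$, $H=h'$ and $j=r-1$, the cell becomes
\[
J_{r,d}=\int_{-1}^1 w^{M+1+j}F^{(j)}G^{(j)}P_{M-2-j-e}(L)H\,dx .
\]

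\textbf{Step 3: sum.} Summing Step 2 over $e$ with the weights from Step 1, the inner sum reproduces $S_j(L)H$. The outer sum over $j$, with the factor $\binom{M-2}{j}$, then gives $\Phi(f',g',h')$ multiplied by $M(M-1)$, which is \eqref{eq:defect-bridge}.

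\textbf{Expected obstacle.} The identity itself is pure bookkeeping; the main risk is an error in signs or exponents when passing between $dt$ and $dx$. I would check this on the smallest case $M=2$, where there is a single surviving cell $(r,d)=(1,1)$. A second point needing care is that the forms were defined on $C_c^\infty(\R)$ in the $t$-variable but are being compared on $[-1,1]$. I would argue that the pullback of a function in $C_c^\infty(\R)$ vanishes near $x=\pm1$, so no boundary terms appear. Since both sides are continuous in the relevant $C^{2M}$ topology, the identity should then extend to the smooth functions where it is later used.
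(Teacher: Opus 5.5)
Your proposal is correct and follows the paper's own proof essentially step for step. You discard the $d=0$ cells via $A_{r,0}=B_{r,0}$, rewrite $B_{r,d}-A_{r,d}=r(r+1)A_{r+1,d-1}$ as $M(M-1)$ times the weights of $\binom{M-2}{j}S_j$, pass each cell to the $x$-variable with $U_r=w^rD^r$, $W_s=(-1)^sw^{s+1}A_s$ and $dt=dx/w$, and identify $DK_s=P_s(L)D$. Your concern about boundary terms does not arise, since the lemma uses only the change of variables $x=\tanh t$ and no integration by parts.
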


\begin{proof}
For \(d=0\), the definitions give \(B_{r,0}=A_{r,0}\).  For
\(d\geq 1\), direct cancellation gives
\begin{equation}\label{eq:defect-coef}
    \begin{aligned}
        B_{r,d}-A_{r,d}=&r(r+1)A_{r+1,d-1}\\
        =&M(M-1)\binom{M-2}{r-1}\binom{M-1-r}{d-1}
        (r+1)_{d-1}(M+r+1)_{d-1}.
    \end{aligned}
\end{equation}
By Lemma~\ref{lem:darboux}, every non-top cell has the following expression
in the \(x\)-variable:
\begin{equation*}%\label{eq:J-x}
    J_{r,d}(f,g;h)
    =\int_{-1}^{1}w^{M+r}f^{(r)}g^{(r)}DK_{M-r-d}h\,dx.
\end{equation*}
Consequently, only cells with \(d\geq1\) contribute to \(\Delta\).

The operator $DK_{M-r-d}$ has exactly the product occurring in \eqref{eq:Sj}.  Indeed, we note that \(D(p(L_M))=p(L+2M+2)D\) for every polynomial \(p\). Then for $s=M-r-d$, we have
\begin{align*}
    DK_s
    &=\prod_{\ell=1}^{s}
    \bigl[L+(\ell+1)(2M+2-\ell)\bigr]D\\
    &=\prod_{p=2}^{s+1}
    \bigl[L+p(2M+3-p)\bigr]D
    =P_s(L)D.
\end{align*}

Substituting these identities and \eqref{eq:defect-coef} into the defect
sum, we obtain
\begin{align*}
 \Delta(f,g;h)
 &=M(M-1)\sum_{j=0}^{M-2}\binom{M-2}{j}
 \int_{-1}^1w^{M+1+j}(f')^{(j)}(g')^{(j)}\\
 &\quad\times\left[\sum_{e=0}^{M-2-j}\binom{M-2-j}{e}
 (j+2)_e(M+j+2)_eP_{M-2-j-e}(L)\right]h'\,dx.
\end{align*}
The operator in square brackets is exactly $S_j(L)$ by definition. By letting $j=r-1$ and $e=d-1$, we see that this is exactly \(M(M-1)\Phi(f',g',h')\) by definition. This proves \eqref{eq:defect-bridge}. If $M=1$, there is no index pair with $d\ge1$, so the defect is zero.
\end{proof}

In summary, Proposition~\ref{prop:critical} and Lemma~\ref{lem:defect} show that \(\Delta\) is fully symmetric.

We close this section by proving a symmetry property of the following trilinear form:
\begin{equation}\label{eq:cyclic-form}
\mathcal C(f,g,h)
=\mathcal H_B(f,g;h)
+\mathcal H_A(g,h;f)
+\mathcal H_A(h,f;g).
\end{equation}

\begin{proposition}\label{prop:translation}
For all compactly supported smooth \(f,g,h\), we have
\begin{equation}\label{eq:translation}
    \mathcal C(Tf,g,h)+\mathcal C(f,Tg,h)+\mathcal C(f,g,Th)=0.
\end{equation}
\end{proposition}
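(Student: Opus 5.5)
The plan is to expand $\mathcal C$ and regroup the nine terms so that Proposition~\ref{prop:transfer} and the symmetry of the defect $\Delta$ can be applied. Write $X=\mathcal C(Tf,g,h)+\mathcal C(f,Tg,h)+\mathcal C(f,g,Th)$. Each $\mathcal C$ consists of three terms, one for each choice of distinguished (third) argument. So $X$ splits into three groups according to which of $f,g,h$ sits in the distinguished slot. For example, the group with $h$ distinguished is
\[
\mathcal H_B(Tf,g;h)+\mathcal H_B(f,Tg;h)+\mathcal H_B(f,g;Th).
\]
The group with $f$ distinguished is
\[
\mathcal H_A(g,h;Tf)+\mathcal H_A(Tg,h;f)+\mathcal H_A(g,Th;f),
\]
and the group with $g$ distinguished is the analogous expression in $\mathcal H_A$.

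\textbf{Reducing the $A$-groups.} I would first replace $\mathcal H_A$ by $\mathcal H_B-\Delta$ in the third slot of each $A$-group. The $f$-group then becomes
\[
\bigl[\mathcal H_B(g,h;Tf)+\mathcal H_A(Tg,h;f)+\mathcal H_A(g,Th;f)\bigr]-\Delta(g,h;Tf).
\]
The bracket vanishes by Proposition~\ref{prop:transfer} applied to the triple $(g,h;f)$. Hence the $f$-group equals $-\Delta(g,h;Tf)$, and likewise the $g$-group equals $-\Delta(h,f;Tg)$.

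\textbf{Reducing the $B$-group.} For the $h$-group, I would write $\mathcal H_B=\mathcal H_A+\Delta$ in the first two terms. Proposition~\ref{prop:transfer} applied to $(f,g;h)$ then leaves $\Delta(Tf,g;h)+\Delta(f,Tg;h)$.

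\textbf{Using the symmetry of $\Delta$.} Altogether,
\[
X=\Delta(Tf,g;h)+\Delta(f,Tg;h)-\Delta(g,h;Tf)-\Delta(h,f;Tg).
\]
Proposition~\ref{prop:critical} and Lemma~\ref{lem:defect} show that $\Delta$ is fully symmetric; when $M=1$, $\Delta\equiv0$. Full symmetry gives $\Delta(g,h;Tf)=\Delta(Tf,g;h)$ and $\Delta(h,f;Tg)=\Delta(f,Tg;h)$, so $X=0$.

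\textbf{Main obstacle.} The hard point is justifying the symmetry of $\Delta$ for compactly supported $f,g,h$ on $\R$. Lemma~\ref{lem:defect} and Proposition~\ref{prop:critical} are phrased in the $x$-variable with smooth functions on $[-1,1]$. Under $x=\tanh t$, a function in $C_c^\infty(\R)$ pulls back to a smooth function on $(-1,1)$ that vanishes identically near $\pm1$, so it extends smoothly to $[-1,1]$. The integrals defining $J_{r,d}$ transform consistently, since $dt=dx/w$ and the weights are matched in Lemma~\ref{lem:darboux}, and no boundary terms appear. Thus the symmetry transfers directly. Everything else is bookkeeping of which slot is distinguished.
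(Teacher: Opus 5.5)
Your proposal is correct and follows essentially the same route as the paper: both arguments combine Proposition~\ref{prop:transfer}, applied once for each choice of distinguished slot, with the full symmetry of $\Delta$ from Lemma~\ref{lem:defect} and Proposition~\ref{prop:critical}, and both reduce to the residual $\Delta(Tf,g;h)+\Delta(f,Tg;h)-\Delta(g,h;Tf)-\Delta(h,f;Tg)=0$. Grouping the nine terms by distinguished argument only reorganizes the paper's step of rewriting the transfer identity as an $\mathcal H_A$-identity and summing over cyclic orderings. Your remark that $C_c^\infty(\R)$ functions pull back to smooth functions on $[-1,1]$ vanishing near $\pm1$ makes explicit a point the paper leaves implicit.
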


\begin{proof}
By inserting \(\mathcal H_B=\mathcal H_A+\Delta\) into
Proposition~\ref{prop:transfer}, we obtain
\begin{equation*}%\label{eq:HA-translation}
    \mathcal H_A(Tf,g;h)+\mathcal H_A(f,Tg;h)+\mathcal H_A(f,g;Th)
    =-\Delta(f,g;Th).
\end{equation*}
We apply this identity to the three cyclic orderings of \((f,g,h)\). Then \eqref{eq:translation} follows from the full symmetry of \(\Delta\).
\end{proof}

Using Proposition \ref{prop:translation}, we can obtain a canonical form of $\mathcal{C}$.
\begin{corollary}\label{lem:canonical}
After integrations by parts, the trilinear form \(\mathcal C\) has a unique representation
\begin{equation}\label{eq:canonical}
	\mathcal C(f,g,h)
	=\int_{\R}h(t)\sum_{p,q}c_{p,q}
	T^pf(t)\,T^qg(t)\,dt
\end{equation}
	with constants \(c_{p,q}\) for compactly supported smooth \(f,g,h\).
\end{corollary}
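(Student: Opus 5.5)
Each $J_{r,d}$ is already a finite sum of integrals $\int_{\R} (\text{derivative of } f)(\text{derivative of } g)(\text{derivative of } h)\,w^{m}\,dt$. The plan is to first bring everything into the $t$-variable with only $T$ and polynomial coefficients, then integrate by parts to strip all derivatives off $h$, and finally use Proposition \ref{prop:translation} to control what the coefficients can look like and to get uniqueness.

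\textbf{Step 1 (normal form in $T$).} Since $R_a=T-2ax$, $Tx=w$ and $Tw=-2xw$, every operator $U_r$, $W_s$ and every weight $w^d$ is a polynomial in $x$ with differential operators in $T$. So each $J_{r,d}$, and hence $\mathcal C$, is a finite sum
\[
\int_{\R}\rho(x)\,T^pf\,T^qg\,T^mh\,dt,
\]
with $\rho$ a polynomial in $x=\tanh t$. We integrate by parts $m$ times to move all derivatives off $h$; there are no boundary terms by compact support. The Leibniz rule redistributes the $T$'s onto $f$, $g$ and $\rho$, and $T\rho$ is again a polynomial in $x$. This gives
\[
\mathcal C(f,g,h)=\int_{\R}h\sum_{p,q}c_{p,q}(t)\,T^pf\,T^qg\,dt,
\]
where the coefficients $c_{p,q}$ are polynomials in $\tanh t$.

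\textbf{Step 2 (constancy via translation).} Substitute this form into \eqref{eq:translation}. We rewrite $\mathcal C(f,g,Th)$ by integrating by parts once, moving $T$ from $h$ onto the product $c_{p,q}T^pfT^qg$. The terms in which $T$ falls on $f$ or on $g$ cancel exactly against $\mathcal C(Tf,g,h)+\mathcal C(f,Tg,h)$. What remains is
\[
\int_{\R} h\sum_{p,q}(Tc_{p,q})\,T^pf\,T^qg\,dt=0
\]
for all compactly supported $f,g,h$. Since $h$ is arbitrary, $\sum_{p,q}(Tc_{p,q})\,T^pf\,T^qg\equiv0$ pointwise.

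To conclude that each $Tc_{p,q}\equiv0$, fix a point $t_0$. Choose $f,g$ whose jets at $t_0$ are arbitrary; two independent jets separate the monomials $T^pf\,T^qg$. Hence $Tc_{p,q}=0$ for every $(p,q)$, and each $c_{p,q}$ is constant. This is the translation-invariance mechanism: \eqref{eq:translation} states that $\mathcal C$ commutes with the flow $t\mapsto t+\tau$.

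\textbf{Step 3 (uniqueness).} If two representations of the form \eqref{eq:canonical} agree for all compactly supported $f,g,h$, then the same pointwise jet argument (arbitrary $h$, then arbitrary jets of $f,g$ at a point) shows their coefficients coincide. If one instead wants to allow the raw form, non-symmetric in $p,q$ under $f\leftrightarrow g$, uniqueness holds as stated because $f,g$ are independent arguments.

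\textbf{Main obstacle.} Step 2 is where the content lies. One must check that the Leibniz cancellation is exact; it is, since $T$ is a derivation and the forms are integrals against $dt$, with no weight left outside after Step 1. One must also check that variable coefficients really are forced constant by the jet-separation argument. The bookkeeping in Step 1 is routine but must confirm that only finitely many $(p,q)$ occur, which holds because the total order is at most $2M+1$.
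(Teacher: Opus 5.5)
Your proposal is correct and follows essentially the same route as the paper's proof. You integrate by parts to strip all derivatives off $h$; the paper only records smooth coefficients $c_{p,q}(t)$, and your remark that they are polynomials in $\tanh t$ is a harmless refinement. You then substitute into \eqref{eq:translation} so that, after one more integration by parts, only $\int h\sum (Tc_{p,q})\,T^pf\,T^qg\,dt$ survives. Finally, the pointwise jet-separation argument at a fixed point $t_0$ gives both the uniqueness of the representation and $Tc_{p,q}=0$.
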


\begin{proof}
Using integration by parts, we move every derivative from the third argument onto the coefficient and	the first two arguments.  This gives \eqref{eq:canonical} initially with
	smooth functions \(c_{p,q}(t)\).  This representation is unique. In fact, \(h\) can be supported in an arbitrarily small neighborhood of a fixed point $t_0$, and \(f\) and \(g\) can also be prescribed arbitrarily. By taking $f$ and $g$ in the form $(t-t_0)^r\chi(t-t_0)$ for some power $r$ and a standard cutoff function $\chi$, we see that the coefficients $c_{p,q}(t)$ are uniquely determined.
	
	Now we substitute the canonical expression \eqref{eq:canonical} into the  identity \eqref{eq:translation}.	After one integration by parts, we obtain that
	\begin{align*}
	0=&\int_{\R}T\left(h(t)\sum_{p,q}c_{p,q}
	T^pf(t)\,T^qg(t)\right)\,dt-	\int_{\R}h(t)\sum_{p,q}c_{p,q}'(t)
		T^pf(t)\,T^qg(t)\,dt\\
		=&-	\int_{\R}h(t)\sum_{p,q}c_{p,q}'(t)
		T^pf(t)\,T^qg(t)\,dt.
	\end{align*}
	Thus the same argument of uniqueness of the representation gives that \(c'_{p,q}=0\) for all	\(p,q\).
\end{proof}

We then prove that all $c_{p,q}$ vanish.

\subsection{Conclusion of the proof}
In this subsection, we complete the proof of Proposition \ref{newIBP} by establishing the cyclic cancellation property of the trilinear form $\mathcal{C}$.
\begin{proposition}\label{prop:cyclic}
For every \(f,g,h\in C_c^\infty(\R)\),
\[
\mathcal C(f,g,h)=0.
\]
\end{proposition}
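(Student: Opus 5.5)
By Corollary \ref{lem:canonical}, $\mathcal C$ has the canonical form \eqref{eq:canonical} with \emph{constant} coefficients $c_{p,q}$. So it suffices to show that every $c_{p,q}$ vanishes. I will do this by exploiting the explicit $t$-dependence of the weights in $\mathcal C$, which the constancy of the canonical coefficients forbids. The cheapest route is a limiting argument as $t\to\pm\infty$.

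\textbf{Step 1: pass to a limit at infinity.} Let $f,g,h$ be supported near a point $t_0$ and let $t_0\to+\infty$. In the original definition \eqref{def J rd}, every non-top cell carries explicit powers of $w=\operatorname{sech}^2 t$. Indeed, $U_r=w^rD^r$, and rewriting $D=w^{-1}T$ shows that $U_r$ is a polynomial in $T$ with coefficients built from $w$ and $x=\tanh t$. The $w$-weights in each cell are $w^d$ together with those hidden in $U_r$ and $W_s$. I will rewrite each $J_{r,d}$ entirely in terms of $T$, $x$ and $w$, using $R_a=T-2ax$ and $W_s=V_{-s,s+1}V_{M-s+1,s}$, which are products of the $R_a$ alone. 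Then $U_r=V_{1-r,r}$ and $W_s$ are products of $R_a$'s, with no explicit $w$. Hence $J_{r,d}$ has an explicit weight $w^d$ multiplying a differential expression in $T$ with coefficients polynomial in $x$.

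\textbf{Step 2: extract the coefficients.} As $t_0\to\infty$ we have $w\to0$ exponentially and $x\to1$. The constant coefficients $c_{p,q}$ must therefore equal their limiting values. In the limit, only cells with $d=0$ survive, together with the top cell (which has $d=0$, $r=0$, $s=M$), and $R_a$ becomes $T-2a$. Since $B_{r,0}=A_{r,0}$, all three constituents $\mathcal H_B$, $\mathcal H_A$, $\mathcal H_A$ reduce to the same constant-coefficient form
\[
\mathcal H_\infty(f,g;h)=\sum_{r=0}^{M}A_{r,0}(-1)^{M-r}\int (U^\infty_r f)(U^\infty_r g)(W^\infty_{M-r}h)\,dt ,
\]
where $U_r^\infty=\prod_{i=0}^{r-1}(T-2(1-r+i))$ and similarly for $W^\infty$.

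\textbf{Step 3: verify the cyclic sum vanishes.} It remains to show $\mathcal H_\infty(f,g;h)+\mathcal H_\infty(g,h;f)+\mathcal H_\infty(h,f;g)=0$ for constant-coefficient forms. Via Fourier transform, with $T\mapsto i\xi_j$ and $\xi_1+\xi_2+\xi_3=0$, this becomes a polynomial identity in two variables. I would check it by symbol computation. Exponential weights $e^{\lambda t}$ are not compactly supported, but since all coefficients are now constants, symbols at complex frequencies are legitimate. I expect the symbol of $W^\infty_s$ to factor so that each cell symbol is a product of Pochhammer-type factors, and the cyclic sum then collapses by a Vandermonde-type identity.

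\textbf{Main obstacle.} This last polynomial identity is where I expect the difficulty. If it resists a direct attack, I would use an alternative: exploit the $t\to-\infty$ limit together with the reflection symmetry $x\mapsto -x$. Alternatively, evaluate $\mathcal C$ on test functions that are polynomials in $x$ (so finite in $(-1,1)$ coordinates), where \eqref{eq hat GM} and Lemma \ref{lem:darboux} give closed forms, and show $c_{p,q}=0$ by comparing the leading growth in $t$. Either route reduces the problem to checking that a constant-coefficient cubic symbol vanishes on the plane $\xi_1+\xi_2+\xi_3=0$.
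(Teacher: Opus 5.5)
Your reduction in Steps 1--2 is sound and is essentially the same reduction the paper makes. Because the canonical coefficients are constant, they agree with those of the $t\to+\infty$ limit. In that limit only the $d=0$ cells survive, $R_a\to T-2a$, and $\mathcal H_B$, $\mathcal H_A$ coincide because $B_{r,0}=A_{r,0}=\binom{M}{r}$.

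The paper extracts this limit differently: it uses truncated exponentials $e^{2at}\chi_R, e^{2bt}\chi_R, e^{2ct}\chi_R$ on $[1,R+1]$ and compares coefficients of $R$. Your localization $t_0\to\infty$ also works, once you note two things. First, canonicalizing the definition produces coefficients that are polynomials in $x,w$. Second, $T$ of any such polynomial vanishes at $(x,w)=(1,0)$, so canonicalization commutes with the limit.

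On $e^{2at},e^{2bt},e^{2ct}$ with $a+b+c=0$, your operators give $U_r^\infty\mapsto 2^r(a)_r$ and $W_s^\infty\mapsto 2^{2s+1}(c)_{s+1}(c-M)_s$. So everything reduces to
\[
\sum_{\mathrm{cyc}(a,b,c)}\sum_{r=0}^{M}(-1)^r\binom{M}{r}(a)_r(b)_r(c)_{M-r+1}(c-M)_{M-r}=0\qquad(a+b+c=0),
\]
which is exactly Lemma~\ref{lem:finite}.

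The genuine gap is that you never prove this identity. You say you ``expect'' a Vandermonde-type collapse, but this identity is the whole non-formal content of the proposition, and it is not a single Chu--Vandermonde evaluation.

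The missing idea has three steps. First, use
\[
(c)_{M-r+1}(c-M)_{M-r}=(-1)^{M-r}c\,(1+c)_{M-r}(1-c+r)_{M-r}.
\]
Then the inner sum at position $(a,b,c)$ equals $(-1)^M c\,F(a,b,c)$, where
\[
F(a,b,c)=\sum_{r}\binom{M}{r}(a)_r(b)_r(1+c)_{M-r}(1-c+r)_{M-r}.
\]
Second, prove that $F$ is fully symmetric on the plane $a+b+c=0$. Symmetry in $a,b$ is visible. Symmetry in $b,c$ requires writing $F$ as a coefficient extraction from $(1-t)^{-1-c}\,{}_2F_1(a,b;1-c;t)$, applying Euler's transformation (this is where $a+b+c=0$ enters), and then Pfaff's transformation. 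Third, conclude that the cyclic sum is $(-1)^M(a+b+c)F=0$.

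Your fallback routes do not avoid this step. $R_a$ anticommutes with the reflection $t\mapsto -t$, and every cell contains $2r+(2s+1)=2M+1-2d$ factors $R_a$. So comparing the $t\to-\infty$ and $t\to+\infty$ limits only shows that the symbol $P$ is odd. That gives no information already for $M=1$, where the symbol is, up to a constant, $a^3+b^3+c^3-3abc-(a+b+c)$, which is odd regardless. The polynomial-test-function route, as you note yourself, ends at the same symbol identity.
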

Our strategy is to take some truncated exponential functions as test functions $f, g$, and $h$, and then compute $\mathcal C(f,g,h)$ using both \eqref{eq:canonical} and its definition \eqref{eq:cyclic-form}. We first need the following finite cyclic identity.
\begin{lemma}\label{lem:finite}
If $a+b+c=0$, then
\begin{equation}\label{eq:finite}
    \sum_{\mathrm{cyc}(a,b,c)}\sum_{r=0}^{M}
    (-1)^r\binom{M}{r}
    (a)_r(b)_r(c)_{M-r+1}(c-M)_{M-r}=0.
\end{equation}
Here the cyclic triples are $(a,b,c)$, $(b,c,a)$, and $(c,a,b)$.
\end{lemma}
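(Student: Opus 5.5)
The plan is to treat \eqref{eq:finite} as a polynomial identity in two free variables and to reduce the inner sum to a closed form whose cyclic sum telescopes. First, substitute $c=-a-b$ in the $(a,b,c)$ term. Use the reflection rule $(x)_k=(-1)^k(1-x-k)_k$ to rewrite the two $c$-Pochhammer factors:
\[
(c)_{M-r+1}=(-1)^{M-r+1}(a+b-M+r)_{M-r+1},\qquad (c-M)_{M-r}=(-1)^{M-r}(a+b+r+1)_{M-r}.
\]
The term $(a,b,c)$ of \eqref{eq:finite} then equals $-S(a,b)$, where
\[
S(a,b)=\sum_{r=0}^{M}\binom Mr (a)_r(b)_r\,(a+b+r+1)_{M-r}\,(a+b-M+r)_{M-r+1}.
\]
This $S$ is symmetric in $a,b$, and the full left side of \eqref{eq:finite} becomes $-[S(a,b)+S(b,c)+S(c,a)]$ with $a+b+c=0$.

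Next, identify $S$ with a terminating balanced ${}_3F_2$-type sum, written as a coefficient extraction $[t^M]$ of a product $(1-t)^{-\cdot}\,{}_2F_1(\cdot;t)$. This mirrors the generating function used for $E_q$ in Lemma~\ref{lem:exchange}. I would then apply Euler's transformation \eqref{eq:Euler}, exactly as in the proof of Lemma~\ref{lem:exchange}, or the Pfaff–Saalschütz summation, to evaluate $S$ explicitly. The target is a decomposition of the form
\[
S(a,b)=X(a)+X(b)+Y(a+b)
\]
with $X$ and $Y$ polynomials (for instance $Y(z)=-2X(-z)$), or more generally a form $X(c,a)-X(b,c)$. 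On the plane $a+b+c=0$, either form makes the cyclic sum telescope to $0$. The first form is natural because the only non-symmetric dependence of $S$ on the variables enters through $a+b=-c$.

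If no clean closed form appears, I would fall back on a polynomial-vanishing argument. The cyclic sum $\Sigma(a,b)$, with $c=-a-b$, is a polynomial of total degree at most $2M+1$. I would fix $b$ and show that $\Sigma$ vanishes at $2M+2$ values of $a$. At $a\in\{0,-1,\dots,-M\}$, the factors $(a)_r$ truncate the inner sums, and each truncated sum can be evaluated by Chu--Vandermonde. Further values such as $a+b\in\{0,1,\dots,M\}$ kill the factors $(c)_{M-r+1}$ in the $(a,b,c)$ term and truncate the other terms similarly. As an independent check, an induction on $M$ via a contiguous relation in $M$ (a Zeilberger-type certificate) would also do.

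The main obstacle is the middle step: finding the correct closed form, or telescoping certificate, for $S(a,b)$. The interplay of the shifts $M-r+1$ and $M-r$ makes the sum "balanced off by one". A direct Saalschütz evaluation may therefore fail, and one may first need to split $(a+b-M+r)_{M-r+1}=(a+b-M+r)\,(a+b-M+r+1)_{M-r}$ and treat two Saalschützian pieces separately. I expect it to be routine, but tedious, to verify the boundary cases $r=0$ and $r=M$ and to check that the degree count allows the interpolation argument.
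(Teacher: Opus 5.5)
Your proposal has a genuine gap. The step that carries all the content is never supplied, and the routes you sketch for it do not work.

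\textbf{Sign slip in the setup.} The two reflections contribute $(-1)^{M-r+1}(-1)^{M-r}=-1$, but the original factor $(-1)^r$ survives. So the $(a,b,c)$ term is $-\sum_r(-1)^r\binom Mr(a)_r(b)_r(a+b+r+1)_{M-r}(a+b-M+r)_{M-r+1}$. With your $S$ (no $(-1)^r$) one gets $\sum_{\mathrm{cyc}}S=-6abc\neq0$ already for $M=1$.

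\textbf{No closed form exists.} The sum is not Saalsch\"utzian. For generic $c$, the $(a,b,c)$ term equals $(-1)^Mc\,(1+c)_M(1-c)_M\,{}_3F_2(-M,a,b;1-c,-c-M;1)$, whose parameter excess is $1-c$, not $1$. So neither Pfaff--Saalsch\"utz nor a split into two pieces yields a product formula, and none exists. The top-degree part of the correctly signed $S$ is $(a+b)(a^2+ab+b^2)^M$, and $a^2+ab+b^2$ is irreducible over $\mathbb{R}$.

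\textbf{Your ansatz is false.} The same computation refutes $S=X(a)+X(b)+Y(a+b)$. For $M=2$ the quintic part $a^5+3a^4b+5a^3b^2+5a^2b^3+3ab^4+b^5$ is not a combination of $a^5+b^5$ and $(a+b)^5$. The weaker form $X(c,a)-X(b,c)$ exists exactly when the lemma holds, so invoking it without constructing $X$ is circular.

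\textbf{The interpolation fallback only restates the problem.} By cyclic invariance of $\Sigma$, the nodes $a+b\in\{0,\dots,M\}$ are equivalent to the nodes $a\in\{0,\dots,-M\}$. Everything therefore reduces to $\Sigma(-j,b,j-b)\equiv0$ for $0\le j\le M$. For $j\ge1$ the truncated sums are again non-balanced ${}_3F_2$'s in the free variable $b$, to which Chu--Vandermonde does not apply; only $j=0$ is immediate.

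\textbf{The missing idea.} Exploit the factor $c$ hidden at the end of $(c)_{M-r+1}$. Writing $(c)_{M-r+1}=c(1+c)_{M-r}$ and $(c-M)_{M-r}=(-1)^{M-r}(1-c+r)_{M-r}$, the $(a,b,c)$ term becomes $(-1)^Mc\,F(a,b,c)$ with
\[
F(a,b,c)=\sum_{r=0}^{M}\binom Mr(a)_r(b)_r(1+c)_{M-r}(1-c+r)_{M-r}.
\]
If $F$ is invariant under all permutations on the plane $a+b+c=0$, the cyclic sum is $(-1)^M(a+b+c)F=0$. Symmetry in $a,b$ is visible. The paper proves symmetry in $b,c$ in three steps:
\begin{enumerate}
\item Write $F=M!(1-c)_M[t^M](1-t)^{-1-c}\,{}_2F_1(a,b;1-c;t)$.
\item Apply Euler's transformation. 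This is where $a+b+c=0$ enters: it turns the prefactor into $(1-t)^{-c}$ and the series into ${}_2F_1(1+a,1+b;1-c;t)$.
\item Apply Pfaff's transformation, which gives $F=\sum_r(-1)^r\binom Mr(a)_r(a+1)_r(1-b+r)_{M-r}(1-c+r)_{M-r}$, manifestly symmetric in $b,c$.
\end{enumerate}
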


\begin{proof}
Define
\begin{equation}\label{eq:Ffirst}
    F(a,b,c)=\sum_{r=0}^{M}\binom{M}{r}
    (a)_r(b)_r(1+c)_{M-r}(1-c+r)_{M-r}.
\end{equation}
This expression is symmetric in $a,b$.  We show that it is also symmetric
in $b,c$.

Write
\[
{}_2F_1\!\left(\begin{matrix}A,B\\C\end{matrix};t\right)
=\sum_{n=0}^{\infty}\frac{(A)_n(B)_n}{(C)_n n!}t^n.
\]
Assume temporarily that the denominator parameters do not vanish.
Coefficient extraction gives
\begin{align*}
    F(a,b,c)
    &=M!(1-c)_M[t^M](1-t)^{-1-c}
    {}_2F_1\!\left(\begin{matrix}a,b\\1-c\end{matrix};t\right)
    \notag\\
    &=M!(1-c)_M[t^M](1-t)^{-c}
    {}_2F_1\!\left(\begin{matrix}1+a,1+b\\1-c\end{matrix};t\right).
    %\label{eq:Eulerstep}
\end{align*}
The second equality follows from Euler's transformation
\[
{}_2F_1\!\left(\begin{matrix}A,B\\C\end{matrix};t\right)
=(1-t)^{C-A-B}
{}_2F_1\!\left(\begin{matrix}C-A,C-B\\C\end{matrix};t\right),
\]
using $a+b+c=0$.  Pfaff's transformation gives
\begin{equation}\label{eq:Pfaffstep}
    F(a,b,c)
    =M!(1-c)_M[t^M](1-t)^{b-1}
    {}_2F_1\!\left(
    \begin{matrix}1+a,a\\1-c\end{matrix};-\frac{t}{1-t}\right).
\end{equation}
For completeness, Pfaff's identity
\[
{}_2F_1\!\left(\begin{matrix}A,B\\C\end{matrix};t\right)
=(1-t)^{-A}
{}_2F_1\!\left(
\begin{matrix}A,C-B\\C\end{matrix};\frac{t}{t-1}\right)
\]
follows by comparing coefficients.  Indeed, the coefficient of $t^n$ on
its right-hand side is
\[
\frac{(A)_n}{n!}
\sum_{r=0}^{n}(-1)^r\binom nr\frac{(C-B)_r}{(C)_r}
=\frac{(A)_n(B)_n}{(C)_n n!},
\]
where the last equality is the finite Chu--Vandermonde identity.  One
proof of that identity is to insert, first in a region where the Beta
integral converges,
\[
\frac{(D)_r}{(C)_r}
=\frac{\Gamma(C)}{\Gamma(D)\Gamma(C-D)}
\int_0^1u^{D+r-1}(1-u)^{C-D-1}\,du
\]
and then use
$\sum_{r=0}^{n}(-1)^r\binom nr u^r=(1-u)^n$; after
clearing denominators, polynomial continuation
removes the temporary restrictions on the parameters.

Expanding \eqref{eq:Pfaffstep} now yields
\begin{equation}\label{eq:Fsecond}
    F(a,b,c)=\sum_{r=0}^{M}(-1)^r\binom{M}{r}
    (a)_r(a+1)_r
    (1-b+r)_{M-r}(1-c+r)_{M-r}.
\end{equation}
Thus $F$ is symmetric in $b,c$.  Together with the symmetry in $a,b$
visible in \eqref{eq:Ffirst}, this proves that $F$ is invariant under all
permutations of $a,b,c$.  Although the coefficient calculation was made
for generic parameters, both sides of \eqref{eq:Fsecond} are polynomials,
so the identity holds for all $a,b,c$ with $a+b+c=0$.

Finally, the left-hand side of \eqref{eq:finite} is
\[
(-1)^M\bigl[cF(a,b,c)+aF(b,c,a)+bF(c,a,b)\bigr]
=(-1)^M(a+b+c)F(a,b,c)=0.
\]
\end{proof}

The next lemma controls the trilinear form on truncated exponentials.

\begin{lemma}\label{lem:word}
Let $n\ge 1$. Let $z,\nu_1,\ldots,\nu_n\in\R$.  There are polynomials
$p_0,\ldots,p_n$ in $x$ and $w$ such that, for every smooth function $\varphi$,
\begin{equation}\label{eq:wordexpansion}
    e^{-2zt}R_{\nu_1}\cdots R_{\nu_n}
    \bigl(e^{2zt}\varphi(t)\bigr)
    =\sum_{k=0}^{n}p_k(x(t),w(t))T^k\varphi(t),
\end{equation}
and
\begin{equation}\label{eq:p0boundary}
    p_0(1,0)=2^n\prod_{j=1}^{n}(z-\nu_j).
\end{equation}
Consequently, as $t\to+\infty$,
\begin{equation}\label{eq:pureexp}
    e^{-2zt}R_{\nu_1}\cdots R_{\nu_n}(e^{2zt})
    =2^n\prod_{j=1}^{n}(z-\nu_j)+O(w(t)).
\end{equation}
\end{lemma}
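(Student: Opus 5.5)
Induction on $n$ gives both the expansion and the boundary value. Everything rests on three elementary facts in the $t$-variable: $x=\tanh t$ and $w=\operatorname{sech}^2 t$ satisfy $Tx=w$ and $Tw=-2xw$, and $T(e^{2zt})=2z e^{2zt}$. Hence for any smooth $\psi$,
\[
e^{-2zt}R_\nu\bigl(e^{2zt}\psi\bigr)=T\psi+2(z-\nu x)\psi .
\]
So conjugating $R_\nu$ by the exponential gives the operator $T+2(z-\nu x)$.

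The case $n=1$ is immediate from this formula, with $p_1=1$ and $p_0=2(z-\nu_1 x)$. Then $p_0(1,0)=2(z-\nu_1)$, as required.

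For the inductive step, write
\[
e^{-2zt}R_{\nu_1}\cdots R_{\nu_n}(e^{2zt}\varphi)=\bigl(T+2(z-\nu_1x)\bigr)\Psi,
\qquad
\Psi:=e^{-2zt}R_{\nu_2}\cdots R_{\nu_n}(e^{2zt}\varphi).
\]
By the induction hypothesis, applied to the $n-1$ indices $\nu_2,\dots,\nu_n$, we have $\Psi=\sum_{k=0}^{n-1}q_k(x,w)T^k\varphi$ with $q_0(1,0)=2^{n-1}\prod_{j\ge2}(z-\nu_j)$. Applying $T+2(z-\nu_1x)$ gives
\[
\sum_k\bigl[T(q_k)+2(z-\nu_1x)q_k\bigr]T^k\varphi+\sum_k q_kT^{k+1}\varphi .
\]

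The one point needing care is that $T$ of a polynomial in $(x,w)$ is again such a polynomial, and it vanishes at $(x,w)=(1,0)$. By the chain rule,
\[
T(q)=w\,\partial_xq-2xw\,\partial_wq ,
\]
which is divisible by $w$. Collecting coefficients gives the new polynomials $p_k$, and in particular
\[
p_0=T(q_0)+2(z-\nu_1x)q_0 .
\]
Evaluating at $(1,0)$ kills $T(q_0)$ and leaves $p_0(1,0)=2(z-\nu_1)q_0(1,0)$, which is \eqref{eq:p0boundary}.

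Finally, take $\varphi\equiv1$, so all terms with $k\ge1$ vanish and the left side of \eqref{eq:pureexp} equals $p_0(x(t),w(t))$. As $t\to+\infty$ we have $w\to0$. Moreover $1-x=w/(1+x)=O(w)$, and $x,w$ stay bounded, so the polynomial $p_0$ is Lipschitz on the relevant compact set. Therefore
\[
p_0(x,w)=p_0(1,0)+O(|x-1|+w)=p_0(1,0)+O(w),
\]
which proves \eqref{eq:pureexp}.
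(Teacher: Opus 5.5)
Your proof is correct and follows essentially the same route as the paper. Both arguments conjugate $R_\nu$ by $e^{2zt}$ to obtain $T+2(z-\nu x)$ and induct using $Tx=w$ and $Tw=-2xw$, so that $T$ of any polynomial in $(x,w)$ vanishes at $(1,0)$. Both then conclude \eqref{eq:pureexp} from $1-x=w/(1+x)$.
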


\begin{proof}
For $n=1$, the result follows from the fact that
\begin{equation}\label{R n=1}
    e^{-2zt}R_{\nu_1}(e^{2zt}F)
    =TF+2(z-\nu_1x)F.
\end{equation}
Suppose \eqref{eq:wordexpansion} holds for $n$. Then, since
$Tx=w$ and $Tw=-2xw$,	applying $R_{\nu_1}$ preserves the form	\eqref{eq:wordexpansion} with polynomial coefficients. Moreover, at $(x,w)=(1,0)$, every $T$-derivative of a polynomial in $x,w$ vanishes.  Hence by \eqref{R n=1}, the value of the new coefficient of $\varphi$ is multiplied by $2(z-\nu_1)$.
Thus \eqref{eq:p0boundary} is proved by induction.

In particular, for $\varphi=1$, the right-hand side of \eqref{eq:wordexpansion} is
$p_0(x(t),w(t))$.  Since $p_0$ is a polynomial and
\[
x-1=-\frac{w}{1+x},
\]
we have	$p_0(x(t),w(t))-p_0(1,0)=O(w(t))$ for $t$ large.  This proves	\eqref{eq:pureexp}.
\end{proof}

Applying Lemma~\ref{lem:word} to the operators $U_r$ and $W_s$, we obtain
\begin{align}
U_r(e^{2at})
&=e^{2at}\bigl(2^r(a)_r+O(w(t))\bigr),
\label{eq:Uasymp}\\
W_s(e^{2ct})
&=e^{2ct}\bigl(2^{2s+1}(c)_{s+1}(c-M)_s+O(w(t))\bigr),
\label{eq:Wasymp}
\end{align}
where the error terms are uniform for $t\geq1$.

\begin{proof}[Proof of Proposition \ref{prop:cyclic}]
We define
\[
P(X,Y)=\sum_{p,q}c_{p,q}X^pY^q,
\]
which, in view of \eqref{eq:canonical}, is the symbol of $\mathcal C$ in its first two arguments.

For $R\geq2$, choose a cutoff function $\chi_R$ such that
\begin{equation*}
    \chi_R=1\ \text{on }[1,R+1],\qquad
    \operatorname{supp}\chi_R\subset[0,R+2],
\end{equation*}
and, for every $k\geq0$, the derivatives $T^k\chi_R$ are bounded uniformly in $R$.  Moreover, every positive-order derivative of $\chi_R$ vanishes outside
\[
[0,1]\cup[R+1,R+2].
\]
For example, fix a smooth function $\rho$ such that
$\rho=0$ on $(-\infty,0]$ and $\rho=1$ on $[1,\infty)$, and set
\[
\chi_R(t)=\rho(t)\rho(R+2-t).
\]

Fix arbitrary $a,b\in\R$ and set $c=-a-b$.  Define
\begin{equation*}
    f_R(t)=e^{2at}\chi_R(t),\qquad
    g_R(t)=e^{2bt}\chi_R(t),\qquad
    h_R(t)=e^{2ct}\chi_R(t).
\end{equation*}
These three functions are compactly supported.

We first evaluate \eqref{eq:canonical}.  On $[1,R+1]$,
$\chi_R(t)=1$, so
\[
T^pf_R=(2a)^pe^{2at},\qquad
T^qg_R=(2b)^qe^{2bt},\qquad
h_R=e^{2ct}.
\]
Since $a+b+c=0$, the integral over this interval is exactly
$RP(2a,2b)$.  On the two remaining intervals
\[
[0,1]\cup[R+1,R+2],
\]
we have
\[
T^pf_R
=e^{2at}(T+2a)^p\chi_R(t),
\]
and analogous formulas for $g_R$ and $h_R$.  The derivatives
of $\chi_R$ are uniformly bounded, the total length of these intervals is
$2$, and the exponential factors multiply to
$e^{2(a+b+c)t}=1$.  Therefore
\begin{equation}\label{eq:canonicalR}
    \mathcal C(f_R,g_R,h_R)
    =RP(2a,2b)+O(1),
\end{equation}
where the $O(1)$ term is uniform in $R\geq2$.

We now evaluate the same quantity from its definition \eqref{eq:cyclic-form}.  Lemma~\ref{lem:word} and the uniform bounds for the derivatives of $\chi_R$ show that, on the two intervals where the cutoff varies, every normalized expression
\[
e^{-2zt}R_{\nu_1}\cdots R_{\nu_n}
\bigl(e^{2zt}\chi_R(t)\bigr)
\]
is bounded uniformly in $R$. As before, in every term of \eqref{eq:cyclic-form}, the three exponential factors again multiply to $1$. Since $0<w\leq1$ and these two intervals have total length $2$, their total contribution to \eqref{eq:cyclic-form} is $O(1)$, uniformly in $R$.

It remains to calculate the integrals over $[1,R+1]$.  On this
interval the cutoffs are equal to $1$, and
\eqref{eq:Uasymp}--\eqref{eq:Wasymp} apply. We first consider a term with
$d=0$ and $s=M-r$.  Its integrand has the expansion
\begin{align}
    &(-1)^s(U_rf_R)(U_rg_R)(W_sh_R)=(-1)^M2^{2M+1}(-1)^r
    (a)_r(b)_r(c)_{M-r+1}(c-M)_{M-r}+O(w(t)).
    \label{eq:d0leading}
\end{align}
Thus as $t\to\infty$, the first term on the right-hand side of \eqref{eq:d0leading} is the leading term.  Every other term in the product expansion is $O(w(t))$.

For a term with $d\geq1$, the explicit
factor $w^d$ then gives
\begin{equation*}%\label{eq:dpositive}
    (-1)^sw^d(U_rf_R)(U_rg_R)(W_sh_R)
    =O(w(t)).
\end{equation*}
Hence every $d>0$ cell contributes only an $O(w(t))$ term on
$[1,R+1]$.

The top cell is the case $r=0$ of \eqref{eq:d0leading}.  Since  $A_{r,0}=B_{r,0}=\binom{M}{r}$, the contributions from $[1,R+1]$ to
$\mathcal H_A(f_R,g_R;h_R)$ and
$\mathcal H_B(f_R,g_R;h_R)$ are both
\begin{align}
    &R(-1)^M2^{2M+1}
    \sum_{r=0}^{M}(-1)^r\binom{M}{r}
    (a)_r(b)_r(c)_{M-r+1}(c-M)_{M-r}+O\!\left(\int_1^{R+1}w(t)\,dt\right).
    \label{eq:rooted}
\end{align}
Furthermore,
\begin{equation*}%\label{eq:wint}
    \int_1^{R+1}w(t)\,dt
    \leq\int_1^\infty w(t)\,dt
    =1-\tanh 1.
\end{equation*}
Thus the remainder term in \eqref{eq:rooted} is $O(1)$ uniformly in $R$.

Adding the three cyclic terms in \eqref{eq:cyclic-form} gives
\begin{align*}
    \mathcal C(f_R,g_R,h_R)
    =R(-1)^M2^{2M+1}
    \sum_{\mathrm{cyc}(a,b,c)}\sum_{r=0}^{M}
    (-1)^r\binom{M}{r}
    (a)_r(b)_r(c)_{M-r+1}(c-M)_{M-r}
    +O(1).
    %\label{eq:directR}
\end{align*}
By Lemma~\ref{lem:finite}, the coefficient of $R$ is zero.  Hence
\begin{equation}\label{eq:directbounded}
    \mathcal C(f_R,g_R,h_R)=O(1)
\end{equation}
with a bound independent of $R$.

Comparing \eqref{eq:canonicalR} and \eqref{eq:directbounded} and letting $R\to\infty$, we obtain
\[
P(2a,2b)=0.
\]
Since $a,b\in\R$ are arbitrary, $P$ is the zero polynomial.  Thus every
$c_{p,q}$ in \eqref{eq:canonical} is zero, and
\(\mathcal C(f,g,h)=0\) for all compactly supported smooth
$f,g,h$.
\end{proof}

In the last step, we show that the previous results for compactly supported functions can also be applied to $G$.

In fact, smoothness of $u(x)$ implies that $u'(x)$ and all of its $x$-derivatives are bounded on $[-1,1]$.  Since $T=wD$ and $Tw=-2xw$, an induction argument gives that for every $m\ge0$,
\begin{equation}
	T^mG=O(w)=O(e^{-2|t|})\qquad(|t|\to\infty).\label{eq:G-pole-decay}
\end{equation}
We recall that each operator $R_a=T-2ax$, $U_r$, and $W_s$ is a finite differential operator in $T$ with bounded coefficients that are polynomials in $x,w$.  Hence the same estimate holds after applying any of these operators.

We choose a cutoff function $\chi_R\in C_c^\infty(\R)$ with $\chi_R=1$ on $[-R,R]$, $\chi_R=0$ outside $[-R-1,R+1]$, and with all $T$-derivatives bounded independently of $R$.  Then $G_R:=\chi_RG\in C_c^\infty(\R)$. We show that for every cell $J_{r,d}$, we have
\begin{equation*}
		J_{r,d}(G_R,G_R,G_R)\to J_{r,d}(G,G,G), \quad R\to \infty.
\end{equation*}

  On the two intervals $(-R-1,-R)$ and $(R,R+1)$, Leibniz' rule and the uniform derivative bounds for $\chi_R$, together with \eqref{eq:G-pole-decay}, show that every term containing any derivative of $G_R-G$ is $O(e^{-2R})$. 

By definition \eqref{def J rd}, a cell $J_{r,d}$ in the summation of $\mathcal{H}_A$ or $\mathcal{H}_B$ is a product of some differentiated factors, multiplied by a bounded coefficient and at most a fixed power of $w$. Therefore its contribution on $(-R-1,-R)$ and $(R,R+1)$ is  $O(e^{-2R})$ because the intervals have total length two. By \eqref{eq:G-pole-decay}, the remaining tails are also dominated by $Ce^{-2R}$ and tend to zero.  Thus every cell converges as $R\to\infty$.

Now we take $f=g=h=\chi_RG$ in Proposition~\ref{prop:cyclic} and let $R\to \infty$.  By
\eqref{eq:C-average} and the previous discussion, we obtain
\[
 0=\mathcal C(G,G,G)
 =3\left(J_{0,0}(G,G;G)
 +\sum_{r=1}^{M}\sum_{d=0}^{M-r}C_{r,d}J_{r,d}(G,G;G)\right).
\]
For $s=M-r-d$, equations \eqref{eq:U} and \eqref{eq:W-A}, together with the facts that $dt=dx/w$ and $A_sG=\widehat G_s$, give
\[
 J_{r,d}(G,G;G)
 =\int_{-1}^{1}w^{M+r}\bigl(G^{(r)}\bigr)^2
 \widehat G_{M-r-d}\,dx.
\]
For the top cell, we also have
\[
 J_{0,0}(G,G;G)
 =(-1)^M\int_{-1}^{1}w^MG^2D^{2M+1}(w^MG)\,dx=-I_N.
\]
Recall that $M=\frac{N-2}{2}$. Then substitution of these two formulas completes the proof of Proposition~\ref{newIBP}.

\section{The coefficient bound for $R_{N,k}$}\label{RNkbound}

In this appendix, we prove the bounds for $R_{N,k}$ needed in Subsection \ref{subsec proof main}, which will be used in the proof of Theorem \ref{main}. Recall that 
\begin{equation*}
\begin{aligned}
    R_{N,k}=&N!+N!\sum_{r=1}^{M}\sum_{d=0}^{M-r}\frac{(M-r-d+1)!}{(M+r+d+1)!}C_{r,d}\prod_{i=0}^{r-1}(\lambda_k-\lambda_i ).\\
    =&N!+N!\sum_{r=1}^{M}\frac{M!}{3r!(r-1)!(M+r)!}\sum_{d=0}^{M-r}\frac{(r+d-1)!(M-r-d+1)[3(M+r)+d]}{d!(M+r+d)(M+r+d+1)}\prod_{i=0}^{r-1}(\lambda_k-\lambda_i ).    
\end{aligned}
\end{equation*}
Then we have the following estimate on $R_{N,k}$.
\begin{proposition}\label{RNkest}
For every $k\geq 2$, we have
\begin{equation*}
    \frac{\Gamma(k+1)}{\Gamma(k+N-1)}R_{N,k}\leq c_N:=\frac{2(N^3+N^2+5N-4)}{9N}.  
\end{equation*}
\end{proposition}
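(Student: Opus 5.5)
The plan is to reduce the bound to a statement about a single explicit rational function of $k$ and then check it directly. First I would rewrite the products in terms of $k$. Since $\lambda_i=i(i+N-1)$, we have $\lambda_k-\lambda_i=(k-i)(k+i+N-1)$. Hence
\[
\prod_{i=0}^{r-1}(\lambda_k-\lambda_i)=\frac{k!}{(k-r)!}\cdot\frac{(k+N+r-2)!}{(k+N-2)!}.
\]
This vanishes for $r>k$ and is a polynomial in $k$ of degree $2r$. Multiplying by $\Gamma(k+1)/\Gamma(k+N-1)$, which is $1/\prod_{l=1}^{N-2}(k+l)$, turns each term into a ratio of Pochhammer-type products. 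The $r=M$ term is the largest: its numerator has degree $2M=N-2$ and the denominator has degree $N-2$, so it tends to a constant as $k\to\infty$. All other terms decay.

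Accordingly, the key steps are:

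\textbf{Step 1.} Compute the limit $\lim_{k\to\infty}\frac{\Gamma(k+1)}{\Gamma(k+N-1)}R_{N,k}$. Only $r=M$ contributes, and then $d=0$ is forced, with coefficient
\[
N!\frac{1!}{(2M+1)!}C_{M,0}=N!\frac{1}{(N-1)!}=N.
\]
So the limit is $N$, and I expect $c_N\ge N$ to have room to spare. Indeed $c_N\sim \tfrac{2}{9}N^2$, which exceeds $N$ for $N\ge 4$.

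\textbf{Step 2.} Bound each term $T_{r,d}(k)$ separately as a function of $k\ge 2$. I would show that each normalized term
\[
\frac{k!}{(k-r)!}\frac{(k+N+r-2)!}{(k+N-2)!}\frac{k!}{(k+N-2)!}
\]
is maximized at a small value of $k$, or is monotone. For instance, the ratio $T(k+1)/T(k)$ is an explicit rational function, and I can compare it with $1$. This gives $\frac{\Gamma(k+1)}{\Gamma(k+N-1)}R_{N,k}\le \max$ over a few small $k$ plus a tail estimate. I expect the maximum of the full sum to occur at $k=2$, since for $k=2$ only $r\le 2$ survive.

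\textbf{Step 3.} Evaluate the case $k=2$ exactly. Only $r=1,2$ contribute, with $\lambda_2=2(N+1)$ and $\lambda_2(\lambda_2-N)$. Summing over $d$ with the explicit $C_{r,d}$ should produce exactly $c_N=\frac{2(N^3+N^2+5N-4)}{9N}$, which is presumably how $c_N$ was chosen. This reduces to closed-form sums in $d$ of rational functions, computable via telescoping of $\frac{(r+d-1)!}{d!(M+r+d)(M+r+d+1)}$-type terms.

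The main obstacle is Step 2: proving that $k\mapsto \frac{\Gamma(k+1)}{\Gamma(k+N-1)}R_{N,k}$ is decreasing for $k\ge 2$ uniformly in $N$. This is not term-by-term true, since the $r=M$ term increases toward its limit $N$ while the low-$r$ terms decay. I would first try a crude split. The terms with $r\ge 2$ are bounded for all $k$ by their $k\to\infty$ suprema, where each piece is at most its limit, using
\[
\frac{(k+N+r-2)!\,k!}{(k-r)!\,(k+N-2)!^2}\le \frac{k^{r}(k+N)^r}{(k+1)\cdots(k+N-2)},
\]
which is bounded. Those bounds are then compared to the exact value at $k=2$. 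If this crude split loses too much, I would fall back on writing the normalized quantity as $c_N$ minus a sum of manifestly nonnegative rational functions in $k$, expanding in the basis $\binom{k-2}{j}$ and checking the signs of the coefficients.
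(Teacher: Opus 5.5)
Your Step 1 is correct. However, there is a genuine gap at Step 2, which is where all the difficulty lies, and the route you sketch for it fails.

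Take $N=6$. Then
\[
\frac{\Gamma(k+1)}{\Gamma(k+5)}R_{6,k}=\frac{720+160\lambda_k+6\lambda_k(\lambda_k-6)}{(k+1)(k+2)(k+3)(k+4)}.
\]
The $r\le 1$ part is decreasing and equals $2960/360\approx 8.22$ at $k=2$. The $r=2$ part increases to its limit $6$. Bounding the pieces separately therefore gives about $14.2$, well above $c_6=278/27\approx 10.30$.

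For large $N$ it only gets worse. For fixed $r$, the $r$-th piece has supremum in $k$ of the size of its $k$-independent coefficient, which is of order $N^2/(r+1)$. So the sum of suprema eventually exceeds $c_N\sim\frac29N^2$.

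Your fallback, checking signs of coefficients in the basis $\binom{k-2}{j}$, is a computation for each fixed $N$. It gives nothing uniform over all even $N$.

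Your expectation in Step 3 is also false: $c_N$ is not the value at $k=2$. For $N=6$ that value is $3632/360=454/45\approx 10.09<c_6$, and for every $N\ge 6$ it is strictly smaller than $c_N$. Only for $N=4$ is $c_N$ attained at $k=2$.

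The missing idea is to treat the normalized quantity as a weighted average rather than as a sum of separately bounded terms. Write it as $\sum_{r=0}^{\min\{k,M\}}\pi_{k,r}c_{N,r}$, with $k$-independent constants $c_{N,r}$ and nonnegative weights
\[
\pi_{k,r}=\frac{(N-2)!\,(M!)^2\,\Gamma(k+1)}{(M-r)!\,(M+r)!\,(r!)^2\,\Gamma(k+N-1)}\prod_{i=0}^{r-1}(\lambda_k-\lambda_i).
\]
The Pfaff--Saalsch\"utz identity gives $\sum_r\pi_{k,r}=1$. Hence the growth of the high-$r$ weights is exactly paid for by the decay of $\pi_{k,0}$ and $\pi_{k,1}$.

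Three facts then suffice:
\begin{itemize}
\item $c_{N,0}=N(N-1)$ and $c_{N,1}=\frac{N(N-1)}{3}$;
\item $c_{N,r}\le\frac{2N(N-1)}{9}$ for $r\ge 2$, via convexity in $v=r+d$;
\item $\pi_{k,0}$ and $\pi_{k,1}$ decrease in $k$.
\end{itemize}
Together they give, for $N\ge 6$,
\[
\sum_r\pi_{k,r}c_{N,r}\le\frac{2N(N-1)}{9}+\frac{7N(N-1)}{9}\pi_{2,0}+\frac{N(N-1)}{9}\pi_{2,1}=c_N .
\]
This is where $c_N$ comes from. The case $N=4$ is the direct check $4+\frac{16}{\lambda_k+2}\le\frac{16}{3}$.
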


To prove this estimate, we need several technical lemmas.

\begin{lemma}\label{pikrest}
For any even $N\geq 6$, $k\geq 2$ and $0\leq r\leq \min\{k,M\}$, let
\begin{equation*}
    \pi_{k,0}:=  \frac{(N-2)!\Gamma(k+1)}{\Gamma(k+N-1)}  
\end{equation*}
and
\begin{equation*}
    \pi_{k,r}:=\frac{(N-2)!(M!)^2\Gamma(k+1)}{(M-r)!(M+r)!(r!)^2\Gamma(k+N-1)}\prod_{i=0}^{r-1}(\lambda_k-\lambda_i )
\end{equation*}
for $r\geq 1$.
Then we have
\begin{equation*}
    \pi_{k,0}\leq \pi_{2,0}=\frac{2}{N(N-1)}     
\end{equation*}
and
\begin{equation*}
    \pi_{k,1}\leq \pi_{2,1}=\frac{4(N+1)(N-2)}{N^2(N-1)}.     
\end{equation*}
\end{lemma}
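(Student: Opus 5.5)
The plan is to show that both $\pi_{k,0}$ and $\pi_{k,1}$ are nonincreasing in $k$ for $k\ge 2$, using ratios of consecutive terms. Equality at $k=2$ then follows by direct evaluation.

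For $\pi_{k,0}=(N-2)!\,\Gamma(k+1)/\Gamma(k+N-1)$ the ratio of consecutive terms is
\[
\frac{\pi_{k+1,0}}{\pi_{k,0}}=\frac{k+1}{k+N-1},
\]
which is less than $1$ because $N\ge 6>2$. So $\pi_{k,0}$ is decreasing, and
\[
\pi_{2,0}=\frac{2(N-2)!}{(N)!/1}\cdot 1=\frac{2\,(N-2)!}{N!}\cdot 1 .
\]
Since $\Gamma(N+1)=N!$, this gives $\pi_{2,0}=2/(N(N-1))$, as claimed.

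For $r=1$ we have $\prod_{i=0}^{0}(\lambda_k-\lambda_i)=\lambda_k=k(k+N-1)$, since $\lambda_0=0$. The prefactor is
\[
\frac{(N-2)!\,(M!)^2}{(M-1)!\,(M+1)!}=\frac{(N-2)!\,M}{M+1}.
\]
Using $k\,\Gamma(k+1)=k\cdot k!$ and $(k+N-1)/\Gamma(k+N-1)=1/\Gamma(k+N-2)\cdot\frac{k+N-1}{k+N-2}$, I will write
\[
\pi_{k,1}=\frac{(N-2)!\,M}{M+1}\cdot\frac{k\,\Gamma(k+1)\,(k+N-1)}{\Gamma(k+N-1)}.
\]
The ratio of consecutive terms is
\[
\frac{\pi_{k+1,1}}{\pi_{k,1}}=\frac{(k+1)^2\,(k+N)}{k\,(k+N-1)^2}.
\]
This is at most $1$ exactly when $(k+1)^2(k+N)\le k(k+N-1)^2$. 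Expanding both sides, the cubic terms cancel. The inequality reduces to
\[
k^2(2N-4)+k(N^2-4N+1)-N\ge 0 .
\]
For $k\ge 2$ and $N\ge 6$ this clearly holds: the first term alone is at least $4(2N-4)\ge 32>N$ once $N$ is moderate, and the middle term is positive for $N\ge 4$. Hence $\pi_{k,1}$ is nonincreasing in $k$.

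Finally I evaluate at $k=2$, where $\lambda_2=2(N+1)$ and $\Gamma(3)/\Gamma(N+1)=2/N!$:
\[
\pi_{2,1}=\frac{(N-2)!\,M}{M+1}\cdot\frac{2\cdot 2(N+1)}{N!}=\frac{M}{M+1}\cdot\frac{4(N+1)}{N(N-1)} .
\]
With $M/(M+1)=(N-2)/N$ this equals $4(N+1)(N-2)/(N^2(N-1))$, as stated. The only genuine step is the sign check of the quadratic in $k$, and it is routine. I expect no obstacle beyond careful bookkeeping of the Gamma factors.
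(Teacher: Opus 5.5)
Your approach is the same as the paper's: you show $\pi_{k,0}$ and $\pi_{k,1}$ are nonincreasing in $k$ via consecutive ratios. Your ratio $\frac{(k+1)^2(k+N)}{k(k+N-1)^2}$ and your values of $\pi_{2,0}$ and $\pi_{2,1}$ agree with the paper.

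However, the expansion in what you call the only genuine step is wrong. We have $(k+1)^2(k+N)=k^3+(N+2)k^2+(2N+1)k+N$ and $k(k+N-1)^2=k^3+2(N-1)k^2+(N-1)^2k$. So the ratio is at most $1$ if and only if
\[
(N-4)k^2+N(N-4)k-N=(N-4)k(k+N)-N\ge 0,
\]
not $k^2(2N-4)+k(N^2-4N+1)-N\ge 0$.

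This is not cosmetic. Your polynomial is also positive for $N=4$, yet for $N=4$ the true ratio at $k=2$ is $54/50>1$; indeed $\pi_{2,1}=5/6<9/10=\pi_{3,1}$. Your version therefore cannot show where the hypothesis $N\ge 6$ is used.

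With the correct polynomial the conclusion is immediate: for $N\ge 6$ and $k\ge 2$,
\[
(N-4)k(k+N)\ge 2\cdot 2\cdot (N+2)>N.
\]
After this correction your argument is complete and coincides with the paper's.
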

\begin{proof}
The upper bound of $\pi_{k,0}$ follows because it is decreasing in $k$.

For $r=1$, we have
\begin{equation*}
    \frac{\pi_{k+1,1}}{\pi_{k,1}}=\frac{(k+1)^2(k+N)}{k(k+N-1)^2}<1
\end{equation*}
for $N \geq 6$, which yields the desired result.
\end{proof}

\begin{lemma}\label{cNrest}
Let $c_{N,0}:=\frac{N!}{(N-2)!}$ and
\begin{equation*}
    c_{N,r}:=\frac{N!(M-r)!(M+r)!(r!)^2}{(N-2)!(M!)^2}\sum_{d=0}^{M-r}\frac{(M-r-d+1)!}{(M+r+d+1)!}C_{r,d}. 
\end{equation*}
Then, we have for any $2\leq r\leq M$,
\begin{equation*}
    c_{N,r}\leq \frac{2N(N-1)}{9}. 
\end{equation*}

\end{lemma}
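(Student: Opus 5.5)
First I substitute the explicit form of $C_{r,d}$ and simplify $c_{N,r}$ to a finite sum of rational terms. Write $s=M-r-d$. Then
\[
C_{r,d}=\frac{M!}{r!\,d!\,s!}\,(r)_d(M+r)_d\,\frac{3(M+r)+d}{3(M+r)},
\]
with $(r)_d=\frac{(r+d-1)!}{(r-1)!}$ and $(M+r)_d=\frac{(M+r+d-1)!}{(M+r-1)!}$. Multiplying by $\frac{(s+1)!}{(M+r+d+1)!}$ gives
\begin{equation*}
c_{N,r}=\frac{N(N-1)\,(M-r)!\,r}{3M!}\sum_{d=0}^{M-r}\frac{(r+d-1)!\,(s+1)\,[3(M+r)+d]}{d!\,(M+r+d)(M+r+d+1)} .
\end{equation*}
This uses $(M+r)!/(M+r-1)!=M+r$, which cancels the factor $3(M+r)$ in the denominator up to a constant. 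The claim is therefore equivalent to
\begin{equation*}
\Sigma_r:=\frac{r\,(M-r)!}{M!}\sum_{d=0}^{M-r}\binom{r+d-1}{d}(r-1)!\,\frac{(M-r-d+1)\,[3(M+r)+d]}{(M+r+d)(M+r+d+1)}\le \frac{2}{3}.
\end{equation*}

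Next I bound $\Sigma_r$ termwise. Since $d\le M-r$, we have $M+r+d\ge M+r$, and
\[
\frac{3(M+r)+d}{(M+r+d)(M+r+d+1)}\le \frac{3}{M+r+d+1}+\frac{1}{M+r+d+1}\cdot\frac{d}{M+r+d}\le\frac{4}{M+r+d+1}.
\]
The cruder bound $\frac{3(M+r)+d}{(M+r+d)(M+r+d+1)}\le \frac{3}{M+r+1}$ also holds, because $d\le M+r$. What remains is the combinatorial sum
\[
\frac{r!\,(M-r)!}{M!}\sum_{d=0}^{M-r}\binom{r+d-1}{d}(M-r-d+1).
\]
The hockey-stick identity (applied twice) evaluates it as
\[
\sum_{d}\binom{r+d-1}{r-1}(M-r+1-d)=\binom{M+1}{r+1}.
\]
Hence
\[
\Sigma_r\le \frac{3}{M+r+1}\cdot\frac{r!\,(M-r)!}{M!}\binom{M+1}{r+1}=\frac{3(M+1)}{(r+1)(M+r+1)}.
\]
For $r\ge2$ this is at most $\frac{M+1}{M+3}$. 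That is less than $1$ but not below $\frac23$, so the crude bound is insufficient.

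The main obstacle is therefore sharpening this estimate. The large-$d$ terms carry the factor $\frac{1}{M+r+d+1}$, which is much smaller than $\frac{1}{M+r+1}$, and the binomial weights concentrate at large $d$ when $r\ge2$. So I will keep the exact weight $\frac{1}{M+r+d+1}$ and use
\[
\frac{1}{M+r+d+1}=\int_0^1 t^{M+r+d}\,dt
\]
to write the sum as a Beta-type integral. The numerator $3(M+r)+d$ can be handled as $3(M+r+d)-2d$, which splits the sum into two pieces. Each piece becomes a generating-function expression $\sum_d\binom{r+d-1}{d}t^d(\cdots)$, which is a derivative of $(1-t)^{-r}$ truncated at $M-r$. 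Where the closed form is unwieldy, I will instead compare $\Sigma_r$ with $\Sigma_{r+1}$ or $\Sigma_{r-1}$ to show monotonicity in $r$, and show $\Sigma_r$ decreasing in $M$. That reduces the claim to checking $r=2$ (and small $M$, e.g. $M=2,3$) by hand, where for instance $M=2,r=2$ gives $\Sigma=\frac{2\cdot 1\cdot 12}{2\cdot4\cdot5}=\frac35\le\frac23$.
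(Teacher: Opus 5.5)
Your reduction is correct. With $v=r+d$ and $\varphi(v)=\frac{3M+2r+v}{(M+v)(M+v+1)}$ you get $c_{N,r}=\frac{N(N-1)}{3}\Sigma_r$, where
\[
\Sigma_r=\sum_{v=r}^{M}\frac{\binom{v-1}{r-1}}{\binom{M}{r}}(M-v+1)\varphi(v),
\]
so the claim is $\Sigma_r\le\frac23$. Your hockey-stick evaluation $\sum_v\binom{v-1}{r-1}(M-v+1)=\binom{M+1}{r+1}$ is right. You also correctly see that $\varphi(v)\le\frac{3}{M+r+1}$ only gives $\frac{M+1}{M+3}$ at $r=2$.

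\textbf{The gap.} After that point nothing is proved. The truncated generating functions are never evaluated. The monotonicity of $\Sigma_r$ in $M$ is only asserted, yet it carries the whole lemma: since $\Sigma_r=\frac{3}{2r+1}\le\frac35$ when $M=r$, monotonicity in $M$ alone would finish the proof. Your small-case checks at $M=2,3$ mean nothing without it.

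There is also a quantitative warning for the route you sketch. Write
\[
\varphi(v)=\frac{3}{M+v+1}-\frac{2(v-r)}{(M+v)(M+v+1)}.
\]
Suppose you keep the first piece exactly and discard the second. For $r=2$ the resulting bound tends to $3\int_0^1\frac{2x(1-x)}{1+x}\,dx=3(3-4\ln 2)\approx 0.682>\frac23$ as $M\to\infty$. The true limit is about $0.545$. So the negative correction must be tracked precisely, not estimated away.

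\textbf{The missing idea} (this is how the paper proceeds) is convexity. The function $\varphi$ is convex in $v$, so on $[r,M]$ it lies below its chord. The endpoint values are $\varphi(r)=\frac{3}{M+r+1}$ and $\varphi(M)=\frac{2M+r}{M(2M+1)}\approx\frac1M$; the small value at $v=M$ is exactly where the $-2(v-r)$ correction gets captured.

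One more hockey-stick identity gives the weighted first moment
\[
\sum_v\frac{\binom{v-1}{r-1}}{\binom{M}{r}}(M-v+1)\frac{v-r}{M-r}=\frac{r(M+1)}{(r+1)(r+2)}.
\]
Combining this with the chord bound yields, for $2\le r\le M-1$,
\[
\Sigma_r\le\frac{M+1}{(r+1)(r+2)}\Bigl[\frac{6}{M+r+1}+\frac{r(2M+r)}{M(2M+1)}\Bigr]\le\frac{2}{r+1}\le\frac23 .
\]
The endpoint case $r=M$, where the chord degenerates, is computed directly: $\Sigma_M=\frac{3}{2M+1}\le\frac35$.
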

\begin{proof}
Recall that 
\begin{equation*}
    \begin{aligned}
        C_{r,d}&=\binom{M}{r}\binom{M-r}{d}(r)_d(M+r)_d\frac{3(M+r)+d}{3(M+r)}\\
        &=\frac{M!(r+d-1)!(M+r+d-1)!}{r!d!(M-r-d)!(r-1)!(M+r-1)!}\frac{3(M+r)+d}{3(M+r)}.
    \end{aligned}
\end{equation*}
Then we have
\begin{equation*}
    c_{N,r}=\frac{N(N-1)}{3}\sum_{d=0}^{M-r}\frac{\binom{r+d-1}{r-1}}{\binom{M}{r}}\frac{(M-r-d+1)(3M+3r+d)}{(M+r+d)(M+r+d+1)}.    
\end{equation*}

If $2\leq r=M=\frac{N}{2}-1$, then we have $N\geq 6$ and $c_{N,M}=N\leq \frac{2N(N-1)}{9}$. In the following, we will assume $2\leq r\leq M-1$.

Set $v=r+d$. We have
\begin{equation*}
    \frac{(M-r-d+1)(3M+3r+d)}{(M+r+d)(M+r+d+1)}=(M-v+1)\frac{(3M+2r+v)}{(M+v)(M+v+1)}=:(M-v+1)\varphi(v).   
\end{equation*}

Since
\begin{equation*}
    \begin{aligned}
        &\binom{v-1}{r-1}
        =\binom{v}{r}-\binom{v-1}{r}, \\   
        &v\binom{v-1}{r-1}=r\binom{v}{r}, \\   
        &d\binom{v-1}{r-1}=r\binom{v-1}{r},
    \end{aligned}    
\end{equation*}
we have
\begin{equation}\label{combsum}
    \begin{aligned}
        &\sum_{d=0}^{M-r}\frac{\binom{v-1}{r-1}}{\binom{M}{r}}=1,\\
        &\sum_{d=0}^{M-r}\frac{\binom{v-1}{r-1}}{\binom{M}{r}}(M-v+1)=\frac{M+1}{r+1},\\
        &\sum_{d=0}^{M-r}\frac{\binom{v-1}{r-1}}{\binom{M}{r}}(M-v+1)\frac{v-r}{M-r}=\frac{r(M+1)}{(r+1)(r+2)}.
    \end{aligned}
\end{equation}

Since $\varphi$ is a convex function on $[r,M]$, we have
\begin{equation*}
    \varphi(v)\leq \frac{M-v}{M-r}\varphi(r)+\frac{v-r}{M-r}\varphi(M).
\end{equation*}

Hence, combining with \eqref{combsum} and further computations, we have
\begin{equation*}
    \begin{aligned}
        \sum_{v=r}^{M}\frac{\binom{v-1}{r-1}}{\binom{M}{r}}(M-v+1)\varphi(v)
        &\leq  \frac{M+1}{r+1}\left(\frac{2}{r+2}\varphi(r)+\frac{r}{r+2}\varphi(M)\right)\\
        &=\frac{M+1}{(r+1)(r+2)}\left[\frac{6}{M+r+1}+\frac{r(2M+r)}{M(2M+1)}\right]\\
        &\leq  \frac{2}{r+1}.
    \end{aligned}
\end{equation*}

Consequently, we have
\begin{equation*}
    c_{N,r}=\frac{N(N-1)}{3}\sum_{v=r}^{M}\frac{\binom{v-1}{r-1}}{\binom{M}{r}}(M-v+1)\varphi(v)\leq \frac{2N(N-1)}{3(r+1)}\leq \frac{2N(N-1)}{9}
\end{equation*}
for any $2\leq r\leq M$.

\end{proof}

Finally, we can prove Proposition \ref{RNkest}.
\begin{proof}[Proof of Proposition \ref{RNkest}]
For $N=4$, we have
\begin{equation*}
    \frac{\Gamma(k+1)}{\Gamma(k+3)}R_{4,k}=4+\frac{16}{\lambda_k+2}\leq \frac{16}{3}=c_4
\end{equation*}
since $\lambda_k\geq \lambda_2=10$.

For $N\geq 6$, we write
\begin{equation*}
    \frac{\Gamma(k+1)}{\Gamma(k+N-1)}R_{N,k}=\sum_{r=0}^{\min\{k,M\}}\pi_{k,r}c_{N,r},
\end{equation*}
where $\pi_{k,r}$ and $c_{N,r}$ are defined in Lemma \ref{pikrest} and Lemma \ref{cNrest} respectively.

Note that
\begin{equation}\label{eq:pi-hyper}
    \pi_{k,r}=\frac{(-M)_r(-k)_r(k+2M+1)_r}{(M+1)_r(1)_r r!}\pi_{k,0}.
\end{equation}
We recall the Pfaff--Saalsch\"utz identity
\begin{equation}
 {}_3F_2\!\left(\begin{matrix}a,b,-n\\c,1+a+b-c-n\end{matrix};1\right)
 =\frac{(c-a)_n(c-b)_n}{(c)_n(c-a-b)_n},\qquad n\in \mathbb{N}.\label{eq:PS}
\end{equation}
Take
\[
 a=-M,\qquad b=k+2M+1,\qquad n=k,\qquad c=M+1.
\]
Then $1+a+b-c-n=1$, exactly matching \eqref{eq:pi-hyper}, and \eqref{eq:PS} gives
\begin{align*}
 \sum_{r=0}^{\min\{k,M\}}
 \frac{(-M)_r(-k)_r(k+2M+1)_r}{(M+1)_r(1)_r\,r!}
 &=\frac{(2M+1)_k(-M-k)_k}{(M+1)_k(-k)_k}\\
 &=\frac{(k+2M)!}{(2M)!k!}.
\end{align*}
Since
\[
 \pi_{k,0}=\frac{(2M)!k!}{(k+2M)!},
\]
we have proved the normalization
\begin{equation*}
 \sum_{r=0}^{\min\{k,M\}}\pi_{k,r}=1.
\end{equation*}

By Lemma \ref{cNrest} and Lemma \ref{pikrest}, we have
\begin{equation*}
    \begin{aligned}   
        \sum_{r=0}^{\min\{k,M\}}\pi_{k,r}c_{N,r}
        &\leq  \frac{2N(N-1)}{9}+\pi_{k,0}(N(N-1)-\frac{2N(N-1)}{9})+\pi_{k,1}(\frac{N(N-1)}{3}-\frac{2N(N-1)}{9})\\
        &\leq \frac{2(N^3+N^2+5N-4)}{9N}=c_N
    \end{aligned}     
\end{equation*}
for $N \geq 6$ as desired.
\end{proof}

\section*{Acknowledgements}
 The research of  C. Gui is supported by NSFC Key Program (Grant No.12531010), University of Macau research grants CPG2024-00016-FST, CPG2025-00032-FST, CPG2026-00027-FST, SRG2023-00011-FST, MYRGGRG2023-00139-FST-UMDF, UMDF Professorial Fellowship
of Mathematics, Macao SAR FDCT 0003/2023/RIA1 and Macao SAR FDCT 0024/2023/RIB1. The research of J. Wei is partially supported by General Research Grant of HKSAR GRF 14309824. The authors acknowledge the use of AI tools. All mathematical arguments and proofs in the final manuscript were checked and written by the authors.

\end{document}